\definecolor{indigo}{rgb}{0.29, 0.0, 0.51}  
\theoremstyle{plain}
\newtheorem{theorem}{Theorem}
\newtheorem{lemma}[theorem]{Lemma}
\theoremstyle{definition}
\theoremstyle{remark}
\newtheorem{remark}[theorem]{Remark}
\numberwithin{theorem}{section}
\newcommand*\bigcdot{\mathpalette\bigcdot@{0.6}}
\newcommand*\bigcdot@[2]{\mathbin{\vcenter{\hbox{\scalebox{#2}{$\m@th#1\bullet$}}}}}
\DeclareMathOperator\tb{tb}  
\DeclareMathOperator\tbr{\tb_{\mathbb{Q}}}
\DeclareMathOperator\rot{rot}  
\DeclareMathOperator{\rotr}{\rot_{\mathbb{Q}}}
\begin{document}

\title{Non-loose Legendrian Hopf links in lens spaces}

\author{Rima Chatterjee}



\address{Department of Mathematics \\ The Ohio State University,  Columbus, OH-43210}
\email{rchattmath@gmail.com, chatterjee.198@osu.edu}


\begin{abstract}
 We give a complete classification of non-loose Legendrian Hopf links in $L(p,q)$ generalizing a result of the author with Geiges and Onaran \cite{chatterjee2025_Hopf}. The classification is for non-loose Hopf links for both zero and non-zero Giroux torsion in their complement. We also give an explicit algorithm for the contact surgery diagrams for all these Legendrian representatives with no Giroux torsion in their complement.
\end{abstract}

\maketitle

\section{Introduction}
\label{sec:Intro}
In recent years, there has been extensive studies on Legendrian and transverse knots in overtwisted contact manifolds with tight complements, known as non-loose knots (also known as exceptional ). Most of these results are about partial and complete classification of certain knot types \cite{EF, Geiges_exceptionaltorus, Matkovic, etnyre2022nonloose}. The author along with Etnyre, Min, and Mukherjee also proved the existence of these knots in general contact $3$ manifolds and studied their behavior under cabling \cite{Chatterjee-Etnyre-Min_2025_existence}.  These knots are extremely important in understanding certain questions in contact topology, for example they can be used to construct and classify tight contact structures on mani\-folds obtained by surgeries. Although there has been some progress in understanding non-loose knots, results concerning non-loose links remain scarce due to the lack of techniques and tools.  The only link type that has been studied is the Hopf link. Geiges--Onaran gave the first classification of non-loose Hopf links in $S^3$ in all contact structures \cite{Geiges_Onaran} and later in \cite{chatterjee2025_Hopf}, the author together with Geiges and Onaran proved a classification result for Hopf links in $L(p,1)$ with any contact structure. Note that, this is also the first Legendrian classification of a link type in a mani\-fold other than $S^3.$ Here by Hopf links we mean the core of the two Heegaard tori of $L(p,1)$. In \cite{chatterjee2025_Hopf}, we used the classification of tight contact structures on $T^2\times I$ \cite{Giroux, Honda_tight} to give an upper bound on the number of non-loose representatives and then explicitly found the contact surgery diagrams for those representatives.  It turns out that the same technique becomes lot complicated when $q>1$. In this paper, we used convex surface theory to extend the classification result in every $L(p,q)$. This method also allowed us to fully understand how the link components are related via stabilizations thus completing the classification. Though we did not use the contact surgery diagrams in our proof, we gave an algorithm on how to extract the contact surgery diagrams of the Legendrian representatives of the Hopf links. This algorithm shows a beautiful connection of the geometry of the Farey graph and contact surgeries.

While in \cite{chatterjee2025_Hopf} we only proved a classification for Hopf links having zero Giroux torsion in their complement (these are also known as strongly exceptional), in this paper we expand the classification to include Hopf links with Giroux torsion as well. All the classification mentioned in this paper are up to coarse equivalence.

\begin{figure}[!htbp]
     \labellist
     \small\hair 2pt
    \pinlabel \textcolor{blue}{$K_1$} at 40 100
    \pinlabel $a_n$ at 120 130
    \pinlabel $a_{n-1}$ at 190 130
    \pinlabel $\dots$ at 275 60
    \pinlabel $a_1$ at 375 130
    \pinlabel $a_0$ at 425 130
    \pinlabel $\textcolor{red}{K_2}$  at 500 100
    \endlabellist
   
    \centering
    \includegraphics[width=0.7\linewidth]{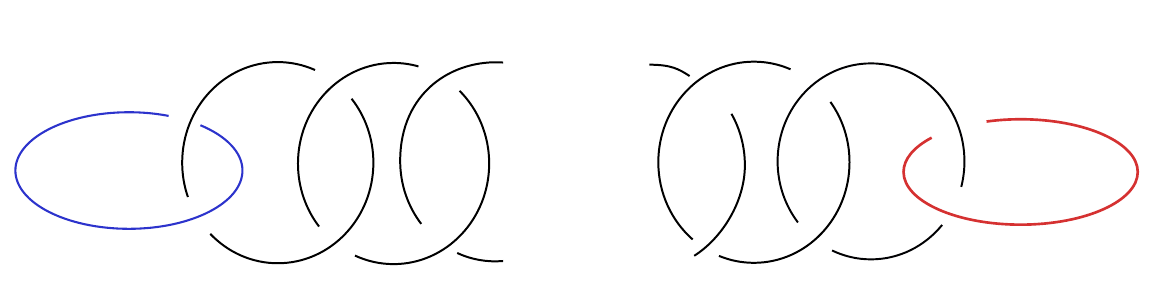}
    \caption{Hopf link in $L(p,q)$ where $-p/q=[a_0,a_1\cdots, a_n]$}
    \label{fig:Hopf}
\end{figure}

By Hopf link we mean the core of the two Heegaaard tori of $L(p,q)$. Figure \ref{fig:Hopf} shows the smooth Hopf link in $L(p,q)$. In this paper by Hopf link we also refer to the positive Hopf link (in the sense of \cite{chatterjee2025_Hopf}) unless otherwise stated. A classification of negative Hopf links will follow similarly after changing the orientation of one of the components. First we recall, the smooth classification of  rational unknots in lens spaces. Let $K_i$ denote the cores of the two Heegaard tori of $L(p,q).$

\[\text{rational unknots in }L(p,q)=\left\{ \begin{array}{cc}
     K_1 & p=2 \\
      K_1,-K_1  &p\neq 2, \ q\equiv\pm 1\pmod p\\
      K_1,-K_1, K_2, -K_2 & \text{otherwise}
     \end{array}\right\}\]

     We begin by mentioning some basic notions of classification of Legendrian knots in a contact mani\-fold. Given a knot type $K$ in an overtwisted manifold $(M,\xi)$, we denote the coarse equivalence class of its Legendrian representatives by $\mathcal{L}(K)$. Consider the map $\Phi\colon\mathcal{L}(K)\rightarrow \mathbb{Z}^2$ that sends a null-homologous Legendrian knot $L$ to $(\rot(L), \tb(L))$ where $\tb$ and $\rot$ denote the Thurston--Bennequin invariant and rotation number of the Legendrian knot. The image of $\phi$ along with the number of elements that has been sent to a single point is known as the {\it mountain range} of $K$. If $K$ is rationally null-homologous then we replace $\tb$ and $\rot$ by $\tbr$ and $\rotr$, the rational Thurston--Bennequin invariant and the rational rotation number and consider a similar map $\Phi\colon\mathcal{L}(K)\rightarrow\mathbb{Q}^2$ that sends $L$ to $(\rotr(L), \tbr(L)).$

     \begin{figure}[!htbp]
\centering
\labellist
\pinlabel $(a,b)$ at 100 30
\pinlabel $(a,b)$ at 170 30
\pinlabel $(a,b)$ at 420 30 
\endlabellist
    \includegraphics[width=0.7\linewidth]{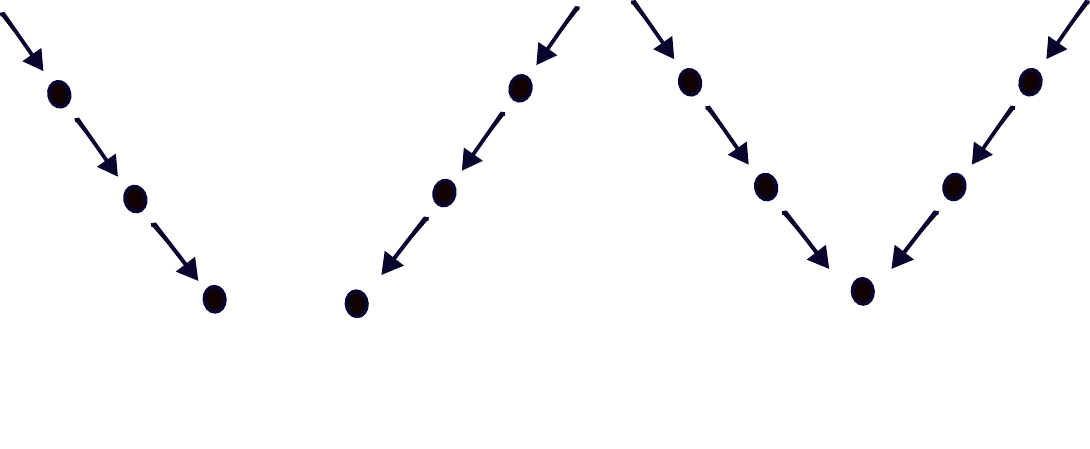}
  \caption{(a) A backward slash and a forward slash based at $(a,b)$. (b) On the right we see a $V$ based at $(a,b)$. }
    \label{fig:V}
\end{figure}
     
     We say that a mountain range for a knot contains a {\it non-loose} $V$ based at $(a,b)$ if the image of $\Phi$ contains non-loose Legendrian knots $L^0, L^i_\pm$ for $i\in\mathbb{N}$ such that \[\tb(L_\pm^i)=b+i\ \ \text{and}\ \ \rot(L_\pm^i)=a\pm i,\] 
     \[\tb(L^0)=b\ \ \text{and}\ \ \rot(L^0)=a\]
     and satisfy 
     \[S_\pm(L_\pm^{i+1})=L_\pm^i,\ \text{and}\ S_\pm(L^1_\pm)=L_0,\]
     \[S_\mp(L^i)\ \text{and}\ S_\pm(L^0)\ \text{are loose}.\]

     Here $S_\pm (L)$ denotes the positive (respectively negative) stabilization of $L$.

     One the other hand we call a $V$ a {\it loose} $V$ if the image of $\Phi$ contains loose Legendrian knots $L^0, L^i_\pm$ for $i\in\mathbb{N}$ which when included in a link gives a non-loose link, satisfies exact same relations are before, except that  \[S_\mp(L^i)\ \text{and}\ S_\pm(L^0)\ \text{will no longer be a part of the non-loose link}.\]

     See the right side of Figure~\ref{fig:V}. We say the mountain range of $L$ contains a {\it non-loose back slash based at $(a,b)$} if the image of $\Phi$ contains non-loose Legendrian knots $L^i$ for $i\in\mathbb{Z}_{\geq 0}$ such that 
     \[\tb(L^i)=b+i\ \text{and}\ \rot(L^i)=a-i\] and satisfy
     \[S_+(L^{i+1})=L^i,\] and \[S_-(L^i)\ \text{and}\ S_+(L^0)\ \text{are loose}.\]

     We say the it's a {\it loose back slash} if all the vertices are loose but could be included as a component of a non-loose link, relations are same as before and \[S_-(L^i)\ \text{and}\ S_+(L^0)\ \text{are not part of the non-loose link.}\]

     Similarly we say a mountain range for $K$ contains a {\it non-loose forward slash  based at $(a,b)$} if the image of $\Phi$ contains non-loose Legendrian knots $L^i$ for $i\in\mathbb{Z}_{\geq 0}$ such that 
     \[\tb(L^i)=b+i\ \text{and}\ \rot(L^i)=a+i\] and satisfy
     \[S_-(L^{i+1})=L^i,\] and \[S_+(L^i)\ \text{and}\ S_-(L^0)\ \text{are loose}.\]
Check the middle picture of Figure~\ref{fig:V}.
     We can define a {\it loose forward slash } exactly as we defined the loose back slash. 


Next we will define a {\it loose cone} of a Legendrian knot. A {\it cone C} of a Legendrian knot $L$ is defined as follows:
\[C(L)=\{S_+^i S_-^{j}(L)\ \text{where}\  i,j\geq 0\}\ \] that is the set which contains all possible stabilizations of the Legendrian $L$. 

We call $C$ a loose cone if every vertex in the cone is a loose knot but can be included in a non-loose link. We call a cone {\it double peaked} if there are two Legendrian knots $L_+$ and $L_-$ at the peak such that $\tb(L+)=\tb(L_-), \rot(L_+)=\rot(L_-)+2$ and $S_+(L_-)=S_-(L_+)$. See Figure~\ref{fig:loosecone}.



\begin{figure}
    \centering
   
    \includegraphics[width=0.3\linewidth]{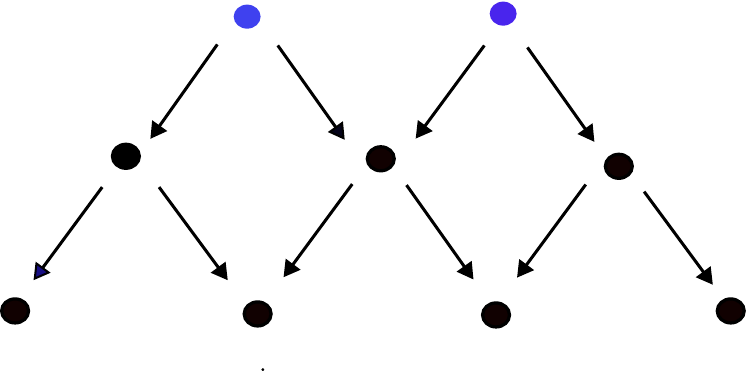}
    \caption{Part of a loose cone with two peaks.}
    \label{fig:loosecone}
\end{figure}

    

We start by stating our result for $L(p,1)$. Note that, in $L(p,1)$ there are only two rational unknots $K_1$ and $-K_1$. Thus we will have two rational positive Hopf links $K_1\sqcup -K_1$ and $-K_1\sqcup K_1$. We will state our result for $K_1\sqcup -K_1$. The classification for the other one can be obtained by interchanging the components. 

\begin{theorem}
\label{thm:L(p,1)} Suppose $L_1\sqcup L_2$ denote the positive Hopf-link in $L(p,1)$. The non-loose realizations of $L_1\sqcup L_2$ (up to switching the components) are as follows: 
\begin{enumerate}

      \item (loose--non-loose pairing): In this case the Thurston-- Bennequin invariants for the pairs are given by $\tbr(L_1)=-(k_1+1)+\frac{1}{p}$ and $\tbr(L_2)=k_2+\frac{1}{p}.$ Fixing $L_1$, the non-loose realizations of $L_2$ forms one non-loose $V$ based at $(0,\frac{1}{p})$ and $p-1$ non-loose $V$s based  at $(-1+\frac{2m}{p}, 1+\frac{1}{p})$ for $m=1,2,\cdots, p-1$. The based vertex of the first $V$ (corresponding to $k_2=0$) pairs with each loose $L_1$ that forms a loose cone with two peaks at $(-1, -1+\frac{1}{p})$ and $(1, -1+\frac{1}{p})$  to give $|k_1-2|$ non-loose realizations. On the other hand, each of the vertex of the non-loose $V$s with $k_2\geq 1$ pairs with a loose cone of $L_1$ based at $(-1+\frac{2m}{p},-1+\frac{1}{p})$ to give non-loose realizations.  The first non-loose $V$ lives in Euler class $0$ and the other $p-1$ non-loose $V$s live in Euler class $-p+2m$ for $m=1,2,\cdots, p-1.$
    
      \item (non-loose and non-loose pairing) There exists a unique non-loose Hopf link with both components being non-loose. The classical invariants of the components are given by $(0,\frac{1}{p})$ and $(0,\frac{1}{p}).$ The contact structure has Euler class $0$. Fixing $L_2$, the positive stabilization of $L_1$ is the peak of the loose cone of $L_1$ from (1).

      \item (loose and loose pairing) The Thurston--Bennequin invariants in this case are given by $\tbr(L_i)=k_i+\frac{1}{p}$. The rotation numbers of the individual components are given in Table~\ref{tab:rotationL(p,1)}. The all possible non-loose realizations are: Fixing the loose unknot $L_2$ with
      \begin{enumerate}
          \item $k_2=0,$ the (loose) mountain range for $L_1$ is given by: $1$ loose back slash and $1$ loose forward slash based at $(-\frac{p+2}{p}, 1+\frac{1}{p})$ and $(\frac{p+2}{p}, 1+\frac{1}{p}$) respectively and $1$ loose $V$ based at $(0,2+\frac{1}{p})$. The forward and back slashes live in Euler class $2$ and the $V$ lives in Euler class $0.$ 
          
          \item $k_2=1$, the (loose) mountain range for $L_1$ is given by: $1$ loose back and $1$ loose forward slash based at $(-\frac{2}{p}, \frac{1}{p})$ and $(\frac{2}{p},\frac{1}{p})$ and $p+1$ loose $V$s based at $(-\frac{p+2-2m}{p}, 1+\frac{1}{p})$ for $m=1,2,\cdots, p+1$. The forward and backward slash live in Euler class $\pm 2$ and the $V$s live in Euler class $-(p+2-2m)$ for $m=1,2,\cdots p+1$.
          
          \item $k_2=2,$ the (loose) mountain range for $L_1$ is given by: $1$ loose back slash and $1$ loose forward slash based at $(-\frac{2}{p}, \frac{1}{p})$ and $(\frac{2}{p}, \frac{1}{p}$) respectively and $1$ loose $V$ based at $(0,\frac{1}{p})$. Moreover, there are $2p$ loose $V$s based at $(-\frac{p+1-2m}{p}\mp\frac{1}{p}, 1+\frac{1}{p})$ for $m=1,2,\cdots, p$. The forward and back slashes live in Euler class $\pm 2$ and the first $V$ lives in Euler class $0.$ The later $2p$ $V$s live in Euler class $-(p+1-2m)\mp 1.$
          
          \item $k_2>2$, the (loose) mountain range for $L_1$ is given by: $2$ loose back slashes based at $(-\frac{2}{p},\frac{1}{p}), (0,\frac{1}{p})$, $2$ loose forward slashes based at $(\frac{2}{p},\frac{1}{p}), (0,\frac{1}{p})$ and $2p$ loose $V$s based at $(-\frac{p+1-2m\mp 1}{p}, 1+\frac{1}{p})$ where $m=1, 2, \cdots p$. The 2 back slashes live in Euler class $-2$ and $0$, the two forward slashes live in Euler class $2$ and $0$ and the $2p$ $V$s live in Euler class $-(p+1-2m)\mp 1$ for $m=1,2,\cdots p.$
      \end{enumerate}

      Fixing loose $L_1$, we will see an exactly same mountain range for $L_2$ as well. 

\end{enumerate}
	
\end{theorem}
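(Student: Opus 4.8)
The plan is to study the complement of a Legendrian representative directly with convex surface theory, as opposed to the contact-surgery approach of \cite{chatterjee2025_Hopf}. Since in $L(p,1)$ the components $K_1$ and $-K_1$ are the cores of the two Heegaard solid tori $V_1,V_2$, removing disjoint standard convex neighborhoods $N_1,N_2$ of $L_1,L_2$ leaves a manifold $M$ diffeomorphic to $T^2\times I$, with $\partial N_1$ and $\partial N_2$ as its two boundary tori. The link $L_1\sqcup L_2$ is non-loose exactly when $(M,\xi|_M)$ is tight while the glued-up $L(p,1)$ is overtwisted. Thus the whole classification reduces to enumerating tight contact structures on $T^2\times I$ with prescribed convex boundary and recording which of them reglue to an overtwisted ambient manifold.

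First I would pin down the boundary data: each $\partial N_i$ is convex with two dividing curves whose slope is determined by $\tbr(L_i)$, and a positive or negative stabilization of $L_i$ moves this slope one step along the Farey graph while peeling off a signed basic slice of $M$. I would then apply Honda's classification of tight contact structures on $T^2\times I$ \cite{Honda_tight} to list, for each pair of boundary slopes, the tight fillings of $M$ as sequences of basic slices recorded by their Farey/continued-fraction data. Counting these tight structures and discarding the ones whose regluing stays tight on $L(p,1)$ produces the upper bound on the number of non-loose realizations.

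Next, for each surviving tight structure I would compute the rational invariants $\tbr,\rotr$ of the two components from the relative Euler class of the basic-slice decomposition, together with the Euler class of the induced contact structure on $L(p,1)$; the base points $(a,b)$ and the Euler-class labels in the statement should emerge from this computation. The stabilization relations $S_\pm(L^{i+1}_\pm)=L^i_\pm$, and the determination of which neighbours are loose, follow from tracking how attaching a bypass changes the dividing slope and when the complement first stops being tight: this is precisely what organizes the representatives into the non-loose $V$s, the forward and back slashes, and the double-peaked loose cones of the statement.

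The main obstacle is the case analysis that appears as $k_2$ passes the thresholds $0,1,2$. As the boundary slope of $L_2$ migrates along the Farey graph, both the number of admissible basic slices for $M$ and the manner in which a loose cone of $L_1$ attaches to the vertices of the non-loose $V$s change qualitatively, and in each regime one must verify exactly which stabilizations stay non-loose, which become loose yet still bound a non-loose link, and how the Euler classes are distributed among the families. Reconciling every base point, multiplicity, and Euler-class value in the four subcases of part (3) --- and in particular confirming the uniqueness asserted in part (2) --- is where the bulk of the careful bookkeeping will be required.
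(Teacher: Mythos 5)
Your proposal follows essentially the same route as the paper: the paper also works directly with the complement $T^2\times I$ of standard neighborhoods of $L_1,L_2$, enumerates the tight structures via decorated minimal Farey paths using the Giroux--Honda classification, organizes the cases by whether each boundary dividing slope is small or large (which is exactly the criterion for the reglued $L(p,1)$ to be overtwisted), computes $\tbr,\rotr$ and Euler classes from the relative Euler class of the basic-slice decomposition, and determines the $V$s, slashes, and loose cones by checking which added basic slices admit consistent shortenings. Your plan is a faithful outline of that argument.
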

\begin{remark}
    In the above statement, some of the candidates will have same Euler classes. But those have different $d_3$ invariants as shown in \cite{chatterjee2025_Hopf}. So they live in distinct overtwisted contact structures. Also, pairwise they have distinct rotation numbers.
\end{remark}
\begin{remark}
     The above theorem recovers the result of \cite{chatterjee2025_Hopf}. Moreover, it also tells us how the individual components of the non-loose representatives are related via stabilizations. Note that, all of the candidates from above have zero Giroux torsion in their complement. 
\end{remark}

   Next we give a complete classification of non-loose Hopf links in $L(2n+1,2)$. Here we need to consider both unknots $K_1$ and $K_2$, and $K_1\sqcup -K_2$ will give us a positive Hopf link. For a negative Hopf link, one needs to switch the orientation of $K_2$ and thus the rotation numbers $L_2$ ($L_i$ denote the Legendrian representatives of $K_i$). In the mountain range for $L_2$ one needs to switch the forward and the back slashes as well. In the following theorem $L_2$ is oriented opposite of $L_1$.

\begin{theorem}
\label{thm:L(2n+1,2)}
The non-loose realizations of $L_1\sqcup L_2$ in $L(2n+1,2)$ with $\tbr(L_1)=k_1+\frac{n+1}{2n+1}$ and $\tbr(L_2)=k_2+\frac{2}{2n+1}$ are as follows:
\begin{enumerate}
    \item (loose/non-loose pairing) For $k_1<0, k_2\geq 0$: Fixing loose $L_1$, the non-loose realizations of $L_2$ form one forward slash based at $(\frac{1}{2n+1}, \frac{2}{2n+1})$, one back slash based at $(-\frac{1}{2n+1}, \frac{2}{2n+1})$ and $n$ $V$s based at $(\frac{2(-n-1+2m)}{2n+1}, 1+\frac{2}{2n+1})$ for $m=1,\cdots, n$. The forward and back slashes live in Euler class $\pm (n+1)$ and the $V$s live in Euler class $(-n-1+2m)$ for $m=1,2,\cdots n$. Each of these non-loose candidates pairs with a loose $L_1$ cone based at $(r,-1+\frac{n+1}{2n+1})$ to give non-loose Hopf links. More detail about this pairing and $r$ will be given in Section~\ref{sec:examples}.
    


    \item (non-loose/loose pairing) For $k_1\geq 0, k_2<0$: Fixing loose $L_2$, the non-loose realizations of $L_1$ form $1$ forward slash based at $(\frac{n}{2n+1},\frac{n+1}{2n+1})$, one back slash based at $(-\frac{n}{2n+1},\frac{n+1}{2n+1})$, $n-1$ $V$s based at $(\frac{(-n+2m_1)}{{2n+1}}, \frac{n+1}{2n+1})$ where $m_1=1,\cdots n-1$ and $n$ $V$s based at $(\frac{(-n+1+2m_2)}{2n+1}, 1+\frac{n+1}{2n+1})$ for $m_2=0,1,\cdots n-1.$ The forward and back slashes live in Euler class $\pm n$, the first $n-1$ $V$s live in Euler class $-n+2m_1$ for $m_1=1,2,\cdots n-1$ and the later $n$\ $V$s live in Euler class $-n+1+2m_2$ for $m_2=0,1,\cdots n-1$.  Each of these non-loose candidates pairs with a loose $L_2$ cone based at $(r,-1+\frac{2}{2n+1})$ to give non-loose Hopf links. More detail on this pairing and $r$ will be given in Section~\ref{sec:examples}.
    

    \item (loose/loose pairing) $k_1\geq 0, k_2\geq0$, both the components are loose. 
    
    \begin{itemize}\item Fixing $L_2$, the loose mountain range for $L_1$ follows: \begin{enumerate}
        \item For $k_2=0$, we have $1$ loose forward and 1 loose back slash based at $(\pm\frac{n+2}{2n+1},\frac{n+1}{2n+1})$ respectively and $2$ loose $V$ 's based at $(\mp\frac{n+1}{2n+1}, 1+\frac{n+1}{2n+1})$. The forward and back slashes live in Euler class $\pm (n+2)$ and the $V$s live in Euler class $\mp (n+1)$.
        \item  For $k_2=1$, the loose mountain range of $L_1$ is given by the same forward and back slash from before, $n+1$ loose $V$ s based at $(\frac{-n-2+2m_1}{2n+1},\frac{n+1}{2n+1})$ and $n+2$ loose $V$s based at $(\frac{-n-1+2m_2}{2n+1},1+\frac{n+1}{2n+1})$ where $m_1=0,\cdots n$ and $m_2=0,1,\cdots n+1$. The $n+1$ $V$s live in Euler class $(-n-2+2m_1)$ and the $n+2$ $V$s from above live in Euler class $(-n-1+2m_2)$.
        
        \item Finally for $k_2>1$, we have $2$ forward and $2$ back slashes based at $(\frac{n+1}{2n+1}\mp\frac{1}{2n+1},\frac{n+1}{2n+1})$ and $(-\frac{n+1}{2n+1}\mp\frac{1}{2n+1},\frac{n+1}{2n+1})$ respectively, $2n$ $V$'s based at $(\frac{-n-1+2m_1}{2n+1}\mp\frac{1}{2n+1}, \frac{n+1}{2n+1})$ and $2(n+1)$ $V$s based at $(\frac{-n+2m_2}{2n+1}\mp\frac{1}{2n+1},1+ \frac{n+1}{2n+1})$ where $m_1=1, 2,\cdots n$ and $m_2=0,1,\cdots n$. The two forward slashes live in Euler classes $n, n+2$, the two back slashes live in Euler class $-n, -n-2$, the $2n$ $V$s live in Euler class $-n-1+2m_1\mp 1$ and the $2(n+1)$ $V$s live in Euler class $-n+2m_2\mp 1.$
        
    \end{enumerate}  
    For fixed $L_2$ with $k_2\geq 0$, the loose mountain range of $L_1$ is given in Figure \ref{fig:looseL(2n+1,2)K2fixed}.

    \item Fixing $L_1$, the loose mountain range for $L_2$ follows:
    \begin{enumerate}
        \item For $k_1=0$, there is $1$ forward and $1$ back slash based at $(\pm\frac{3}{2n+1},\frac{2}{2n+1})$ respectively and $n+1$ $V$'s based at $(\frac{2(-n-2+2m)}{2n+1}, 1+\frac{2}{2n+1})$ where $m=1,2,\cdots n+1$. The forward and backward slashes live in Euler classes $\pm (n+2)$ and the $V$s live in $(-n-2+2m)$ for $m=1,2, \cdots n+1.$
        \item  For $k_1=1$, there is $1$ forward and $1$ backward slash from the previous case, $2$ V's based at $(\pm \frac{1}{2n+1}, \frac{2}{2n+1})$ and $3n$ $V$s based at $(\frac{-2+2m_1}{2n+1}+\frac{2(-n-1+2m_2)}{2n+1})$ where $m_1=0,1,2$ and $m_2=1,\cdots n.$ The two $V$s live in Euler class $\pm(1+n)$ and the $3n$ $V$s live in Euler class $(n+1)(-3+2m_1)+2m_2.$
        \item Finally for $k_1>1$, there are two forward slashes based at $(\frac{3}{2n+1},\frac{2}{2n+1})$ and at $(\frac{1}{2n+1}, \frac{2}{2n+1})$, $2$ back slashes based at $(-\frac{3}{2n+1},\frac{2}{2n+1})$ and at $(-\frac{1}{2n+1}, \frac{2}{2n+1})$ and $4n$ $V$s based at\\ $(\mp\frac{1}{2n+1}\mp\frac{1}{2n+1}+\frac{2(-n-1+2m)}{2n+1}, 1+\frac{2}{2n+1})$ for $m=1,2,\cdots n$. The Euler class of the forward slashes are $n+1$, for the back slashes are $-n-1$, for the $4n$ $V$s are $\pm n\mp (n+1)+(-n-1+2m)$.
    \end{enumerate}
    
     For fixed $L_1$ with $k_1\geq 0$ the loose mountain range of $K_2$ is given in Figure \ref{fig:looseL(2n+1,2)K1fixed}. The rotation numbers of the pairs are given in Table~\ref{tab:L(2n+1,2)}.
    \end{itemize}

\end{enumerate}                                                                        
\end{theorem}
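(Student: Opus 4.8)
The plan is to reduce the classification to convex surface theory on the link complement. Writing $L(2n+1,2)=V_1\cup V_2$ as a genus-one Heegaard decomposition with cores $K_1,K_2$, I would remove standard contact neighborhoods $N_1,N_2$ of Legendrian representatives $L_1,L_2$; since each $V_i\setminus N_i$ is a shell $T^2\times I$ and the two shells are glued along the Heegaard torus, the complement $M=L(2n+1,2)\setminus(N_1\cup N_2)$ is again $T^2\times I$. After making $\partial M$ convex, each boundary torus carries two dividing curves whose slope is recorded by the contact framing, hence by $\tbr(L_i)$. The governing dictionary is that $L_1\sqcup L_2$ is non-loose exactly when $M$ is tight, whereas a single component $L_i$ is non-loose exactly when the solid torus obtained by filling the \emph{other} neighborhood back in is tight. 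I would fix a basis of $H_1(T^2)$ adapted to the gluing of $L(2n+1,2)$ so that the rational longitudes of $K_1$ and $K_2$ produce the fractional parts $\frac{n+1}{2n+1}$ and $\frac{2}{2n+1}$ in the stated formulas $\tbr(L_1)=k_1+\frac{n+1}{2n+1}$ and $\tbr(L_2)=k_2+\frac{2}{2n+1}$; the integers $k_i$ then measure the depth of the boundary slope along the Farey graph, i.e. the number of stabilizations.

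With the two boundary slopes fixed, the next step is to enumerate the tight contact structures on $M$ via Honda's classification of tight $T^2\times I$. Restricting to the minimally twisting (zero Giroux torsion) structures relevant to this theorem, each such structure factors as a sequence of basic slices along a minimal geodesic in the Farey graph joining the two boundary slopes, and Honda's product formula counts them; this yields the multiplicities in the statement (the numbers of $V$'s and slashes). The sign of each basic slice is read off from the relative Euler class of $\xi|_M$, and summing these signs over the slice decomposition gives $\rotr(L_i)$; ranging over all admissible sign patterns produces precisely the listed vertices. Capping $M$ off with the solid tori $N_1,N_2$ and combining the relative Euler class of $\xi|_M$ with the contributions of the two filling tori then yields the Euler class of the ambient overtwisted structure, placing each $V$ and slash in its stated contact structure.

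The third step is to install the stabilization relations that organize the vertices into $V$'s, forward/back slashes, and loose cones. A positive (resp. negative) stabilization of $L_i$ corresponds to attaching one positively (resp. negatively) signed basic slice to $M$ along $\partial N_i$, i.e. to a single Farey step that lowers $\tbr$ by $1$ and shifts $\rotr$ by $\pm1$. Whether the stabilization stays non-loose is exactly whether the enlarged $T^2\times I$ remains tight: shuffling an outer basic slice against one of opposite sign either preserves a minimal geodesic, producing the two legs of a $V$, or forces a non-minimal path carrying a bypass that produces an overtwisted disk, yielding the dead ends $S_\mp(L^i)$ and $S_\pm(L^0)$. A slash appears where one leg of a prospective $V$ is obstructed by the lens-space framing, and a double-peaked loose cone appears where two distinct minimal geodesics, with peaks differing by $\rotr\mapsto\rotr+2$ and joined by $S_+(L_-)=S_-(L_+)$, are simultaneously available.

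The main obstacle, and the feature separating $L(2n+1,2)$ from the $L(p,1)$ case already handled, is the heavier continued-fraction combinatorics forced by $q=2$: the minimal geodesic between the two boundary slopes is longer, its sign-shuffling equivalence classes are subtler, and converting basic-slice sign patterns into the precise inventory of $V$'s (including the shifts $\mp\frac{1}{2n+1}$), slashes, and Euler classes demands careful bookkeeping. I expect the bulk of the work to lie in the loose/loose pairing, where one must simultaneously certify that each single-component complement is overtwisted while the two-component complement stays tight; tracking exactly which bypasses create an overtwisted disk after one neighborhood is filled but not after both are removed is the delicate point, and I anticipate the sub-cases $k_2=0,1,>1$ (and the mirror cases for $L_1$) to be devoted to precisely this accounting.
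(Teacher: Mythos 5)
Your plan is essentially the paper's own proof: decompose $L(2n+1,2)$ into two solid tori, identify the link complement with $T^2\times I$, enumerate minimally twisting tight structures via decorated Farey paths and basic slices (split into the small/large slope cases that give the loose/non-loose, non-loose/loose, and loose/loose pairings), read off $\tbr$ and $\rotr$ from the relative Euler class, and organize the vertices into $V$'s and slashes by checking whether adding a stabilizing basic slice admits a consistent shortening. The only caveat is that this is an outline rather than the executed bookkeeping (the explicit continued-fraction counts, rotation-number tables, and the orientation convention making $B_2^{\pm}$ correspond to $S_{\pm}(L_2)$ for the positive Hopf link), but the strategy and all key lemmas match the paper's argument.
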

\begin{remark}
    Note that, in case (3) when $L_1$ is fixed at $k_1>2$ among the $4n$ loose $V$s, $2n$ $V$s have same Euler class, but they can be distinguished by their $d_3$ invariant. The $d_3$ invariants of these overtwisted contact structures can be easily computed from the explicit contact surgery diagrams given in Section \ref{sec:contactsurgery}. But as they can be all distinguished by their pairwise distinct rotation numbers we refrained ourselves from the algebraic calculations.
\end{remark}

Next we give the general classification result for the positive Hopf link $L_1\sqcup L_2$ in $L(p,q)$ for $q>1.$ $L_1$ and $L_2$ are oriented opposite as before. We will denote the negative continued fraction 

\[
a_0 - \cfrac{1}{a_1 - \cfrac{1}{a_2 - \cfrac{1}{\ddots - \cfrac{1}{a_n}}}}
\]
for $a_i\leq -2$ by $[a_0,a_1,\cdots , a_n].$

\begin{theorem}
\label{thm:general} Suppose $(p,q)$ is a pair of relatively prime integers with $p>q> 1$. Let $-p/q=[a_0,a_1,\cdots a_n]$ and $n\geq 1$ as $q\neq 1$. In $L(p,q)$, the non-loose Legendrian representatives of $L_1\sqcup L_2$ are as follows:
	\begin{enumerate}
	    \item {(loose-nonloose pairing)} For fixed $L_1$, the non-loose realizations of $L_2$ form

  \[
  \left\{\begin{array}{lr}
       1 , & n=1\\
     |a_2+1||a_3+1|\cdots|a_n+1|  ,  &n\geq 2\\
     
        \end{array}\right\} 
  \]many non-loose forward slashes and the same number of non-loose back slashes based at $(r,\frac{q}{p})$, and
  \[ \left\{\begin{array}{lr}
       |a_1+2| , & n=1\\
     |a_1+2||a_2+1|\cdots|a_n+1| ,  &n\geq 2\\
     
        \end{array}\right\} \] many non-loose $V$'s based at $(r,\frac{q}{p})$. Additionally, there are \[|a_0+1||a_1+1|\cdots|a_n+1|\]
$V$'s based at $(r,1+\frac{q}{p}).$ Each of these pairs with a loose $L_1$ that belongs to a loose cone $C$ based at $(r, -1-\frac{p''}{p})$ to give us the loose and non-loose pairings where $ 1\leq p''\leq p \ \text{and}\\\ p''q\equiv p-1\pmod p$.
        
        \item {(nonloose-loose pairing) } For fixed $L_2$, the non-loose realizations of $L_1$ form

  \[
  \left\{\begin{array}{lr}
       1 , & n=1\\
     |a_0+1||a_1+1|\cdots|a_{n-2}+1|  ,  &n\geq 2\\
     
        \end{array}\right\} 
  \]many non-loose forward slashes and the same number of non-loose back slashes based at $(r,\frac{p'}{p})$, and
  \[ \left\{\begin{array}{lr}
       |a_0+2| , & n=1\\
     |a_0+1||a_1+1|\cdots|a_{n-1}+2| ,  &n\geq 2\\
     
        \end{array}\right\} \] many non-loose $V$'s based at $(r,\frac{p'}{p})$. Additionally, there are \[|a_0+1||a_1+1|\cdots|a_n+1|\]
$V$'s based at $(r,1+\frac{p'}{p}).$ Each of these pairs with each loose $L_1$ that belongs to a loose cone $C$ based at $(r, -1+\frac{q}{p})$ to give us the non-loose and loose pairings where $1\leq p'\leq p$ and $pq'\equiv 1\pmod p$.
        \item{(loose-loose pairings)} The non-loose Hopf links with both components loose and having $\tbr(L_1)=k_1+\frac{p'}{p}$ and $\tbr(L_2)=k_2+\frac{q}{p}$ form the following mountain range:
        
        When we fix the loose $L_1$
        \begin{enumerate}[(i)]
            \item For $k_1=0$, the loose Legendrian representatives of $L_2$ forms \[
  \left\{\begin{array}{lr}
       1 , & n=1,2\\
     |a_2+1|\cdots|a_{n-2}+1||a_{n-1}|  ,  &n\geq 3\\
     
        \end{array}\right\} 
  \]many loose forward slashes and the same number of loose back slashes based at $(r,\frac{q}{p})$, and
  \[ \left\{\begin{array}{lr}
       0 , & n=1\\
       |a_1+1| , & n=2\\
     |a_1+2||a_2+1|\cdots|a_{n-2}+1||a_{n-1}| ,  &n\geq 3\\
     
        \end{array}\right\} \] many loose $V$'s based at $(r,\frac{q}{p})$.
        
        Additionally, there are
          \[ \left\{\begin{array}{lr}
       2|a_0-1| , & n=1\\
    |a_0+1||a_1+1|\cdots|a_{n-1}| ,  &n\geq 2\\
       \end{array}\right\}\]
$V$'s based at $(r,1+\frac{q}{p}).$ 
            \item  For $k_1=1$, the loose Legendrian representatives of $L_2$ forms \[
  \left\{\begin{array}{lr}
       1 , & n=1\\
     |a_2+1|\cdots|a_{n-1}+1||a_{n}-1|  ,  &n\geq 2\\
     
        \end{array}\right\} 
  \]many loose forward slashes and the same number of loose back slashes based at $(r,\frac{q}{p})$, and
  \[ \left\{\begin{array}{lr}
       |a_1| , & n=1\\
     |a_1+2||a_2+1|\cdots|a_{n-1}+1||a_{n}-1| ,  &n\geq 2\\
     
        \end{array}\right\} \] many loose $V$'s based at $(r,\frac{q}{p})$. Additionally, there are \[|a_0+1||a_1+1|\cdots|a_{n}-1|\]
$V$'s based at $(r,1+\frac{q}{p}).$ 

            \item For $k_1\geq2$, the loose Legendrian representatives of $L_2$ forms \[
  \left\{\begin{array}{lr}
       2 , & n=1\\
     2|a_2+1|\cdots|a_{n-1}+1||a_{n}|  ,  &n\geq 2\\
     
        \end{array}\right\} 
  \]many loose forward slashes and the same number of loose back slashes based at $(r,\frac{q}{p})$, and
  \[ \left\{\begin{array}{lr}
       2|a_1+1| , & n=1\\
     2|a_1+2||a_2+1|\cdots|a_{n-1}+1||a_{n}| ,  &n\geq 2\\
     
        \end{array}\right\} \] many loose $V$'s based at $(r,\frac{q}{p})$. Additionally, there are  $2|a_0+1||a_1+1|\cdots|a_{n}|$         
        $V$'s based at $(r,1+\frac{q}{p}).$ 
        \end{enumerate}
        An algorithm for $r$ and the Euler classes will be given in Section~\ref{sec:background}.
       If we fix $L_2$, the mountain range is given in Figure~\ref{fig:Fix_K_2_general} for $n\geq 2$ and in Figure~\ref{fig:Fix_K_2_general_n=1} for $n=1$.
	\end{enumerate}
\end{theorem}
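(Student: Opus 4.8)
The plan is to run the convex-surface-theory argument that established Theorems~\ref{thm:L(p,1)} and~\ref{thm:L(2n+1,2)}, now carried out for an arbitrary negative continued fraction. The starting observation is that a Legendrian representative $L_1\sqcup L_2$ of the Hopf link is non-loose precisely when the contact structure on the complement $C = L(p,q)\setminus\bigl(N(L_1)\cup N(L_2)\bigr)$ is tight. Since $K_1$ and $K_2$ are the cores of the two Heegaard solid tori, $C$ is diffeomorphic to $T^2\times I$. So I would reduce the entire classification to describing the tight contact structures on $T^2\times I$ with convex boundary, together with the two solid-torus fillings $N(L_i)$ that reconstruct the overtwisted $(L(p,q),\xi)$. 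The two roles of $L_1$ and $L_2$ are handled symmetrically by interchanging the two boundary tori, which is why fixing $L_1$ and fixing $L_2$ yield the parallel statements of parts~(1)--(3).

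First I would fix coordinates. Making each $T_i = \partial N(L_i)$ convex, its dividing set consists of two parallel curves whose slope is governed by $\tbr(L_i)$ through the relation between the rational Thurston--Bennequin invariant and the contact framing, computed against a rational Seifert surface. Writing $-p/q = [a_0,\dots,a_n]$ sets up the basis in which these boundary slopes, and all the intermediate slopes, are vertices of a minimal path in the Farey graph joining the meridian of one solid torus to that of the other. This is exactly where the continued-fraction coefficients $a_i$ enter the combinatorics.

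The arithmetic heart of the argument is Honda's classification of tight contact structures on $T^2\times I$ \cite{Honda_tight,Giroux}: for fixed boundary slopes the number of tight, minimally twisting structures equals the product $\prod|a_i+1|$ over the interior coefficients of the Farey path joining the two boundary slopes, with prescribed corrections at the two ends. I would compute this product for the boundary data dictated by each regime of $(k_1,k_2)$. The dependence on whether $k_i=0$, $1$, or $\ge 2$ reflects how far the dividing slope on $T_i$ has been pushed from the extremal (maximal-$\tbr$) slope by stabilization: each such shift moves an endpoint of the Farey path by one edge and hence drops or alters a factor, producing exactly the counts $|a_2+1|\cdots|a_n+1|$, $|a_0+1|\cdots|a_n+1|$, and the end-corrected variants such as $|a_n-1|$ and $|a_{n-1}|$ recorded in the statement. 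The values of $r$ and the Euler classes are then read off by computing $\rotr$ and the relative Euler class for each admissible dividing configuration, which is the content deferred to Section~\ref{sec:background} and the cited figures.

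To pass from these counts to the mountain ranges I would analyze how stabilization acts on $C$: a stabilization of $L_i$ moves the dividing slope on $T_i$ by a single edge of the Farey graph, realized by a bypass move. Tracking whether the resulting complement stays tight or becomes overtwisted yields the $S_\pm$ relations, identifies which stabilizations are loose, and thereby sorts the admissible vertices into non-loose forward slashes, back slashes, and $V$'s according to the definitions given earlier in the introduction; allowing the complement to be non-minimally twisting accounts for the representatives carrying Giroux torsion. The main obstacle is the bookkeeping in the arithmetic step: correctly translating Honda's Farey-path count into the precise product formulas as $(k_1,k_2)$ vary, pinning down the end-corrections, and then matching each resulting vertex to the correct slash-or-$V$ type by tracking the bypass moves against the combinatorics of the path for $-p/q$.
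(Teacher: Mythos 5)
Your proposal follows essentially the same route as the paper: reduce non-looseness to tightness of the complementary $T^2\times I$, identify the boundary dividing slopes from $\tbr(L_i)$, count via Giroux--Honda's decorated Farey paths with the end-corrections coming from the $k_i=0,1,\ge 2$ regimes, and sort vertices into slashes and $V$'s by checking whether adding a basic slice admits a consistent shortening. The only caveat is that the Giroux-torsion representatives belong to Theorem~\ref{thm:Giroux_torsion} rather than this statement, but otherwise the plan matches the paper's proof.
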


\begin{figure}
    \centering
    \labellist
   
    \pinlabel $\underbrace{\hspace{7 em}}$ at 450 330
    \pinlabel $\underbrace{\hspace{7 em}}$ at 1120 390
    \pinlabel $\underbrace{\hspace{7 em}}$ at 1120 730
    \pinlabel $\underbrace{\hspace{7 em}}$ at 450 00
  \pinlabel $\underbrace{\hspace{7 em}}$ at 1120 50
    \tiny
     \pinlabel $k_2=0$ at 700 650
     \pinlabel $k_2=1$ at 700 300
     \pinlabel $k_2>1$ at 700 -30
    \pinlabel $|a_0|$ at 450 280
    \pinlabel $|a_0-1||a_1+1|$ at 1120 370
    \pinlabel $|a_1|$ at 1120 700
    \pinlabel $2|a_0+1|$ at 450 -20
    \pinlabel $2|a_0||a_1+1|$ at 1120 0
    \endlabellist
    \includegraphics[scale=0.25]{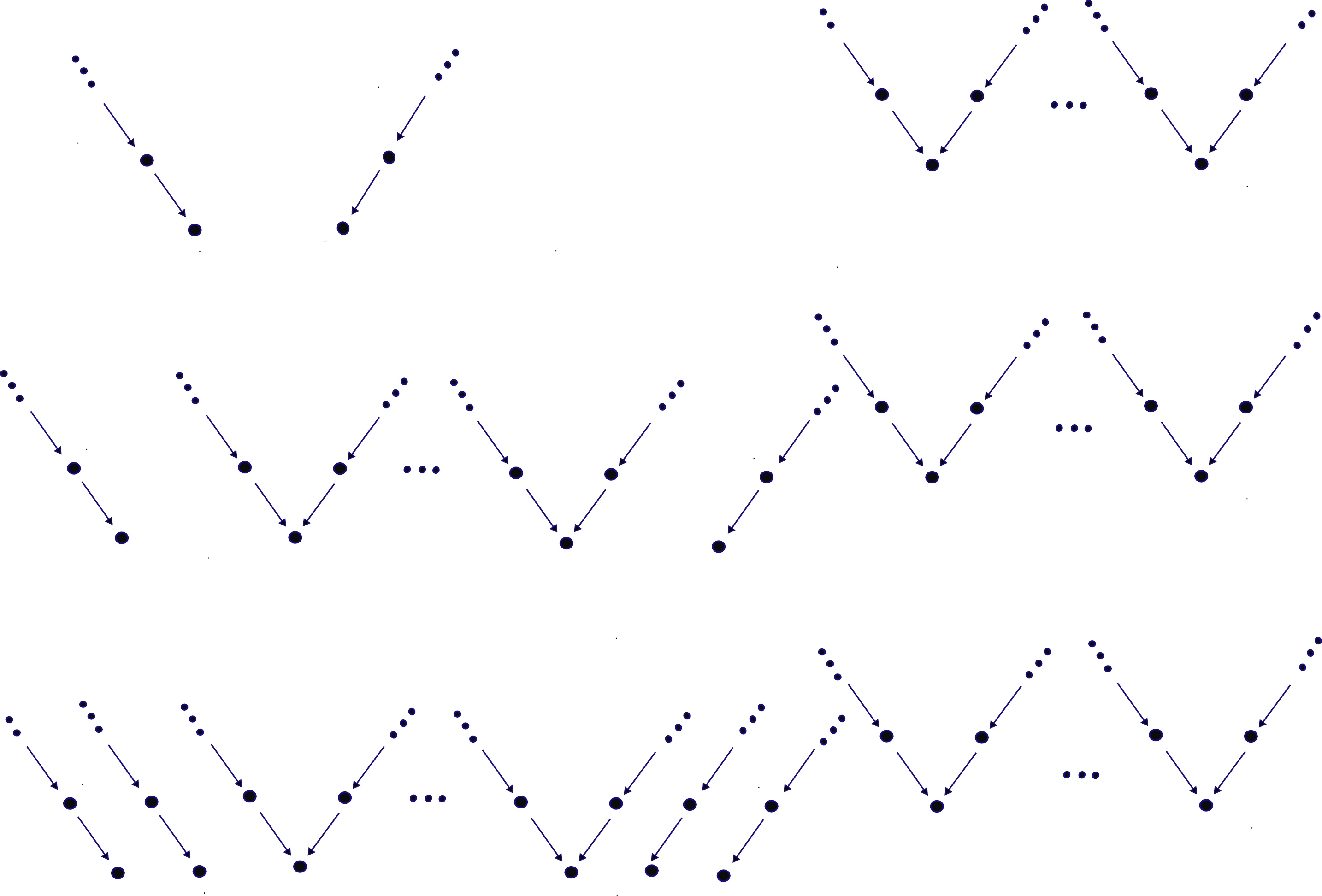}
    \caption{The loose mountain range of $L_1$ when we fix $L_2$ for $-p/q=[a_0,a_1].$ }
    \label{fig:Fix_K_2_general_n=1}
\end{figure}

Note that, all the above results are strongly exceptional as they have no Giroux torsion in their complement. The following theorem gives a complete classification of non-loose Hopf links with non-zero (convex) Giroux torsion.
\begin{theorem}
\label{thm:Giroux_torsion}
Suppose $L_1\sqcup L_2$ be the positive Hopf link in $L(p,q)$ where $p>q\geq 1$. The classification of exceptional $L_1\sqcup L_2$ with $\pi$ Giroux torsion in its complement is as follows:
\begin{enumerate}
    \item For $q=1$, for each choice of integers $(k_1, k_2)\neq 0$ and a natural number $m$ there is exactly a pair of non-loose Legendrian (positive) Hopf links $L_1\sqcup L_2$ distinguished by the rotation numbers, with $\tbr(L_i)=k_i+\frac{1}{p}$ and with $m\pi$ Giroux torsion in the complement. For $k_1=k_2=0$, there is exactly one non-loose Hopf link with $m\pi$ Giroux torsion in the complement. For $k_1<0, k_2<0$, the Hopf link is in tight $L(p,1).$ For $k_1<0$ and $k_2\geq 0$ $L_1$ is loose and $L_2$ is non-loose. For $k_1\geq 0$ and $k_2<0,$ $L_1$ is non-loose and $L_2$ is loose. Finally for $k_1\geq 0, k_2\geq 0 $ and $m>0$ both components are loose. 
    \item For $q\neq 1$, for each choice of integers and a natural number $m$ there is exactly a pair of non-loose Legendrian Hopf links $L_1\sqcup L_2$ distinguished by their rotation numbers and with $m\pi$ Giroux torsion in the complement. The $\tbr$ of the components are as follows:
    \begin{enumerate}
   \item  For $k_1<0, k_2<0,$ the Hopf links are in tight $L(p,q)$ with $\tbr(L_1)=k_1-p''/p$ and $\tbr(L_2)=k_2+q/p$.
    \item For $k_1\geq 0, k_2<0 ,$ $L_1$ is non-loose and $L_2$ is loose with $\tbr(L_1)=k_1+p'/p$ and $\tbr(L_2)=k_2+q/p.$
    \item  For $k_1< 0, k_2>0 ,$ $L_1$ is loose and $L_2$ is non-loose with $\tbr(L_1)=k_1-p''/p$ and $\tbr(L_2)=k_2+q/p.$ and
    \item  For $k_1\geq 0,k_2\geq 0$, both the components are loose with $\tbr(L_1)=k_1+p'/p$ and $\tbr(L_2)=k_2+q/p.$
\end{enumerate}
where $p', p''$ are same as in Theorem~\ref{thm:general}.
\end{enumerate}
   
\end{theorem}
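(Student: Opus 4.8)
The plan is to run the same convex-surface analysis that underlies Theorem~\ref{thm:general} (and, for $q=1$, Theorem~\ref{thm:L(p,1)}), but now permitting the thickened torus that forms the link complement to carry Giroux torsion. First I would recall that for any Legendrian representative $L_1\sqcup L_2$ of the Hopf link, deleting standard convex neighborhoods $\nu(L_1),\nu(L_2)$ of the two components leaves a thickened torus $T^2\times I$, and the link is non-loose precisely when this $T^2\times I$ is tight; the dividing slopes on the two boundary tori $\partial\nu(L_i)$ are recorded by $\tbr(L_i)$. Thus the classification reduces to understanding the tight contact structures on $T^2\times I$ that have the prescribed boundary slopes and $m\pi$ Giroux torsion, together with the way each glues to the two solid tori that recover $L(p,q)$. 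I would handle $q=1$ with the boundary data of Theorem~\ref{thm:L(p,1)} and $q\neq 1$ with that of Theorem~\ref{thm:general}.

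Next I would invoke Honda's classification of tight contact structures on $T^2\times I$: such a structure is determined by its two boundary slopes together with a basic-slice (sign) decomposition, and its convex Giroux torsion is quantized in units of $\pi$. To produce a representative with $m\pi$ torsion I would insert $m$ many $\pi$-torsion layers into the minimally twisting complement already constructed in the zero-torsion case. The two outermost layers bordering such a torsion region can be assigned two opposite signs, and I expect these choices to yield contact structures with the \emph{same} boundary slopes, hence the same $\tbr(L_i)$, but opposite rotation contributions. This is the mechanism by which I would prove that for each $(k_1,k_2)$ and each $m$ one obtains exactly a pair of links distinguished by $\rotr$, collapsing to a single link in the symmetric case $k_1=k_2=0$, where the two sign choices are interchanged by the symmetry that swaps the components.

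I would then compute the classical invariants. Since inserting torsion in the interior does not change the boundary data, $\tbr(L_i)$ is read directly from the dividing slopes and matches the stated $k_i+p'/p$, $k_i+q/p$, and $k_i-p''/p$ formulas with $p',p''$ as in Theorem~\ref{thm:general}; the rotation numbers and Euler classes are obtained by summing the contributions of the individual basic slices and torsion layers. To decide which ambient $L(p,q)$ appears, tight versus overtwisted, and which components are loose, I would analyze the gluing of the torsion region to each solid torus: a $\pi$-torsion layer sweeps through every slope, so abutting it against a solid torus whose meridian occurs in that sweep forces a convex torus with meridional dividing curves and hence an overtwisted disk. The sign of $k_i$ then dictates in which filling this happens, producing the case division, with $k_i\geq 0$ making the partner component loose and the regime $k_1<0,k_2<0$ recovering the Legendrian Hopf links that already sit inside the tight $L(p,q)$.

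The main obstacle I anticipate is precisely this gluing-and-torsion analysis: controlling how a Giroux torsion layer interacts with the two solid-torus fillings, and verifying that no tight complement has been missed or double counted. Two points are delicate. First, I must show that the presence of $m\pi$ torsion collapses the rich collection of slashes and $V$'s from Theorem~\ref{thm:general} down to a single pair, i.e.\ that the torsion rigidifies the basic-slice signs everywhere except at the two outermost layers; I would establish this by a bypass and state-traversal argument across the torsion region. Second, several candidates will share an Euler class, and these must be separated by their $d_3$ invariants and pairwise-distinct rotation numbers, exactly as in the remarks following Theorems~\ref{thm:L(p,1)} and~\ref{thm:L(2n+1,2)}; here I would appeal to the explicit contact-surgery description referenced in the zero-torsion case.
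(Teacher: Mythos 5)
Your overall strategy---reduce to tight contact structures on the complementary $T^2\times I$ with the prescribed boundary slopes and an inserted torsion region, read $\tbr(L_i)$ off the dividing slopes (unchanged by interior torsion), and decide tightness of the ambient $L(p,q)$ and looseness of each component from how the pieces glue to the two solid tori---is the same as the paper's, and those parts of your plan are sound.

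The gap is in the central counting step, i.e.\ why exactly a pair survives for each choice of $(k_1,k_2)$ and $m$. You locate the two candidates in ``two opposite signs'' assigned to ``the two outermost layers bordering the torsion region'' and propose to prove that the torsion ``rigidifies the basic-slice signs everywhere \emph{except} at the two outermost layers.'' That is not the right statement: if the two outermost slices retained independent sign choices you would get more than a pair, and if only those two slices were constrained you could not rule out the mixed-sign decorations that produce the many slashes and $V$'s of Theorem~\ref{thm:general}. The correct mechanism, and the one the paper uses, is global: a convex $\pi$-torsion layer corresponds to a decorated path that wraps once around the Farey graph, so inserting it makes the path describing the complement non-minimal, and by Honda's criterion (Theorem~\ref{thm:tight}) tightness forces this path to be consistently shortenable. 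Any mismatch of sign anywhere---between the torsion layer and the rest of the complement, or inside any continued fraction block---yields an inconsistent shortening and hence an overtwisted complement. Consequently \emph{every} basic slice in the entire complement must carry one and the same sign, leaving exactly the all-positive and all-negative decorations (distinguished by their rotation numbers), and exactly one candidate when $q=1$ and $k_1=k_2=0$, where the complement is $I$-invariant rather than being identified by a component-swapping symmetry as you suggest. Your proposed bypass/state-traversal argument would need to establish this global rigidity of signs, not rigidity away from the outermost layers.
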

\begin{remark}
    The above theorem recovers Theorem 1.2 (e) from \cite{Geiges_Onaran} for $p=1$. Furthermore, the above theorem also gives the classification of non-loose Hopf links with nonzero Giroux torsion in $L(p,1)$, filling the gap left in \cite{chatterjee2025_Hopf}.
\end{remark}
\subsection{Organization} In Section~\ref{sec:background} we give some preliminaries and definitions. In Section~\ref{sec:examples}, we prove Theorem~\ref{thm:L(p,1)}, and \ref{thm:L(2n+1,2)}. In Section~\ref{sec:general_result} we give the proof of our main theorems Theorem~\ref{thm:general} and ~\ref{thm:Giroux_torsion}. We finish by including an algorithm for the contact surgery diagrams of the non-loose realizations in Section~\ref{sec:contactsurgery}.
\subsection{Acknowledgement} The author would like to express her deepest gratitude to John Etnyre for his invaluable guidance and encouragement throughout the project, as well as for the many enlightening discussions. This project started when the author was visiting Georgia Tech in July 2024 and part of this work was completed during the author's visit in July 2025. This research is partially supported by the
Georgia Institute of Technology’s Elaine M. Hubbard Distinguished Faculty Award and NSF-AWM mentoring travel grant. The author would also like to thank Georgia Tech for their hospitality.


\section{Background}
\label{sec:background}
We assume the reader is familiar with basic contact geometry, Legendrian knots and convex surface theory, as can be found in \cite{Etnyre_Honda_knots, Geiges, Honda_tight}. In this section, we will briefly recall some of the important results for the convenience of the reader and to establish the notations that we use throughtout the paper.  In Section~\ref{ssec:Farey} we recall the Farey graph and discuss its relation with curves on tori, next we discuss the classification of tight contact structures on solid tori, $T^2\times [0,1]$ and lens spaces. In Section~\ref{ssec:Legendrian}, we review basic facts about Legendrian knots, such as standard neighborhoods and how these are related by stabilizations. In Section~\ref{ssec:Hopf} and Section~\ref{ssec:computation_preliminary} we discuss rationally null homologous knots and computation of their classical invariants. 
\subsection{The Farey graph}\label{ssec:Farey} The Farey graph is essential in keeping track of the embedded essential curves on a torus. Recall that once we choose a basic for $H_1(T^2)$, the embedded essential curves on $T^2$ are in one to one correspondence with $\mathbb{Q}\cup\infty.$ 

The Farey graph is constructed in the following way. See Figure~\ref{fig:farey}. Consider the unit disk in the $xy$-plane. Label the point $(0,1)$ as $0=\frac{0}{1}$
and $(0,-1)$ as $\infty=\frac{1}{0}.$ Connect these two points by a straight line. Now if a point on the boundary of the disk has a positive $x$-coordinate and if it lies between two points labeled $\frac{a}{b}$ and $\frac{c}{d}$ then we label it as $\frac{a+c}{b+d}.$ We call this the ``Farey sum'' of $\frac{a}{b}$ and $\frac{c}{d}$ and denote as $\frac{a}{b}\bigoplus\frac{c}{d}$. Now we connect this point with both $\frac{a}{b}$ and $\frac{c}{d}$ by hyperbolic geodesics (note that, we consider a hyperbolic metric on the interior of the disk). We keep iterating this process until all positive rational numbers are labeled on the boundary of the disk. We do the same thing for all the negative national number by considering $\infty$ as $\frac{-1}{0}.$ We use $\frac{a}{b}\bigoplus k\frac{c}{d}$ to denoted the $k^{th}$ iterated Farey sum i.e. we add $\frac{c}{d}$ to $\frac{a}{b}$, $k$ times. Note that, two embedded curves on the torus with slopes $r$ and $s$ will form a basis if and only if there is exactly an edge between them in the Farey graph. We also introduce the dot product of two rational numbers here $\frac{a}{b}\bigcdot\frac{c}{d}=ad-bc$ as the minimum number of times the curves can intersect.

We have the following well-known lemma, See \cite{Etnyre_Lafountain_Tosun}
\begin{lemma}\label{lem:clockwise_anticlockwise}
Suppose $q/p<-1$. Given $q/p=[a_0,\cdots, a_n],$ let $(q/p)^c=[a_1,\cdots, a_n+1]$ and $(q/p)^a=[a_0,\cdots, a_{n-1}].$ There will be an edge in the Farey graph between each pair of numbers $q/p$, $(q/p)^c,$ and $(q/p)^a.$ Moreover, $(q/p)^c$ will be fathest clockwise point from $q/p$ that is larger than $q/p$ with an edge to $q/p$, while $(q/p)^a$ will be the farthest anti-clockwise point from $q/p$ that is less than $q/p$ with an edge to $q/p.$
    
\end{lemma}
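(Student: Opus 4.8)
The plan is to realize the three rationals $q/p$, $(q/p)^c$, $(q/p)^a$ as the three vertices of a single ideal triangle of the Farey tessellation, with $q/p$ the middle vertex, and then read off the ``farthest'' properties from the fact that two edges of the Farey graph never cross. Throughout I would use the edge criterion coming from the dot product $\frac{a}{b}\bigcdot\frac{c}{d}=ad-bc$: two reduced fractions span a Farey edge if and only if this equals $\pm1$. To compute, I would set up the convergents of the negative continued fraction via the matrices $A_i=\matrixs{a_i}{-1}{1}{0}$, so that $A_0A_1\cdots A_m=\matrixs{h_m}{-h_{m-1}}{k_m}{-k_{m-1}}$ records the numerators and denominators $h_m/k_m=[a_0,\ldots,a_m]$ through $h_m=a_mh_{m-1}-h_{m-2}$ and $k_m=a_mk_{m-1}-k_{m-2}$. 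Since $\det A_i=1$, the product has determinant $1$, giving the basic identity $h_mk_{m-1}-h_{m-1}k_m=\pm1$ for every $m$. In this language $(q/p)^a$ is the penultimate convergent $h_{n-1}/k_{n-1}$, while $(q/p)^c$ is the last convergent with its final entry raised by one, with representative $(h_n+h_{n-1},\,k_n+k_{n-1})$.

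Next I would verify the three edges and the betweenness, all reducing to the identity above. The edge $q/p\sim(q/p)^a$ is immediate, being consecutive convergents. For $q/p\sim(q/p)^c$ one finds $h_n(k_n+k_{n-1})-(h_n+h_{n-1})k_n=h_nk_{n-1}-h_{n-1}k_n=\pm1$, and for $(q/p)^a\sim(q/p)^c$ one finds $h_{n-1}(k_n+k_{n-1})-(h_n+h_{n-1})k_{n-1}=h_{n-1}k_n-h_nk_{n-1}=\mp1$; hence all three pairs span edges. The ordering $(q/p)^a<q/p<(q/p)^c$ then follows from the monotonicity of $t\mapsto[a_0,\ldots,a_{n-1},t]$, whose direction is fixed by the constant sign of $h_{n-1}k_{n-2}-h_{n-2}k_{n-1}$. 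Because three pairwise-adjacent vertices of the Farey graph bound an ideal triangle (non-crossing of edges forces a $3$-clique to enclose no further vertices, so it is a face), the triple $\{(q/p)^a,\,q/p,\,(q/p)^c\}$ is such a triangle, and $q/p$ being the middle vertex, the other two are exactly its two Farey parents.

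The last step promotes ``parent'' to ``farthest neighbor.'' If some Farey neighbor $s$ of $q/p$ satisfied $s>(q/p)^c$, then on the boundary circle the chord from $q/p$ to $s$ would interleave with, hence cross, the chord from $(q/p)^a$ to $(q/p)^c$, which is impossible for Farey edges; so $(q/p)^c$ is the largest neighbor of $q/p$, and symmetrically $(q/p)^a$ is the smallest. Finally I would translate through the orientation convention of the disk model: for $r<-1$ the boundary labels increase as one moves clockwise along the lower-left arc from $\infty$ toward $-1$, so ``largest neighbor'' becomes ``farthest clockwise point larger than $q/p$'' and ``smallest neighbor'' becomes ``farthest anticlockwise point smaller than $q/p$,'' which is the assertion.

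The step I expect to be the genuine obstacle is the sign bookkeeping in the negative-continued-fraction convergents: the recursion produces representatives with possibly negative denominators, so the naive mediant relation $q/p=(q/p)^a\bigoplus(q/p)^c$ holds only after normalizing to positive denominators, and conflating the two is an easy error (the \emph{signed} vectors instead satisfy $(q/p)^c=q/p+(q/p)^a$). Routing everything through the determinant identity and the non-crossing characterization, rather than through a signed mediant, is what keeps the argument clean; a secondary point needing care is pinning down the clockwise orientation precisely enough to match the two directional claims verbatim.
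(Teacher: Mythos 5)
Your argument is correct, but there is nothing in the paper to compare it against: the paper states this lemma as ``well-known'' and simply cites Etnyre--LaFountain--Tosun for it, giving no proof. Your write-up supplies a complete, self-contained argument, and the route you take (convergent matrices $\matrixs{a_i}{-1}{1}{0}$, the determinant identity $h_mk_{m-1}-h_{m-1}k_m=\pm1$ as the edge criterion, the $3$-clique-is-a-face observation, and the non-crossing of Farey chords to upgrade ``neighbor'' to ``farthest neighbor'') is the standard one and is sound. Two small remarks. First, the lemma as printed contains a typo --- $(q/p)^c$ should be $[a_0,\cdots,a_n+1]$, not $[a_1,\cdots,a_n+1]$, as the paper itself writes later in Section~\ref{sec:general_result} --- and you have silently (and correctly) proved the intended statement. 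Second, your closing caveat about signed representatives is well placed: with the recursion $k_m=a_mk_{m-1}-k_{m-2}$ and $a_i\le -2$ the denominators alternate in sign (e.g.\ $[-3,-2]$ gives $5/(-2)=-5/2$), so the identity $(q/p)^c=(h_n+h_{n-1})/(k_n+k_{n-1})$ is a statement about the signed vectors and only becomes the Farey mediant after normalization; routing the edge verification entirely through the determinant, as you do, correctly sidesteps this. The only step you leave slightly compressed is the monotonicity of $t\mapsto[a_0,\ldots,a_{n-1},t]$ used to place $(q/p)^a<q/p<(q/p)^c$; the derivative computation gives $ad-bc=-(h_{n-1}k_{n-2}-h_{n-2}k_{n-1})=1>0$, and one should note that $t=a_n$ and $t=a_n+1$ lie on the same side of the pole $t=k_{n-2}/k_{n-1}$ (guaranteed by $a_i\le-2$), but this is routine and does not affect the validity of the proof.
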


In the above lemma if $a_n+1=-1$, then we consider $[a_0,\cdots, a_n+1]$ to be $[a_0,\cdots a_{n-1}+1].$ Also, if $q/p$ is a negative integer then $(q/p)^a=\infty.$

A path in the Farey graph is a sequence of elements $p_1,p_2,\cdots, p_k$ in $\mathbb{Q}\cup\infty$ moving clockwise such that each $p_i$ is connected to $p_{i+1}$ by an edge in the Farey graph, for $i<k.$ Let $P$ be the minimal path in the Farey graph that starts at $p_1$ and goes clockwise to $p_k. $ We say a path in the Farey graph is a {\it decorated path} if all of the edges are decorated by a $+$ or $-.$ We call a path in the Farey graph {\it a continued fraction block} if there is a change of basis such that the path goes from $0$ clockwise to $n$ for some positive $n$. We say two choices of signs on the continued fraction block are related by {\it shuffling} if the number of $+$ signs in the continued fraction blocks are the same.

Next we introduce a notation that we frequently use. Given two numbers $r,s$ in $\mathbb{Q}\cup\infty,$ we denote by $[r,s]$ all the numbers that are clockwise to $r$ and anti-clockwise to $s$ in the Farey graph.

\begin{center}
    \begin{figure}[htbp]
\centering
\def\svgwidth{0.99\columnwidth}
\labellist
\small\hair 2pt
  \pinlabel {${0}$} at 236 520
\pinlabel {$\infty$} at 236 20
\pinlabel{$1$} at 470 250
\pinlabel{$-1$} at 0 250
\pinlabel{$\frac{1}{2}$} at 420 420
\pinlabel{$2$} at 420 100
\pinlabel{$3$} at 350 30
\pinlabel {$\frac{3}{2}$} at 470 170
\pinlabel{$\frac{2}{3}$} at 470 330
\pinlabel{$\frac{1}{3}$} at 355 480
\pinlabel {$-\frac{1}{3}$} at 100 480
\pinlabel {$-\frac{2}{3}$} at 0 330
\pinlabel {$-\frac{1}{2}$} at 40 420
\pinlabel {$-\frac{3}{2}$} at 10 170
\pinlabel {$-2$} at 50 100
\pinlabel {$-3$} at 120 30
\endlabellist
  \includegraphics[scale=0.5]{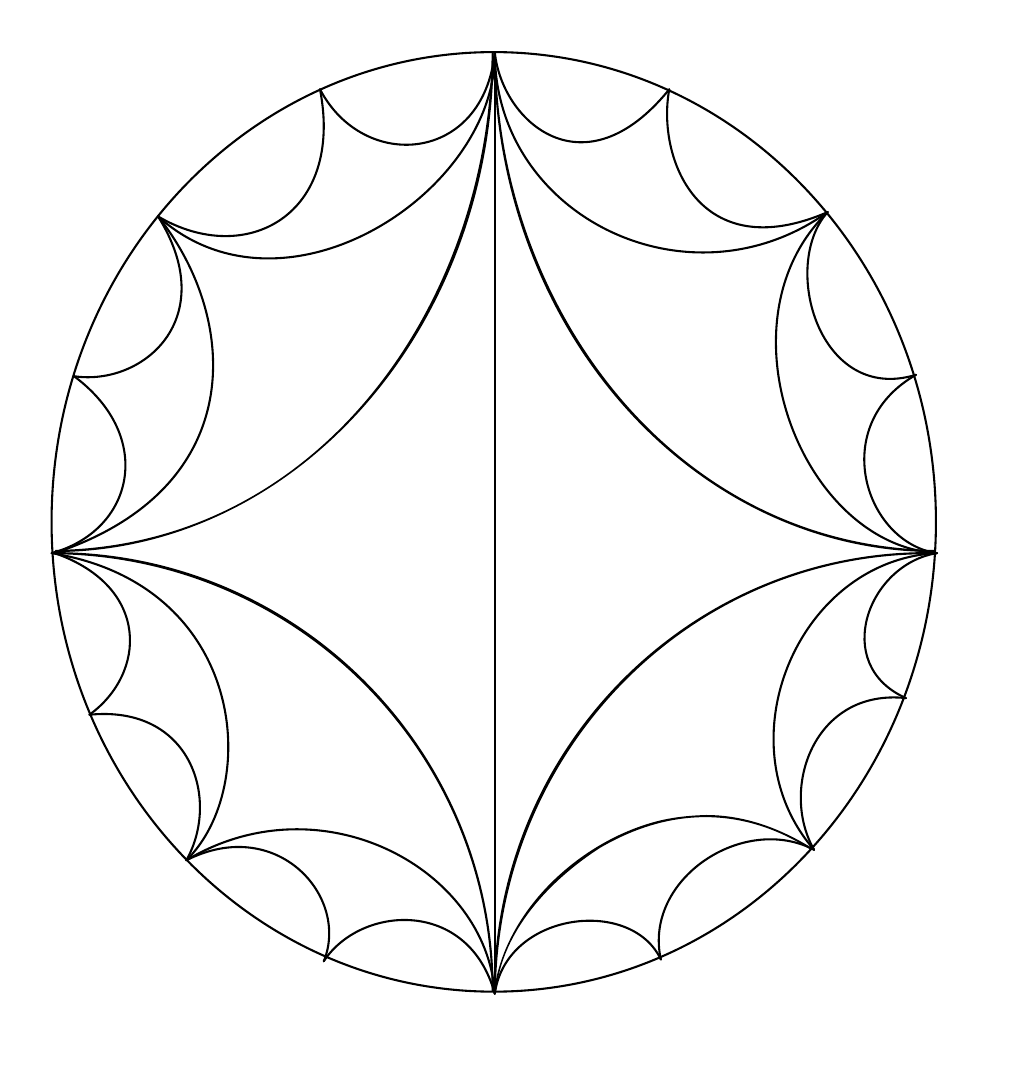}
  	\caption{The Farey graph. }
  	\label{fig:farey}
  \end{figure} 

\end{center}

\subsection{Tight contact structures on solid tori, $T^2\times[0,1]$ and $L(p,q)$} \label{ssec:tight_classification} Here we briefly recall the classiifcation of tight contact structures on $T^2\times [0,1], S^1\times D^2$ and lens spaces due to Giroux \cite{Giroux} and Honda \cite{Honda_tight}. We discuss the classiifcation results along the lines of Honda.
\subsubsection{Contact structures on $T^2\times[0,1]$} Consider a contact structure $\xi$ on $T^2\times [0,1]$ that has convex boundary with dividing curves of slope $s_i$ on $T^2\times\{i\}$ for $i=0,1$. We also assume that there are exactly two dividing curves on each of the boundary component. We say $\xi$ is {\it minimally twisting} if any convex torus in $T^2\times [0,1]$ parallel to the boundary has dividing slope in $[s_0,s_1]$. We denote the minimally twisting contact structures, up to isotopy, on $T^2\times[0,1]$ with the above boundary conditions as $\text{Tight}^{min}(T^2\times[0,1];s_0,s_1).$ Giroux \cite{Giroux} and Honda \cite{Honda_tight} classified tight contact structures in $\text{Tight}^{min}(T^2\times[0,1];s_0,s_1)$ establishing the following result.

\begin{theorem}\label{thm:tight_thickened_tori}
Each decorated minimal path in the Farey graph from $s_0$ clockwise to $s_1$ describes an element of $\text{Tight}^{min}(T^2\times[0,1];s_0,s_1).$ Two such decorated paths will describe the same contact structure if and only if the decorations differ by shuffling in the continued fraction blocks.
    
\end{theorem}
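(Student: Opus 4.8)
The plan is to reduce everything to the analysis of \emph{basic slices}: minimally twisting tight structures on $T^2\times[0,1]$ whose two boundary slopes $s,s'$ are joined by a single edge of the Farey graph, equivalently $s\bigcdot s'=\pm 1$ so that the corresponding curves form a basis of $H_1(T^2)$. The first step is to classify these building blocks. Using convex surface theory and the Legendrian realization principle, I would first show that a minimally twisting tight structure on such a slice admits no interior convex torus carrying a genuinely new dividing slope, and then that there are exactly two such structures up to isotopy, distinguished by the sign of the relative Euler class $e(\xi)\in H^2(T^2\times[0,1],\partial;\mathbb{Z})$. These two signs are precisely the $\pm$ decorations on the edge.

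For \textbf{existence}, given a decorated minimal path $s_0=r_0,r_1,\dots,r_k=s_1$, I would stack the corresponding basic slices $T^2\times[\tfrac{i}{k},\tfrac{i+1}{k}]$, putting the prescribed sign on the edge $r_i r_{i+1}$, to build a candidate $\xi$ with the correct convex boundary. Tightness of the stack is the point requiring care, since a union of tight pieces need not be tight: I would argue it either by realizing the stack inside a known tight (e.g.\ Stein fillable) model, or by a state-traversal and perturbation argument ruling out an overtwisted disk, using that each slice is tight and that the dividing slopes advance monotonically along the minimal path so that no Giroux torsion is created.

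For the \textbf{upper bound}, take an arbitrary $\xi\in\text{Tight}^{min}(T^2\times[0,1];s_0,s_1)$. After making the boundary convex with two dividing curves of slopes $s_0,s_1$, minimal twisting forces every parallel convex torus to have slope in $[s_0,s_1]$; I would then locate convex tori realizing the slopes along a minimal Farey path and apply the standard factoring lemma to cut $\xi$ into basic slices. This exhibits $\xi$ as a stacking along some minimal path with some choice of signs, so the number of tight structures is at most the number of decorated minimal paths.

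The remaining, and hardest, step is to determine exactly which decorated paths coincide, which splits into two claims. First, the \emph{shuffling relation}: within a single continued fraction block the basic slices may be reordered, so that only the number of $+$'s (equivalently $-$'s) in the block matters. I would prove this by the standard trick of finding, between two consecutive basic slices in a block, an alternate interior convex torus that re-factors the pair with its signs swapped, which is exactly where the combinatorics of continued fraction blocks and the Farey graph enters. Second, the \emph{distinctness} of shuffling-inequivalent decorations: I would compute the Poincar\'e dual of the relative Euler class of a stacked structure as a signed sum over its basic slices, show it is invariant under shuffling, and show it otherwise records the signs up to that equivalence. The genuine obstacle is this last bookkeeping, namely establishing that the relative Euler class, together with the boundary data, is a \emph{complete} invariant modulo shuffling: one must check both that it is well defined independently of the chosen factorization and that it is sharp enough to separate every pair of paths not related by shuffling.
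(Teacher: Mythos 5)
This statement is quoted in the paper as a known background result of Giroux and Honda --- the paper offers no proof of it (it is stated immediately after ``Giroux \cite{Giroux} and Honda \cite{Honda_tight} classified tight contact structures \dots establishing the following result''), so there is no internal argument to compare yours against. What you have written is, in outline, exactly Honda's original proof: classify basic slices, stack them along a decorated minimal path for existence, factor an arbitrary minimally twisting tight structure into basic slices for the upper bound, and then identify the shuffle relation and separate the remaining classes by the relative Euler class. Two places in your sketch carry essentially all of the real content and should not be treated as routine: (i) tightness of the stacked slices (and the fact that a basic slice admits \emph{exactly} two, not fewer, tight structures) cannot be obtained by cut-and-paste alone --- in Honda's argument it comes from embedding the stack into a Stein fillable lens space and invoking fillable~$\Rightarrow$~tight, so a holomorphic/filling input enters here, as you anticipate; and (ii) the completeness of the relative Euler class modulo shuffling is a genuine computation with the difference vectors $r_i\bigominus r_{i-1}$ along a minimal clockwise path (within a continued fraction block these vectors coincide after a change of basis, and across blocks one must check the signed sums remain distinguishable in $H_1(T^2)\cong\mathbb{Z}^2$), which you correctly flag as the bookkeeping obstacle rather than dismissing. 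As a reconstruction of the cited theorem's proof, the proposal is sound.
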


Notice that, if $s_0$ and $s_1$ has exactly one edge between them then there are exactly two tight contact structures in $\text{Tight}^{min}(T^2\times[0,1];s_0,s_1).$ These are called {\it basic slices} $B_\pm$ and the correspondence of the theorem can be understood via stacking basic slices according to the decoration in the path that describes the contact structure. The two different contact structures on a basic slice can be distinguished by their relative Euler classes, we call them {\it positive} and {\it negative } basic slices.

The relative Euler class of the contact structure in $\text{Tight}^{min}(T^2\times[0,1];s_0,s_1)$ can be computed as follows: let $s_0=r_0, s_1, \cdots, r_k=s_1$ be the vertices of the minimal path from $s_0$ to $s_1$ amd $\epsilon_i$ be the sign of the basic slice with boundary slopes $r_{i-1}$ and $r_i.$ Then the relative Euler class of the contact structure associated with this path  is Poincar\'{e} dual to the curve
\[\sum_{i=1}^{k}\epsilon_i(r_i\bigominus r_{i-1})\]
where $\frac{a}{b}\bigominus\frac{c}{d}=\frac{a-c}{b-d}.$

Next we discuss shortening of a non-minimal path. Suppose $P$ is a non-minimal path in the Farey graph. So there will be a vertex $v$ in $P$ such that there is an edge between its neighboring vertices $v'$ and $v''$. We can shorten this path by removing $v$ and the two edges and replacing it by the edge between $v'$ and $v''$ in the path. We call the new path $P'$. If $P$ were a decorated path, then we call the shortening to get $P'$ {\it inconsistent} if the edges removed had different signs and {\it consistent} if the signs are the same. When the shortening is consistent, then we can decorate the new edge in $P'$ by the sign of the removed edges, thus $P'$ is the new decorated path.

For any decorated path in the Farey graph, even non-minimal, one can construct a contact structure on $T^2\times[0,1]$ by stacking basic slices. The following result due to Honda tells us when these path will lead to tight contact structures.
\begin{theorem}\cite{Honda_tight}\label{thm:tight} Let $\xi$ be a contact structure on $T^2\times[0,1]$ described by a non-minimal decorated path $P$ in the Farey graph from $s_0$ to $s_1$. Then $\xi$ is tight if and only if one may consistently shorten the path to a shortest path from $s_0$ to $s_1$.
    
\end{theorem}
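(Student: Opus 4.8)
The plan is to reduce the theorem to two local statements about a single Farey triangle, i.e.\ a triple of pairwise adjacent vertices $v', v, v''$, since every shortening move replaces the two edges $v'v$ and $vv''$ of such a triangle by the single edge $v'v''$. Writing $\xi$ as a stack of basic slices along $P$, these two edges contribute two consecutive basic slices with signs $\epsilon_1,\epsilon_2\in\{+,-\}$, and I would first establish two facts about the sub-$T^2\times[0,1]$ lying between the $v'$ and $v''$ boundaries: \textbf{(A) Combination:} if $\epsilon_1=\epsilon_2=\epsilon$, this piece is contact isotopic rel boundary to the single basic slice from $v'$ to $v''$ of sign $\epsilon$; and \textbf{(B) Overtwisting:} if $\epsilon_1\neq\epsilon_2$, this piece is overtwisted.

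For (A), I would make the middle torus (slope $v$) convex with Legendrian rulings of slope $v'$, cut along it, round the edges, and identify the resulting single $T^2\times[0,1]$ with boundary slopes $v',v''$ by computing its relative Euler class through the formula $\sum\epsilon_i(r_i\bigominus r_{i-1})$. Since $v'$ and $v''$ are adjacent, $\text{Tight}^{min}(T^2\times[0,1];v',v'')$ consists of exactly the two basic slices $B_\pm$, and the Euler class computation pins down the sign-$\epsilon$ one. For (B), the opposite signs correspond to attaching a bypass and then an oppositely-signed bypass along parallel ruling curves of the middle torus; the standard bypass analysis produces, after isotoping the middle torus, a convex torus whose dividing slope escapes the interval $[v',v'']$, and pushing further yields a convex torus carrying a homotopically trivial dividing curve, which bounds an overtwisted disk by the Giroux criterion. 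Establishing (B) cleanly is the main obstacle: it is the sole source of overtwistedness in the theorem and must be extracted from the explicit convex-surface model rather than from the bookkeeping of Theorem~\ref{thm:tight_thickened_tori}, which only sees minimal paths.

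Granting (A) and (B), the two directions follow. If $P$ can be consistently shortened to a minimal path $P_{\min}$, then by (A) each shortening is a contact isotopy, so $\xi$ is isotopic to the structure built from $P_{\min}$, which is tight by Theorem~\ref{thm:tight_thickened_tori}. For the converse I argue by contraposition: if no sequence of consistent shortenings reaches a minimal path, I would show $P$ must contain a shortenable triangle carrying opposite signs. Here a shortenable vertex with adjacent neighbors is exactly a triangle, and an inconsistent one is exactly the configuration of (B); the delicate step is to rule out that \emph{shuffling} of signs within continued fraction blocks could convert every such triangle into a consistent one, for which I would induct on the length of $P$, peeling off an outermost triangle at each stage and using the classification of minimal paths to track the admissible sign data. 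Once an unavoidable inconsistent triangle is located, (B) exhibits an overtwisted disk inside $\xi$, so $\xi$ is overtwisted. Thus the combinatorial claim that failure to reduce consistently forces an inconsistent triangle, together with (B), forms the technical heart of the argument.
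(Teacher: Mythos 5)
First, a point of comparison: the paper does not prove this statement at all --- it is quoted from \cite{Honda_tight} as background, with no argument given --- so there is no in-paper proof to measure yours against. Judged on its own terms, your outline does follow the shape of Honda's original argument (localize to a single Farey triangle, treat same-sign and mixed-sign gluings separately, then handle the combinatorics of shuffling), but it has genuine gaps at exactly the three places where the content of the theorem lives.

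In (A), identifying the union of two same-sign basic slices with the single basic slice by ``computing its relative Euler class'' and then invoking the classification of $\text{Tight}^{min}(T^2\times[0,1];v',v'')$ is circular: that classification applies only once you know the union is tight and minimally twisting, and tightness of a gluing of two tight pieces is precisely what is at stake. What one can get cheaply is the \emph{factoring} direction (cutting the sign-$\epsilon$ basic slice from $v'$ to $v''$ along a convex torus of slope $v$ yields two sign-$\epsilon$ slices, by additivity of the relative Euler class); upgrading this to the statement that \emph{every} gluing of two sign-$\epsilon$ slices is that basic slice requires an embedding into a known tight model or a uniqueness-of-gluing argument, and some such step is unavoidable. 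In (B), you correctly flag the overtwistedness of the mixed-sign triangle as the main obstacle but do not supply it; note also that the tempting shortcut of observing that the relative Euler class of the union matches neither basic slice is insufficient, since one must additionally exclude the possibility that the union is tight but not minimally twisting (tight thickened tori with extra twisting do exist --- that is exactly what Giroux torsion layers are). Finally, the converse direction rests on the combinatorial lemma that if no sequence of consistent shortenings reaches a minimal path, then some inconsistent triangle survives every shuffling; you name this step but give no argument, and it is not a routine induction, since shortenings and shufflings interleave and change which vertices are shortenable. So the proposal is a correct road map through Honda's proof rather than a proof: claims (A) and (B) and the shuffling lemma all still need to be established.
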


Next we discuss convex Giroux torsion. Consider $\xi=\ker(\sin 2\pi z dx+\cos 2\pi z dy)$ on $T^2\times \mathbb{R}$ where $(x,y)$ is the co-ordinate on $T^2$ and $z$ is the co-ordinate on $\mathbb{R}.$ Consider the region $T^2\times[0,k]$ for $k\in\frac{1}{2}\mathbb{N}$ and notice that the contact planes twist $k$ times as $z$ goes from $0$ to $k$. We can perturb $T^2\times\{0\}$ and $T^2\times\{k\}$ so that they become convex with two dividing curves of slope $0$. Let $\xi^k$ denotes the resulting contact structure on $T^2\times[0,1]$ after we identify $T^2\times [0,1]$ with $T^2\times[0,k]$.
For $k\in\frac{1}{2}\mathbb{N}$, we call $(T^2\times[0,1],\xi^k)$ a {\it convex $k$ Giroux torsion layer} and if it embeds into a contact manifold $(M,\xi)$, we say $(M,\xi)$ has {\it convex $k$ Giroux torsion}. We say $(M,\xi)$ has exactly $k$ Giroux torsion if one can embed $(T^2\times [0,1],\xi^k)$ into $(M,\xi)$ but cannot embed $(T^2\times[0,1], \xi^{k+\frac{1}{2}})$ in $(M,\xi).$ On the other hand $(M,\xi)$ has no convex Giroux torsion or zero Giroux torsion if $(T^2\times[0,1], \xi^k)$ does not embed in $(M,\xi)$ for any $k\in\frac{1}{2}\mathbb{N}.$

\begin{remark}There is a difference between a convex Giroux torsion layer and a Giroux torsion layer. When we talk about Giroux torsion in general we do not require the boundary tori to be convex, we just need them to be pre-Lagrangian. But as one can always find a convex tori with same boundary slope inside the layer, every Giroux torsion layer contains a convex Giroux torsion layer of same slope (converse need not be true). In this paper when we refer to Giroux torsion we will be talking about convex Giroux torsion layer.
 
\end{remark}
\subsubsection{Contact structures on solid tori} Now we discuss tight contact structures on solid tori. While we will usually use ``standard'' coordinates on a solid torus so that the meridional slope is $-\infty=\infty,$ but it will be convenient sometimes to use different coordinates. The notation is as follows: Consider $T^2\times[0,1]$ and choose a basis for $H_1(T^2)$ so that we may denote curves on $T^2$ by rational numbers $\cup\infty.$ Given $r\in\mathbb{Q}\cup\infty$ we can foliate $T^2\times\{0\}$ by curves of slope $r$. Let $S_r$ be the result of collapsing each leaf in the foliation of $T^2\times\{0\}$ to a point. One can check that $S_r$ is a solid torus with meridional slope $r$. We say $S_r$ is a {\it solid torus with lower meridian $r$.} One the other hand one could foliate $T^2\times\{1\}$ similarly by curves of slope $r$ and collapse the curves to obtain $S^r.$ This is called a {\it solid torus with upper meridian $r$}. Note that the standard solid torus is $S_\infty$ and unless otherwise stated this is the solid torus we are talking about.

\subsubsection{Contact structures on $L(p,q)$} The lens space $L(p,q)$ can be defined as $-p/q$ surgery on the unknot in $S^3$. Equivalently, we can think of $L(p,q)$ as $T^2\times[0,1]$ with the curves of slope $-p/q$ collaped on $T^2\times\{0\}$ and curves of slope $0$ collaped on $T^2\times\{1\}$. We can further describe $L(p,q)$ as a result of gluing $S_{-p/q}$, a solid torus with lower meridian $-p/q$ to another solid torus $S^0$, a solid torus of upper meridian $0.$ Giroux \cite{Giroux} and Honda \cite{Honda_tight} classified the tight contact structure on $L(p,q)$ as follows:
\begin{theorem}
Let $P$ be a minimal path in the Farey graph $-p/q$ clockwise to $0.$ The tight contact structures on $L(p,q)$ are in one-to-one correspondence with assignments of signs to all but the first and last edge in $P$ up tp shuffling in continued fraction blocks.
    
\end{theorem}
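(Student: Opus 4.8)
The plan is to reduce the statement to the classification of tight contact structures on $T^2\times[0,1]$ (Theorem~\ref{thm:tight_thickened_tori}) together with Honda's classification of tight contact structures on solid tori, using the decomposition $L(p,q)=S_{-p/q}\cup_T S^0$ along the Heegaard torus $T$. Recall that $L(p,q)$ is $T^2\times[0,1]$ with the leaves of slope $-p/q$ collapsed on $T^2\times\{0\}$ and the leaves of slope $0$ collapsed on $T^2\times\{1\}$, so a tight contact structure $\xi$ restricts to a tight contact structure on each of the two solid tori. First I would Legendrian realize the cores of $S_{-p/q}$ and $S^0$, remove standard neighborhoods of them, and make the boundary tori convex; the complement is a copy of $T^2\times[0,1]$ on which $\xi$ is tight, and after an isotopy I may assume it is minimally twisting (a Giroux torsion layer cannot embed without violating tightness of $L(p,q)$).

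Applying Theorem~\ref{thm:tight_thickened_tori} to this $T^2\times[0,1]$ produces a decorated minimal path $P$ in the Farey graph running clockwise from $-p/q$ to $0$, well defined up to shuffling inside continued fraction blocks. The key remaining point is to understand the two extreme edges. The edge of $P$ adjacent to $-p/q$ is the basic slice that, after collapsing, is glued onto the standard neighborhood of the core of $S_{-p/q}$; by Honda's solid-torus classification the sign of the basic slice adjacent to the meridional (collapsing) slope carries no information, since the two choices are carried to one another by the diffeomorphism of the solid torus acting by $\pm1$ on the meridian. The same applies to the edge adjacent to $0$ inside $S^0$. Hence exactly the first and last edges of $P$ are undetermined, and tight contact structures correspond to sign assignments on all the remaining edges, up to shuffling in continued fraction blocks, which gives the upper bound.

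For the reverse direction I would, for each admissible sign assignment on the interior edges, build a contact structure by stacking the corresponding basic slices on $T^2\times[0,1]$ and capping off with the unique tight filling of $S_{-p/q}$ and of $S^0$ having the prescribed boundary data; collapsing recovers a contact structure on $L(p,q)$. The main obstacle is twofold: first, showing that every structure so produced is tight, and second, showing that inequivalent interior sign classes yield non-isotopic structures, so that the correspondence is a genuine bijection. For tightness I would exhibit these structures as Stein fillable, arising from Legendrian surgery diagrams read off from $-p/q=[a_0,\dots,a_n]$, or invoke Honda's criterion via consistent shortening to the minimal path. For distinctness, the relative Euler class computation recalled after Theorem~\ref{thm:tight_thickened_tori}, refined by the shuffling relation, separates the classes, and a matching count against an independent lower bound (for instance the number $|(a_0+1)(a_1+1)\cdots(a_n+1)|$ of Stein fillable structures) confirms that no further identifications occur. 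I expect this distinctness-and-completeness step, rather than the decomposition itself, to be the crux.
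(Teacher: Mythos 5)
This statement is quoted in Section~\ref{ssec:tight_classification} as a background result of Giroux and Honda; the paper gives no proof of it, so there is nothing internal to compare against. Your outline does follow the standard Giroux--Honda argument (split $L(p,q)$ along the Heegaard torus, Legendrian realize the cores, classify the complementary $T^2\times[0,1]$ by decorated paths, then argue the two extremal edges carry no information), and you correctly identify the two genuine difficulties: why the first and last signs are not invariants, and why the remaining sign classes are realized and pairwise distinct.

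However, your justification of the key step is wrong. You claim the sign of the basic slice adjacent to the meridional slope is killed by ``the diffeomorphism of the solid torus acting by $\pm 1$ on the meridian.'' A map acting by $-1$ on the meridian and $+1$ on the longitude has determinant $-1$ on $H_1(\partial(S^1\times D^2))$, so it reverses orientation and does not preserve the set of positive contact structures; the map that does extend over the solid torus, $-I$, reverses \emph{every} sign in the factorization simultaneously and only realizes conjugation of the contact structure, which is exactly the coarse equivalence one is not allowed to quotient by here. The correct mechanism is the stabilization argument: a standard neighborhood $N(L)$ of a Legendrian core contains both $N(S_+(L))$ and $N(S_-(L))$, with $N(L)\setminus N(S_\pm(L))$ a basic slice of the corresponding sign, so one and the same tight solid torus admits factorizations whose innermost basic slice has either sign; this is what makes the edge adjacent to each meridian sign-free. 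Separately, in the distinctness step the absolute Euler class does not in general separate all the classes; the standard resolution (which you gesture at) is to match the upper bound from the convex decomposition against the count $|(a_0+1)\cdots(a_n+1)|$ of Stein fillable structures produced by Legendrian surgery on a stabilized chain, distinguished pairwise by finer (relative/$\spinc$) data. With the meridian step repaired as above, your outline is the standard proof.
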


\subsection{Knots in contact maniolds}\label{ssec:Legendrian} A {\it standard neighborhood} of a Legendrian knot $L$ in $(M,\xi)$ is a solid torus $N(L)$ on which $\xi$ is tight and $\partial N(L)$ is convex with two dividing curves of slope $\tb(L).$ One may arrange via a small isotopy that the characteristic foliation consists of two lines of singularities called {\it Legendrian divides} and curves of slope $s\neq\tb(L).$ These are called {\it ruling curves.} Convesely, given a solid torus in a contact manifold $(M,\xi)$ on which $\xi$ is tight and having convex boundary and two dividing curves of slope $n$, then there exist a Legendrian knot $L$ with $\tb=n$ and $S$ being its standard neighborhood.

Given a Legendrian knot $L$, one can stabilize it in two ways, $S_\pm(L)$. Note that the standard neighborhood $N(S_\pm(L))$ of $S_\pm(L)$ is in $N(L)$ and $N(L)\setminus N(S_\pm(L))$is a basic slice where the sign of the basic slice depends on the sign of the stabilization. This basic slice has boundary slopes $\tb(L)$ and $\tb(L)-1$.
\subsection{Rationally null-homologous knots}\label{ssec:Rational}
For deatils on rationally null-homologous knots an interested reader is referred to \cite{Etnyre_rational}. Recall we say a knot $K$ is {\it rationally null-homologous} in $M$ if it is trivial in $H_1(M,\mathbb{Q}).$ In other words there exists a minimal integer $r$ such that $rK$ is trivial in $H_1(M,\mathbb{Z}).$ We call $r$ the order of $K$. One can build a {\it rational Seifert surface } $\Sigma$ for $K$ as explained in \cite{Etnyre_rational}. Note that, in general $\Sigma$ might not have connected boundary but it would not concern us in this paper. We assume $K$ is oriented and this will induce an orientation on $\Sigma.$

If $K'$ is another oriented knot in $M$ that is disjoint from $K$ then we define the {\it rational linking number} to be 
\[\text{lk}_{\mathbb{Q}}(K,K')=\frac{1}{r}\Sigma'\cdot K\]
where $\Sigma'\cdot K$ denotes the algebraic intersection of $\Sigma'$ and $K$. There is some ambiguity in this definition but that will not be an issue here.

Now let $K$ be a Legendrian knot in $(M,\xi)$. As mentioned in section~\ref{ssec:Legendrian} $K$ has a standard neighborhood with convex boundary and $2$ dividing curves determined by the contact framing. Let $K'$ be one of the Legendrian divides on $\partial N(K)$. We define the {\it rational Thurston-Bennequien invariant} of $K$ to be 
\[\tbr(K)=\text{lk}_{\mathbb{Q}}(K,K')\]

Next we define the rational rotation number following \cite{Etnyre_rational}. We consider the immersion $i\colon\Sigma\rightarrow M$ that is an embedding on the interior of $\Sigma$ and an $r$ to $1$ mapping of $\partial\Sigma$. We can now consider $i^*\xi$ as an oriented $\mathbb{R}^2$ bundle over $\Sigma$. Since $\Sigma$ is a surface with boundary we know that $i^*\xi$ can be trivialized as $i^*\xi=\mathbb{R}^2\times\Sigma$. Let $v$ be a non-zero vector field tangent to $\partial\Sigma$ inducing the orientation of $K$. Using the trivialization of $i^*\xi$ we can consider $v$ as  a map from $\partial\Sigma$ to $\mathbb{R}^2$. Now we can define the {\it rational rotation number} as follow:
\[\rotr(K)=\frac{1}{r}\text{winding}(v,\mathbb{R}^2)\]
Note that $\text{winding}(v,\mathbb{R}^2)$ is equivalent to the obstruction to extending $v$ to a non-zero vector field over $i^*\xi$ and thus can be interpreted as the relative Euler number.

\subsection{Hopf links in lens spaces} \label{ssec:Hopf} Here we will give the basics that we need in the next section. We will consider $L(p,q)$ as the union of two solid tori $V_1$ and $V_2$. We can think of $V_1$ as a solid torus with lower meridian $-p/q$ and $V_2$ a solid torus with upper meridian $0.$ If we fix two dividing curves of slope $s$ on $\partial V_1=\partial V_2,$ then a contact structure on $L(p,q)$ is determined by taking a tight contact structure on $V_1$ in $\text{Tight}(S_{-p/q},s)$ and another tight contact structure in $V_2$ in $\text{Tight}(S^0;s)$ and gluing them together. There is no guarentee that this gluing will give us a tight contact structure (in fact most of the time it will be overtwisted). 

We will think of the core of $V_i$ as the rational unknots $K_i$. If we are looking for Legendrian realizations of $K_i$ then we can take any slope $s$ with an edge to $-p/q$ (respectively $0$) in the Farey graph. We call the Legendrian representatives $L_i$. Now a tight contact structure on $V_i$ with convex boundary having two dividing curves of slope $s$ is a standard neighborhood of $L_1$. If we are looking for non-loose representative of $L_1\sqcup L_2$, then the link  complement must be tight. By link complement we mean the complement of the standard neighborhood of $L_1$ and $L_2$ which is diffeomorphic to $T^2\times I.$ So non-loose representaives of $L_1\sqcup  L_2$ are in one-to -one corrpondence with $\text{Tight}^{min}(T^2\times I; s_1,s_2)$ where $s_i$ are the dividing slopes of the standard neighborhood of $V_i$. Note that, $s_1$ can be either in $(-p/q, (-p/q)^c]$ or $[(-p/q)^a,-p/q)$ (recall our notation from section~\ref{ssec:Farey}). On the other hand, $s_2$ can be in either $(0, 0^c]$ or in $[(0)^a, 0)$.

We refer to all the slopes $s$ as {\it large slope} for $L_1$ 
if we start from the meridional slope of $V_1$ and traverse clockwise to slope $s$, we pass the Seifert slope for $L_1$. Otherwise, we call them {\it small slopes}. Thus for $L_1$ the slopes $s\in [(-p/q)^a,-p/q)$ are large slopes and $s\in(p/q,(-p/q)^c]$ are small slopes. On the other hand, for $L_2$ the slopes $s\in(0,0^c]$ are large slopes and $s\in[0^a,0)$ are small slopes (as $L_2$ is the core of $V_2$ and we are following the orientation of $T^2$ as the boundary of $V_1$, the convention is opposite in this case. In this case we need to start from the meridional slope and have to traverse counter clockwise to $s$).

Now when gluing $V_1$ with slope $s_1$, a tight $T^2\times [0,1]$ with slopes $s_1$ and $s_2$ and $V_2$ with slope $s_2$ we will have
four possibilities. If $s_1\in(-p/q, (-p/q)^c]$ and $s_2\in[0^a,0)$ the contact structure on $L(p,q)$ is clearly tight. In all three other cases that is when we consider a combination of small and large slopes or large slopes only, the contact structure on $L(p,q)$ will be overtwisted as in all the cases either the torus $V_1$ will contain a boundary parallel convex torus of slope $0$ or $V_2$ will contain a boundary parallel torus of slope $-p/q$ or both. Thus after gluing $V_1,V_2$ and the thickened torus the resulting contact structure will contain an overtwisted disk (a Legendrian divide of slope $0$ will bound a disk in $V_2$ and a Legendrian divide of slope $-p/q$ will bound an overtwisted disk in $V_1$). As we are considering non-loose Hopf links we will only consider those three cases.
  
\subsection{Computation of classical invariants in $L(p,q)$}\label{ssec:computation_preliminary} In this section we explain how to compute the classical invariants of the Hopf link components and the Euler class of the contact structure in $L(p,q)$.

\begin{lemma}
	Suppose $r=-\frac{p}{q}$ is the meridional slope of $V_1$ and $L_1$ being the core of $V_1$. Then $\tbr(L_1)=- \frac{1}{p}|s\cdot 0|$, if $s$ is a small slope and  $\tb(L_1)=\frac{1}{p}|s\cdot 0|$ if $s$ is large slope. Here $|s\cdot 0|$ denotes the number of intersection between $s$ and $0$. The same is true for $L_2$.
\end{lemma}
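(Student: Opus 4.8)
The plan is to reduce the computation of $\tbr(L_1)$ to an intersection number on the boundary torus, using the definitions of Section~\ref{ssec:Rational} together with the solid-torus picture of Section~\ref{ssec:Hopf}. Since $L_1$ is the core of $V_1$ and its complement in $L(p,q)$ is the solid torus $V_2$ with meridional slope $0$, I would first pin down the data of $L_1$ as a rationally null-homologous knot. From the gluing description $H_1(L(p,q)) = \Z^2/\langle \mu_1,\mu_2\rangle$, with $\mu_1$ of slope $-p/q$ and $\mu_2$ of slope $0$, one sees $H_1(L(p,q)) \cong \Z/p$ and that $[L_1]$ is a generator, so $L_1$ has order exactly $p$. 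The key geometric observation is that the only slope on $\partial V_1 = \partial V_2$ that is rationally null-homologous in the complement $V_2$ is the meridian $0$ of $V_2$; hence the rational longitude (Seifert slope) of $L_1$ is $0$, and a rational Seifert surface $\Sigma_1$ for $L_1$ may be taken to be the meridian disk $D_2$ of $V_2$, whose boundary is a single slope-$0$ curve wrapping $p$ times longitudinally around $L_1$.

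With this in hand I would compute $\tbr(L_1)=\mathrm{lk}_{\Q}(L_1,K')$ directly, where $K'$ is a Legendrian divide on $\partial N(L_1)$, i.e.\ a curve of the dividing slope $s$. By symmetry of the rational linking number, $\mathrm{lk}_{\Q}(L_1,K') = \tfrac1p(\Sigma_1\cdot K') = \tfrac1p(D_2\cdot K')$. As $K'$ lies on $\partial V_2$ and $D_2$ is the meridian disk of $V_2$, the intersection $D_2\cdot K'$ equals the longitudinal winding of $K'$ in $V_2$, which on the torus is the algebraic intersection of the meridian (slope $0$) with the curve of slope $s$; in absolute value this is exactly $|s\cdot 0|$. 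This already gives $|\tbr(L_1)| = \tfrac1p|s\cdot 0|$. The statement for $L_2$ then follows by interchanging the two tori: the complement of $L_2$ is $V_1$ with meridian $-p/q$, so the Seifert slope of $L_2$ is $-p/q$ and the same computation yields $\tbr(L_2)=\pm\tfrac1p|s\cdot(-p/q)|$, the direct analogue of the stated formula.

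The remaining, and genuinely delicate, step is the sign. Orienting $K'$ parallel to $L_1$ and $\Sigma_1$ by the induced orientation, the sign of $\Sigma_1\cdot K'$ is the sign of the algebraic intersection of the oriented slopes $s$ and $0$, which flips precisely when $s$ crosses the Seifert slope $0$ in the Farey graph. This is exactly the dichotomy built into the definition of large versus small slopes in Section~\ref{ssec:Hopf}: traversing clockwise from the meridian $-p/q$ to a large slope passes the Seifert slope $0$, whereas reaching a small slope does not. Thus small slopes inherit the sign on the meridian side (negative), giving $\tbr(L_1)=-\tfrac1p|s\cdot 0|$, while large slopes pick up the opposite sign, giving $\tbr(L_1)=+\tfrac1p|s\cdot 0|$. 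I expect this orientation bookkeeping to be the main obstacle: one must fix consistent orientations of $L_1$, $K'$, $\Sigma_1$, and of the slopes, and check them against the clockwise convention of the Farey graph. A small example such as $L(2,1)$, where the small slope $-1$ gives $\tbr=-\tfrac12$ and the large slope $\infty$ gives $\tbr=+\tfrac12$, serves as a consistency check; once the sign is settled, the $L_2$ statement follows by the symmetry above.
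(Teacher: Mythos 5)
Your proposal is correct and follows essentially the same route as the paper: both take the meridian disk of the complementary solid torus as the rational Seifert surface of order $p$, compute $\tbr$ as $\frac{1}{p}$ times the intersection of a Legendrian divide of slope $s$ with the Seifert slope, and settle the sign via the small/large slope dichotomy (contact framing below or above the Seifert framing). Your version merely spells out the homological and orientation bookkeeping that the paper leaves implicit.
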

\begin{proof}
Notice that the meridional disk for $L_2$ will provide a Seifert surface for $L_1$ which has order $p$ in $L(p,q).$ So according to the definition the rational Thurston--Bennequin invariant is the rational linking number of $L_1$ with a contact push-off of $L_1$. In other words we count the number of intersection of a Legendrian divide on $\partial V_1$ with the Seifert slope and divide it by the order. The sign of $\tbr$ is determined by whether the slope in small or large. Note that, Thurston--Bennequin number measures the difference between the contact framing and the Seifert framing. As for small slope the contact framing is less that the Seifert framing, $\tbr$ is negative. On the other hand, as by definition the large slope is always greater than the Seifert framing and $\tbr$ must be positive.

\end{proof}
\subsubsection{Computation of rotation number} Next we explain how to compute the rotation number of the components.  If $e$ is the Euler class of the contact structure on $T^2\times I$ and $D_i$ be the meridional disk of $L_i$, then the rational rotation number of the components are given by $\rotr(L_1)=\frac{1}{p}e(D_2)$ and $\rotr(L_2)=\frac{1}{p}e(D_1)$. For a similar explanation check \cite{Chatterjee-Etnyre-Min_2025_existence}. To calculate, the rotation number for the components $L_1\sqcup L_2$ having standard neighborhood with dividing slopes $s_{k_1}$ and $s_{k_2}$  we follow :
\begin{enumerate}
    \item We find the (decorated) shortest path $\{s_{k_1}=p_0, p_1, \dots, p_n=s_{k_2}\}$ between $s_{k_1}$ clockwise to $s_{k_2}$ on the Farey graph. If $s_{k_1}, s_{k_2}$ both belong to the negative region we continue. (In our case, $s_{k_1}<0$.) If $s_{k_2}> 0$, we apply a diffeomorphism of $T^2$ such that the new slope $<0$. In fact this can be done by applying the change of basis matrix \[\begin{pmatrix}
        1 &0\\
        -1&1\\
    \end{pmatrix} \]

 Thus we have a path in the negative region only.
 \item As mentioned before the relative Euler class of the contact structure associated with this path is Poincar\'e dual to the curve \[\sum_{i=0}^n\epsilon_i(p_i\bigominus p_{i-1})\] where $\epsilon=\pm 1$ depending on the decoration. 
 \item Now we evaluate this on the Seifert disk. In other words, for $L_1$ we evaluate on slope $0$ and for $L_2$ we use slope $-\frac{p}{q}.$ If we used the change of basis matrix from before we need to make the appropriate changes to the Seifert slope as well.
 \item Now $\rotr(L_1)=\frac{1}{p}e(D_2)$ and $\rotr(L_2)=\frac{1}{p} e(D_1)$.
    
\end{enumerate}

\begin{remark}
    One might notice a little discrepancy of the Euler class when evaluating on $D_1$ or $D_2$ in $L(p,q)$ for $q>1$. The reason behind this is unlike $L(p,1)$ case there does not exist any isotopy taking $D_1$ to $D_2$. Thus when we give the Euler class of $L(p,q)$ where the Hopf link lives, we are evaluating the Poincar\'{e} dual on $D_2$.
\end{remark}
\begin{remark}
    Note that, one could also use the method from \cite{Chatterjee_Kegel} to calculate the Euler classes of the contact structures. The algorithm from \cite{Ding_Geiges_Stipcisz} can be used to calculate the $d_3$ invariants from the explicit contact surgery diagrams of the candidates given in Section~\ref{sec:contactsurgery}.
\end{remark}

\subsubsection{Stabilization of Hopf link} For large slopes $s_{k_1}$ of $L_1$, $s_{k_1}$ must be in $[(-p/q)^a,-p/q)$ and has an edge to $-p/q$. So $s_{k_1}=(-p/q)^a\bigoplus k_1(-p/q)$. For large slopes of $L_2$, similarly we will have $s_{k_2}=1/k_2.$ Suppose $L_1$ be the Legendrian unknot corresponding to the tight contact structure on $V_1$ with dividing slope $s_{k_1}$ where $k_1>0$, $L_2$ be the Legendrian unknot corresponding to the tight contact structure on $V_2$ with dividing slope $s_{k_2}$ with $k_2>0$ and some tight contact structure on $T^2\times[0,1]$ in $\text{Tight}^{min}(T^2\times[0,1]; s_{k_1}, s_{k_2})$. Now inside each $V_i$ there are two solid tori $S_i^\pm$ (smoothly isotopic to $V_i$) with convex boundary and $2$ dividing curves of slope $s_{k_i-1}.$ Thus $V_i\setminus S_i^\pm$ is a Basic slice $B_i^\pm$ for $i=1,2$. Now $S_i^\pm$ is a standard neighborhood of a stabilization of $L_i$. Note that, as $V_1$ and $V_2$ have opposite orientation, if we fix the orientation of $V_1$ and work with it then while $B_1^\pm$corresponds to a positive and negative stabilization of $L_1$,  $B_2^\pm$ will correspond to a negative and positive stabilization of $L_2$. But as we are considering positive Hopf links $L_1$ and $L_1$ have opposite orientations. Thus, in our case $B_2^\pm$ will actually correspond to a positive and negative stabilization of $L_2$.

If $L_1$ and $L_2$ are both stabilized, then the complement is given by a contact structure on $B_1^\pm\cup T^2\times I\cup B_2^\pm$ where we attach $B_1$ on the front face of $T_0$ and attach $B_2$ on the back face $T_1$. Thus the path in the complement is extended by the two edges describing $B_1$ and $B_2$. This new path might not be minimal. If the path can be consistently shortened then we will see the link is still non-loose otherwise loose. Now note that, it is possible that $S_\pm(L_1)$ becomes loose as the complement of $S_\pm(L_1)$ i.e. $B_\pm\cup T^2\times [0,1]\cup V_1$ is overtwisted (there will be an inconsistent shortening) but the link remains loose. The same is true for $L_2$ as well. Next we show that any stabilization of $L_1$ and $L_2$ with dividing slopes $s_{k_1}, s_{k_2}$ where 
$k_1=k_2=0$ are loose.

\begin{lemma}
    \label{lem:stabilization_loose} Suppose $L_i^{s_{k_i}}$ is a non-loose rational unknot that is the core of $V_i$ with dividing slope $s_{k_i}$ where $s_{k_i}$ is a large slope. Then any stabilization of $L_i^{s_0}$ is loose for $i=1,2$.
\end{lemma}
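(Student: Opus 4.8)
The plan is to show that $L_i^{s_0}$, the non-loose core of $V_i$ with large dividing slope $s_0$, becomes loose after a single stabilization of either sign. Recall from Section~\ref{ssec:Legendrian} that $N(S_\pm(L_i^{s_0}))$ sits inside $N(L_i^{s_0}) = V_i$ as a solid torus with dividing slope $s_{-1}$, and that the complementary region $V_i \setminus N(S_\pm(L_i^{s_0}))$ is a basic slice $B^\pm$ with boundary slopes $s_{-1}$ and $s_0$. The key observation is that $s_0$ is the \emph{smallest} large slope, namely $s_0 = (-p/q)^a$ for $L_1$ (and $s_0 = \infty$, i.e. $k_2=0$, for $L_2$), so the slope $s_{-1}$ obtained by stabilizing once lands on the opposite side: it becomes a small slope, or equivalently the path from the meridian out to $s_{-1}$ now passes the Seifert slope in the wrong direction. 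Concretely, $s_0 = (-p/q)^a$ is the farthest anti-clockwise point from $-p/q$ with an edge to it (Lemma~\ref{lem:clockwise_anticlockwise}), so one more stabilization step forces the dividing slope to cross over, producing a configuration containing a boundary-parallel convex torus of meridional slope.

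First I would write down explicitly the complement of $S_\pm(L_i^{s_0})$ inside $L(p,q)$ as the glued manifold $B^\pm \cup (T^2\times I) \cup V_{3-i}$, where the $T^2\times I$ is the original link-complement piece and $V_{3-i}$ is the other Heegaard solid torus; the point is to view this whole region as a contact structure on a thickened torus (after collapsing the meridian of $V_{3-i}$) described by a decorated path in the Farey graph. The path is the original minimal path from $s_0$ to the meridional slope of $V_{3-i}$, now \emph{extended} by the extra edge from $s_{-1}$ to $s_0$ coming from the stabilizing basic slice $B^\pm$. Second, I would invoke the geometry of Lemma~\ref{lem:clockwise_anticlockwise}: because $s_0$ is already the extremal slope with an edge to the meridian $-p/q$ (for $L_1$), the new vertex $s_{-1}$ together with its neighbors creates a vertex in the path whose two neighboring vertices are joined by an edge, so the path is non-minimal and can be shortened. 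The decisive step is to check that this shortening is \emph{inconsistent} in the sense of the paragraph preceding Theorem~\ref{thm:tight}, i.e. the two edges being removed carry opposite signs.

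Third, I would apply Theorem~\ref{thm:tight}: a non-minimal decorated path gives a tight contact structure if and only if it can be consistently shortened to a shortest path. Since the extension by $B^\pm$ forces an inconsistent shortening at the vertex $s_0$ regardless of the sign of the stabilization (the sign $B^+$ versus $B^-$ only flips which of $S_+$ or $S_-$ we are discussing, but in either case the adjacent original edge has the opposite decoration because the original tight structure had no such crossing), the complement is overtwisted, and hence the stabilized knot is loose. Equivalently, one sees directly that the overtwisted disk arises from a Legendrian divide of meridional slope bounding a compressing disk in $V_{3-i}$ (or in $V_i$), exactly as in the four-possibilities discussion of Section~\ref{ssec:Hopf}.

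The main obstacle I expect is the bookkeeping of signs: I must verify that for \emph{both} choices of stabilization sign the removed edges are genuinely inconsistent, rather than one sign remaining consistent (which would leave that stabilization non-loose). This requires pinning down the sign of the original basic slice adjacent to $s_0$ in the minimal path and comparing it to the sign of $B^\pm$; the subtlety is that $s_0 = (-p/q)^a$ is the extremal edge, so there is in fact no original decorated edge \emph{beyond} $s_0$ to compare against, and one must instead argue that crossing past the extremal slope $s_0$ into $s_{-1}$ necessarily reverses the twisting direction and so cannot be absorbed consistently for either sign. Once that extremal-slope argument is made precise using Lemma~\ref{lem:clockwise_anticlockwise}, the conclusion for $i=1,2$ follows uniformly, with the $L_2$ case handled identically after the usual orientation-reversal convention on $V_2$ noted in Section~\ref{ssec:Hopf}.
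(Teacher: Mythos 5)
Your proposal contains the paper's actual argument, but only as an aside, while the main line you develop has a gap that you yourself flag and do not close. The paper's proof is short and sign-free: stabilizing $L_1^{s_0}$ (with $s_0=(-p/q)^a$, the extremal large slope) inserts a basic slice $B_\pm$ with boundary slopes $(-p/q)^c$ and $(-p/q)^a$, and this basic slice --- \emph{regardless of its sign} --- contains boundary-parallel convex tori of every slope in the clockwise arc from $(-p/q)^c$ to $(-p/q)^a$, an arc which passes through $0$, the meridional slope of $V_2$. A Legendrian divide on that slope-$0$ torus bounds a meridional disk in $V_2$ with zero twisting, i.e.\ an overtwisted disk, so $B_\pm\cup V_2$ is overtwisted and $S_\pm(L_1^{s_0})$ is loose. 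You state exactly this twice (``producing a configuration containing a boundary-parallel convex torus of meridional slope'' and the ``Equivalently, one sees directly\dots'' sentence), but you relegate it to a remark and instead build the proof on Theorem~\ref{thm:tight} and inconsistent shortening.

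That main line does not work as stated, and the obstacle you name in your last paragraph is the fatal one: since $s_0=(-p/q)^a$ is extremal, there is no adjacent decorated edge in the original path against which the new edge could be inconsistent, so there is nothing to shorten inconsistently. More fundamentally, a sign-mismatch mechanism is the wrong explanation here: an inconsistency argument would generically leave one sign of stabilization tight, whereas the conclusion of the lemma is that \emph{both} signs loosen the knot, a sign-independent phenomenon. The correct mechanism is not that a non-minimal path shortens inconsistently but that the extended ``path'' overshoots the meridian of the complementary solid torus --- equivalently, the stabilizing basic slice itself contains a convex torus of the meridional slope. Your proposed patch (``crossing past the extremal slope necessarily reverses the twisting direction and so cannot be absorbed consistently for either sign'') is, once made precise, just this meridional-torus argument in disguise; the decorated-path formalism of Theorem~\ref{thm:tight}, which is stated for $T^2\times[0,1]$ with fixed boundary slopes, is not the right tool for detecting it. Promote your parenthetical observation to the proof and drop the shortening machinery.
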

\begin{proof}
    We prove it for $L_1$ and $L_2$ will follow similarly. Note that, $s_0$ large corresponds to the slope $(-p/q)^a.$ The complement of $L_1$ is given by $V_2$. When we stabilize $L_1^{s_0}$ we add a basic slice $B_\pm$ of slopes $\{(-p/q)^c, (p/q)^a\}$ to $V_1$. Notice that,  $B_\pm$ contains boundary parallel convex tori of  any slope between $(-p/q)^c$ clockwise to $(-p/q)^a$. In particular it contains a convex torus of slope $0$. Any Legendrian divide on this torus when included in $V_2$ will contribute to an overtwisted disk. Thus, $B_\pm\cup V_2$ will be overtwisted and $S_\pm(L_1)$ will be loose.
\end{proof}

\section{Classification result for $L(p,1)$ and $L(2n+1,2)$}

\label{sec:examples}

\begin{proof}[Proof of Theorem \ref{thm:L(p,1)}] Note that, as mentioned before to have an overtwisted lens space we need to consider three cases. (1) small slope $\cup$ large slope (2) large slope $\cup$ small slope and $(3)$ large slope $\cup$ large slope. For $L(p,1)$, we have $(-p)^c=-p+1$ and $(-p)^a=\infty$
\subsection{$\text{small slope} \cup \text{ large slope}$} \label{ssec:L(p,1)case1}In this case the dividing slopes on the standard neighborhood of $L_1$ and $L_2$ are given by $s_{k_1}=
-\frac{p(k_1+1)-1}{k_1+1}$ and $s_{k_2}=(\frac{1}{0})\oplus k_2(\frac{0}{1})=\frac{1}{k_2}$. 
From now on when we mention a link complement we mean the complement of the standard neighbourhoods of $L_1$ and $L_2$.
Note that, the link complement with these dividing curves is 
 $T^2\times [0,1]$ with boundary slopes $s_{k_i}$. 
For $k_1=0$, a minimum path from $s_{k_1}$ to $s_{k_2}$ consists of a continued fraction block of length $k_1+1$ from $s_{k_1}$ to $\infty$ and thus corresponds to exactly $k_1+2$ tight contact structures. For $k_2=1$, a path from $s_{k_1}$ to $s_{k_2}$ consists of a continued fraction block of length $k_1$ from $s_{k_1}$ to $-p+1$, then a continued fraction block of length $p$ from $-p+1$ to $1$. This corresponds to $(k_1+1)(p+1)$ tight contact structures. Finally, for $k_2>1$
a minimum path from $s_{k_1}$ to $s_{k_2}$ consists of the following sub-paths, a continued fraction block of length $k_1$ from $s_{k_1}$ to $-p+1$, followed by a continued fraction block of length $p-1$ from $-p+1$ to $0$ and finally a jump from $-1$ to $\frac{1}{k_2}$. Thus decorations on this path correspond to $2(k_1+1)p$ tight contact structures.

Next we calculate the classical invariants of the components. Note that, by $L_i^{{k_i}}$ we mean the Legendrian unknot $L_i$ whose standard neighborhood has dividing slope $s_{k_i}.$

One could easily calculate the rational Thurston--Bennequin invariant of the components to be \[\tb_{\mathbb{Q}}(L_1^{{k_1}})=-(k_1+1)+\frac{1}{p}\] and  \[\tb_{\mathbb{Q}}(L_1^{{k_2}})=k_2+\frac{1}{p}.\]  To calculate the rotation numbers we follow the technique mentioned in Section \ref{sec:background}. For $k_1\geq 0$ and $k_2=0$, the rotation numbers of the components are $\rotr(L_1^{k_1})=(-k_1-1+2n)$ and $\rotr(L_2^0)=0$ where $0\leq n\leq k_1+1.$ The Euler class is 0. (Here $n$ counts the number of negative basic slices in the path from $s_{k_1}$ to $\infty$.) For $k_2=1$, the rotation numbers are $\rotr(L_1)^{k_1})=(-k_1+2n)-\frac{(p-2m)}{p}$, $\rotr(L_2^1)=-1+\frac{2m}{p}$, and the Euler class is given by $-p+2m$  where $0\leq m\leq p, \ 0\leq n\leq k_1.$ In the later case $n$ counts the number of negative basic slice in the path $s_{k_1}$ to $-p+1$ and $m$ counts the negative signs in the following path from $-p+1$ to $1$.

It is easy to see that $L_1^{{k_1}}$ is loose (the complement of $L^k_1$ in this $L(p,1)$ contains a convex torus of slope $0$, a Legendrian divide on this torus contributes to an overtwisted disk in $V_2$) and $L_2^{{k_2}}$ is non-loose for all $k_1, k_2\geq 0$. Next we will see how the stabilization of $L_2^{{k_2}}$ are related with one another. We will start with $L_2^0$. Any stabilization of $L_1^0$ will be loose as shown in Lemma~\ref{lem:stabilization_loose}. The complement of $L_2^{{0}}$ consists of a union of continued fraction blocks. If all the basic slices in each of the continued fraction block is positive, then  a positive stabilization of $L_2$ will add a half-Giroux torsion in the complement of the link and $L_1^{{k_1}}\sqcup S_+(L_2^0)$ is still non-loose. Same is true if there are only negative slices and we negatively stabilize $L_2^0$. In all the other cases, any stabilization of $L_2^0$ will loosen the link.

For $k_2=1$, we have $(k+1)(p+1)$ non-loose representative and the path in the complement 
 of $L_2^1$ is given by a solid torus with meridional slope $-p$ and dividing slope $s_{k_2}=1$. We can also think of this as solid torus with boundary slope $s_{k_1}$ and meridional slope $-p$ union a $T^2\times I$ with boundary slopes $\{s_{k_1}, -p+1\}$ and another $T^2\times I$ with slopes $\{-p+1,1\}$. Stabilizing $L_2^1$ corresponds to adding a basic slice $B_\pm$ with boundary slopes $\{1, \infty\}$. Now adding this basic slice will allow us to shorten the path $\{1,0,\cdots, -p+1\}$ to $\{-p+1,\infty\}$. If the path from ${-p+1} $ to $1$ consists of only positive (resp. negative) signs then we can consistently shortening the path if the basic slice is positive (resp. negative). If the path has a mixed sign, any stabilization will lead to an inconsistent shortening as we can shuffle the signs in a continued fraction block and arrange it so that the shortening is always inconsistent. If we denote the component $L_2^1$ with having the complementary path $\{-p+1,\cdots, 1\}$ decorated with all positive signs $(L_2^1)^{all, +}$ then $S_+((L_2^1)^{all, +})$ is non-loose. Similarly, $S_-((L_2^1)^{all, -}$) is non-loose. These two forms the base of the back and forward slashes. If the path has any mix of signs and we denote the number of negative slices in this path by $m$ then  $S_\pm((L_2^1)^m)$ for $m=1,\cdots, p-1$ will be loose and thus $(L_2^1)^m$ will be the bases of $p-1$ $V$s. For $k_2=2$, the complement of $L_1^{k_1}\sqcup L_2^2$ can be subdivided into a path from $-p+1$ to $0$ and then a jump from $0$ to $\frac{1}{2}$. Suppose $i$ denotes the number of negative slices in the first part of the path, $i=0,1,\cdots p-1$. We denote $L_2^{2}$ corresponding to this path as $(L_2^{2})^{i,\pm}$ where $\pm$ corresponds to the sign of the jump. Clearly, $S_\pm((L_2^{2})^{i,\pm})$ for $i=0,\cdots p-1$ are all non-loose as this leads to a consistent shortening. We see that  $S_+((L_2^{2})^{0,+})$ and  $S_-((L_2^{2})^{p-1,-})$ coincide with $(L_2^1)^{all,+}$ and $(L_2^1)^{all,-}$ and are part of the back and forward slashes. $S_\mp((L_2^2)^{m,\pm})$ are loose. A similar analysis works for $k_2>2$ too.


 Now putting all these together we have the following mountain range for $L_2$ fixing $L_1$, one non-loose $V$ based at $(0,\frac{1}{p})$ and $p-1$ non-loose $V$s based  at $(-1+\frac{2m}{p}, 1+\frac{1}{p})$ for $m=1,2,\cdots, p-1$.

  \begin{center}
     \begin{table}[htbp]
         \centering
         \begin{tabular}{||c||c||c||c||}
         \hline
              $k_1$ &$k_2$ &$T^2\times I$ & number \\
              \hline
              0 &0 & $I$-invariant &1\\
              \hline
              0 &1 &$(\infty,1)$ &2\\
              \hline
              0 &2 &$(\infty, \frac{1}{2})$ & 3\\
              \hline
              0 &>2 &$(\infty,\frac{1}{k_2})$ &4\\
              \hline
              1 &0 &$(-p-1, \infty)$ &2\\
              \hline
              1 &1 &$(-p-1,1)$ &$p+3$\\
              \hline
              1 &>1 &$(s_{k_1}, \frac{1}{k_1})$ & $2(p+2)$\\
              \hline
              2 &0 &$(s_{k_1}, \infty)$ &3\\
              \hline
              >2 &0 &$(s_{k_1},\infty)$ & 4\\
              \hline
              >1 &1 &$(s_{k_1}, 1)$ & $2(p+2)$\\
              \hline
              >1 &>1 &$(s_{k_1},\frac{1}{k_2})$ &$4(p+1)$\\
              \hline
         \end{tabular}
         \caption{Number of non-loose Hopf links in $L(p,1)$ for large $\cup$ large.}
         \label{tab:table_L(p,1)}
     \end{table}
 \end{center}
\subsection{Large slope $\cup$ small slope}\label{ssec:L(p,1)case2} This case corresponds to dividing slopes $s_{k_1}=
-\frac{k_1p+1}{k_1}$ and \\$s_{k_2}=-\frac{1}{k_2}$. Except for $k_1=k_2=0$, any path from $s_{k_1}$ to $s_{k_2}$ will consist of a path from $s_{k_1}$ to $-1$ and then followed by a continued fraction block of length $k_2-1$ from $-1$ to $-\frac{1}{k_2}$. We call the first part $P_1$ and the second part of the path $P_2$. Decorations on $P_2$ correspond to $k_2$ tight contact structures. For $k_1=0$ and $k_2\geq 1$, there is a continued fraction block of length $k_2$ between $\infty$ and $-\frac{1}{k_2}$. This gives us $k_2+1$ tight contact structures. On the other hand, for $k_1=1$ and $k_2\geq 1$  this path consists of a continued fraction block of length $p$ followed by $P_2$. This will correspond to $(p+1)k_2$ tight contact structures. For $k_1\geq 1$, there is a single jump from $s_{k_1}$ to $-p$, followed by a continued fraction block of length $p-1$ from $-p$ to $-1$ and then $P_2$. Decorations on this path correspond to $2pk_2$ contact structures.  These can 
 also be obtained by switching the role of $L_1$ and $L_2$ as this case the Hopf link is symmetric.

 The rational Thurston-Bennequin invariant of the components are given by \[\tb_{\mathbb{Q}}(L_1^{s_{k_1}})=k_1+\frac{1}{p}\] and  \[\tb_{\mathbb{Q}}(L_1^{s_{k_2}})=-k_2+\frac{1}{p}.\] The rational rotation number are same as the previous case after switching the components.
 \begin{remark}
     We need to be careful here as there is a shift in the Thurston Bennequin number. After swiching $L_1$ and $L_2$ one needs to replace $ k_1$ by $k_2$ but $k_2$ by $k_1+1$ to adjust the range. 
 \end{remark}

It is easy to see that in this case $L_1$ is non-loose and $L_2$ is loose for every $k_1\geq 0, k_2\geq 1$. Stabilizations of $L_1$ will be exactly same as stabilizations $L_2$ as before after switching their roles.  A detailed analysis of the stabilizations of $L_1$ is given in Theorem 1.1 \cite{Chatterjee-Etnyre-Min_2025_existence} as well.

 \subsection{Case 3 large slope $\cup$ large slope} The dividing slopes in this case are given by $s_{k_1}=-\frac{k_1p+1}{k_1}$ and $s_{k_2}=\frac{1}{k_2}$. 

 For $k_1=k_2=0$, we will see that the complementary $T^2\times I$ has both boundary slopes $\infty$. As $T^2\times I$ is minimally twisting, this must be an $I$-invariant contact structure and thus gives a unique tight contact structure. This is the unique Hopf link with both components non-loose. The Thurston--Bennequin invariants are given by $\tbr(L_i)=\frac{1}{p}$ and rotation number $\rotr(L_i)=0.$ The Euler class of this contact structure is zero as well. As before any stabilization of $L_1^0$ or $L_2^0$ is loose. $S_\pm(L_1^0)\sqcup L_2^0$ is non-loose and corresponds to the $2$ Hopf links (loose/non-loose pairing) from case~\ref{ssec:L(p,1)case1}. In fact, $S_\pm^k(L_1)\sqcup L_2^0$ is isotopic to the Hopf link $L_1^{k-1}\sqcup L_2^0$ from case~\ref{ssec:L(p,1)case1} for $k\geq 1$ where $S_\pm^k(L_1)$ denotes the $k-$fold stabilization. Similarly, $L_1^0\sqcup S_\pm^k(L_2)$ is isotopic to the non-loose Hopf link $L_1^{0}\sqcup L_2^{k}$ from case~\ref{ssec:L(p,1)case2} for $k\geq 1.$ Finally, $S_+(L_1^0)\sqcup S_+(L_2^0)$ is still non-loose but has a half Giroux torsion in the complement. The same is true when both the components are negatively stabilized. The mixed stabilization of the components (i.e. if $L_1$ is stabilized postively and $L_2$ is stabilized negatively) will loosen the link.

\begin{table}
    \centering
    \begin{tabular}{|c|c|c|c|c|}
    \hline
    \hline
         $k_1$ &$k_2$&$\rotr(L_1)$&$\rot(L_2)$ &\text{range of}\ $m$ \\
         \hline
         $0$&$1$ &$\mp\frac{2}{p}$ &$\mp\frac{p+2}{p}$ &\\
         \hline
         $0$ &$2$ &$\frac{2m-2}{p}$&$\frac{(2m-2)(p+1)}{p}$ &$0\leq m\leq 2$\\
         \hline
         $0$ &$>1$ &$\mp\frac{1}{p}\mp\frac{1}{p}$& $\mp\frac{p+1}{p}\mp((k_2-1)+\frac{1}{p})$&\\
         \hline
         $1$ &$1$ &$-\frac{p+2-2m}{p}$ &$-\frac{p+2-2m}{p}$&$0\leq m\leq p+2$\\
         \hline
         $1$ &$>1$ &$-\frac{p+1-2m}{p}\mp\frac{1}{p}$&$-\frac{p+1-2m}{p}\mp\frac{(1+p(k_2-1))}{p}$ &$0\leq m\leq p+1$\\
         \hline
         $>1$ &$>1$ &$\mp((k_1-1)+\frac{1}{p})-\frac{p-2m}{p}\mp\frac{1}{p}$&$\mp\frac{1}{p}-\frac{p-2m}{p}\mp((k_2-1)+\frac{1}{p})$&$0\leq m\leq p$\\
         \hline
         \hline
         
    \end{tabular}
    \caption{Rotation numbers for the components with $\tb(L_i)=k_i+\frac{1}{p}$}
    \label{tab:rotationL(p,1)}
\end{table}
 
 Now for $k_2=0$ and $k_1=1$, there is exactly one edge between $-p-1$ and $\infty$ and thus corresponds to $2$
 tight contact structure. For $k_2=0, k_1>1$, the path will consist of one jump from $s_{k_1}$ to $-p$ followed by another jump from $-p$ to $\infty$.
 For $k_1=2$ this is a continued fraction block of length $2$ so we will have $3$ tight contact structures in the complement. For $k_1>2$, this is not a continued fraction block, thus corresponds to $4$ tight contact structures. It is easy to check that both the components are loose. 

 The rational Thurston--Bennequin invariants of the components are given by \[\tbr(L_i)=k_i+\frac{1}{p}\]  and the rational rotation numbers are given in Table~\ref{tab:rotationL(p,1)}.

Next we analyze the loose mountain range for $L_1$ fixing $L_2^0$. The two loose $L_1^0$ are denoted as $(L_1^0)^\pm$ respectively. $S_\pm((L_1^0)^\pm)\sqcup (L_2^0)^{\pm}$ are non-loose but will have a half Giroux torsion in the complement. But $S_\pm((L_1^0)^\mp)\sqcup L_2^0$ are loose. For $k_1=1,$ we denote the corresponding $L_1^1$ as $(L_1^1)^{0}, (L_1^1)^{1} $ and $(L_1^1)^{2}$ where the superscript denotes the number of negative slices in the path from $s_{k_1}$ to $\infty$ which is a continued fraction block of length $2$. $S_+(((L_1^1)^{0}))$ and $S_-((L_1^1)^{2})$ coincide with $(L_1^0)^\pm$ respectively. On the other hand, $S_\pm(((L_1^1)^{+-}))\sqcup L_2^0$ will be loose. Finally, for $k_1=2,$ we have $2$ jumps which is not a continued fraction block. So, we denote the corresponding $L_1^2$ as $(L_1^2)^{ij}$,  where $i$ is the sign of the basic slice from $s_{k_1}$ to ${-p}$ and $j$ is the sign of basic slice from $-p$ to $\infty$.  $S_+((L_1^2)^{++})\sqcup L_2^0$ and  $S_-((L_1^2)^{--})\sqcup L_2^0$  are contactomorphic to $(L_1^1)^{0}\sqcup L_2^0$ and $(L_1^1)^{2}\sqcup L_2^0$ respectively. Moreover, $S_+((L_1^2)^{+-})\sqcup L_2^0$ and $S_-((L_1^2)^{-+})\sqcup L_2^0$ are both contactomorphic to $(L_1^1)^{+-}\sqcup L_2^0.$ So fixing $L_2^0$, putting all these together, we have 1 forward slash, 1 back slash based at $(\frac{p+2}{p}, 1+\frac{1}{p})$ and $(-\frac{p+2}{p}, 1+\frac{1}{p})$ respectively and one $V$ based at $(0, 2+\frac{1}{p}).$ 
 
 A similar analysis of the paths can be done fixing $k_2=1$ and $k_2>1$. The number of non-loose Hopf links are given in Table \ref{tab:table_L(p,1)}. Analyzing the stabilizations as before we see that for fixed $L_2$ with $k_2=1$ we have 1 forward slash, 1 backward slash based at $(\frac{2}{p}, \frac{1}{p})$ and $(-\frac{2}{p}, \frac{1}{p})$ respectively and $p+1$ Vs based at $(-\frac{p+2-2m}{p}, 1+\frac{1}{p})$ where $1\leq m\leq p+1$ for $L_1$. Here $m$ denotes the number of negative basic slice in the length $p+2$ continued fraction block from $-p-1$ to $1$. For fixed $L_2$ with $k_2>1,$ there are $2$ forward slashes based at $(\frac{2}{p},\frac{1}{p}), (0,\frac{1}{p}) $ and $2$ backward slashes based at $(-\frac{2}{p},\frac{1}{p}), (0,\frac{1}{p})$ respectively and $2p$ $V$s based at $(-\frac{p-2m}{p}\mp\frac{1}{p}, 1+\frac{1}{p})$ where $1\leq m\leq p$ for $L_1$. In this case $m$ denotes the number of negative basic slices in the path from $-p-1$ to $0$ and the $\mp$ sign corresponds to the $\pm$ basic slice from $0$ to $\frac{1}{k_2}$. The Euler classes can be computed as described in Section~\ref{sec:background}.

 One could do a similar analysis fixing $L_1$ with $k_1=0,1$ and $k_1>1$ to get the exact same loose mountain range for $L_2.$


\end{proof}
\begin{proof}{Proof of Theorem \ref{thm:L(2n+1,2)}} 
Notice that, for $L(2n+1,2)$, $(-\frac{2n+1}{2})^a=-n-1$ and $(-\frac{2n+1}{2})^c=-n$. Like before, we will subdivide the cases in three subcases and work with them separately.
\subsection{Case 1: small $\cup$ large slope} In this case,  the dividing slopes of the standard neighborhood $L_1$ and $L_2$ are given by $s_{k_1}=-\frac{n(1+2k_1)+k_1}{1+2k_1}$ and $s_{k_2}=\frac{1}{k_2}.$ For $k_2=0$, any path from $s_{k_1}$ to $s_{k_2}=\infty$ consists of a continued fraction block of length $k_1$ from  $s_{k_1}$ to $-n$ and then a jump from $-n$ to  $\infty$ . This corresponds to $2(k_1+1)$ tight contact structures. For $k_2=1$, a path from $s_{k_1}$ to $s_{k_2}=1$ consists of a continued fraction block of length $k_1$ from $s_{k_1}$ to $-n$ followed by another continued fraction block of length $n+1$ from $-n$ to $1$ and thus corresponds to $(n+2)(k_1+1)$ tight contact structures. For $k_2>1$, we have a continued fraction block of length $k_1$ from $s_{k_1}$ to $-n$, followed by a continued fraction block of length $n$ from $-n$ to $0$ and finally an edge from $0$ to $\frac{1}{k_2}.$ Decorations on this path correspond to $2(n+1)(k_1+1)$ tight contact structures. It is easy to see that $L_1$ is loose and $L_2$ is non-loose.

The rational Thurston--Bennequin invariant of the components are given by \[\tb_\mathbb{Q}(L_1^{k_1})=-(k_1+1)+\frac{n+1}{2n+1}\] and  \[\tb_\mathbb{Q}(L_1^{k_2})=k_2+\frac{2}{2n+1}\] One can easily calculate the rotation number for the $2(k_1+1)$ candidates as $\rot_{\mathbb{Q}}(L_1^{k_1})=(-k_1+2m)\mp \frac{n+1}{2n+1}$  where $m=0,1.\cdots k_1$ and $\rot(L_2^0)=\mp\frac{1}{2n+1}$. The rotation numbers for the $(n+2)(k_1+1)$ candidates are $\rot_{\mathbb{Q}}(L_1^{k_1})=(-k_1+2m_1)-\frac{(-n-1+2m_2)}{2n+1}$ and $\rot_{\mathbb{Q}}(L_2^1)=\frac{2(-n-1+2m_2)}{2n+1}$ where $m_1=0,1, \cdots k_1$ and $m_2=0,1,\cdots n+1$. Finally, $\rot_{\mathbb{Q}}(L_1^{k_1})=(-k_1+2m_1)+\frac{(-n+2m_2\mp 1)}{2n+1}$ and $\rot_{\mathbb{Q}}(L_2^{k_2})=\frac{2(-n+2m_2)}{2n+1}\mp(k_2-\frac{2n-1}{2n+1})$ where $m_1=0,1, \cdots k_1$ and $m_2=0,1,\cdots n$ for the $2(n+1)(k_1+1)$ non-loose representatives. Here $m,m_1,m_2$ counts the number of negative signs in the corresponding continued fraction blocks as shown above.

As before $L_1^{k_1}$ is loose and $L_2^{k_2}$ is non-loose for all $k_1, k_2\geq 0$. Any stabilization of $L_2^0$ is loose as observed in other cases. For $L_2^1$, there are $(n+2)(k+1)$ non-loose representatives. Now note that, the complement of these candidates consist of a thickened torus with boundary slope $s_{k_1}$ and $s_{k_2}=1$. We could break this path into a path from $s_{k_1}$ to $-n$ and then from $-n$ to $1$. The last part in a continued fraction block of length $(n+2)$, let us call this $P$. Now stabilizing $L_2^1$ corresponds to adding a basic slice of slopes $\{1,\infty\}$ to this complement. Note that, we will only get a consistent shortening after adding a positive (resp. negative) basic slice if the path $P$ has only '+' (resp. '-') signs. Thus we will have $2(k_1+1)$ stabilizations that are non-loose. These coincides with $L_2^0.$
Now for $L_2^2$,  the complement contains a thickened torus of slopes $s_{k_1}$ and $\frac{1}{2}$. We can break this path into a path from $s_{k_1}$ to $-1$, then a jump from $-1$ to $0$ and finally another jump from $0$ to $\frac{1}{2}$.  Adding a basic slice of slope $\{\frac{1}{2},1\}$ to the complement will shorten the path. If the edge between $0$ and $\frac{1}{2}$ has a positive sign then a positive stabilization will be non-loose and  if it has negative sign then the negative stabilization will be non-loose. Also, notice that after adding the basic slice we will now have a continued fraction block of length $n+1$ and thus there will be $(n+2)(k_1+1)$ non-loose stabilizations that coincide with $L_2^1$. For $k_2>2$, the shortening gives us an edge between $0$ to $\frac{1}{k_2-1}$ which is not a part of continued fraction block and thus we will have $2(k_1+1)(n+1)$ non-loose stabilizations that coincide with $L_2^{k_2-1}.$ As a pair all of these non-loose Hopf links will be distinguished by their rotation numbers.

Putting all these together we will have: fixing the loose $L_1$, the non-loose mountain range for $L_2$ is given by one forward slash based at $(\frac{1}{2n+1}, \frac{2}{2n+1})$, one back slash based at $(-\frac{1}{2n+1}, \frac{2}{2n+1})$ and $n$ $V$s based at $(\frac{2(-n-1+2m)}{2n+1}, 1+\frac{2}{2n+1})$ for $m=1,\cdots, n$. The forward and back slash live in Euler class $\pm (n+1)$ and the $V$s live in Euler class $(-n-1+2m)$ for $m=1,2,\cdots n$ ( we use the $0$ sloped disk to evaluate the Euler class). A further careful observation of the pairwise rotation numbers give us the following pairing information:

 For non-loose $L_1^{k_1}\sqcup L_2^0$, the corresponding vertices ($L_2$) from the forward and back slashes pair with a loose cone ($L_1$) peaked at $(\pm\frac{n+1}{2n+1}, -1+\frac{n+1}{2n+1})$ to give non-loose realizations of Hopf links. For $L_1^{k_1}\sqcup L_2^1$, the corresponding vertices from the forward and back slashes pairs with the same loose cones as before 
    and each based vertex of the $V$s pairs with loose cone peaked at $(\frac{2(-n-1+2m)}{2n+1}, -1+\frac{n+1}{2n+1})$ for $m=1,2,\cdots, n$. Finally for $L_1^{k_1}\sqcup L_2^{k_2}$, the corresponding vertex from the back and forward slashes pair with the same loose cones, the $n$ vertices from the right wings of the $n$ $V$s pair up with loose cones based at $(\frac{-n+2m}{2n+1}+\frac{1}{2n+1}, -1+\frac{n+1}{2n+1})$ for $m=0,\cdots, n-1$, and the $n$ vertices from the left wings of the $n$ $V$s pair up with the loose cones based at $(\frac{-n+2m}{2n+1}-\frac{1}{2n+1}, -1+\frac{n+1}{2n+1})$ for $m=1,2,\cdots, n.$


\subsection{Case 2: large slope $\cup$ small slope} The dividing slopes on the standard neighborhoods of $L_1$ and $L_2$ are given by $s_{k_1}=-\frac{n(1+2k_1)+k_1+1}{1+2k_1}$ and $s_{k_2}=-\frac{1}{k_2}$. Note that, $k_1\geq 0$ and $k_2=0$ overlaps with the next case.

\begin{figure}
    \centering
   
   \labellist
\small\hair 2pt
\pinlabel{(a)} at -100 680
\pinlabel {$\underbrace{\hspace{7 em}}$} at 480 630
\pinlabel {$n+1$} at 500 580
\pinlabel {(b)} at -100 400
\pinlabel{$\underbrace{\hspace{7 em}}$} at 1170 350
\pinlabel{$3n$} at 1170 320
\pinlabel {(c)} at -100 100
\pinlabel{$\underbrace{\hspace{7 em}}$} at 1170 70
\pinlabel {$4n$} at 1170 40
\endlabellist

    \includegraphics[scale=0.2]{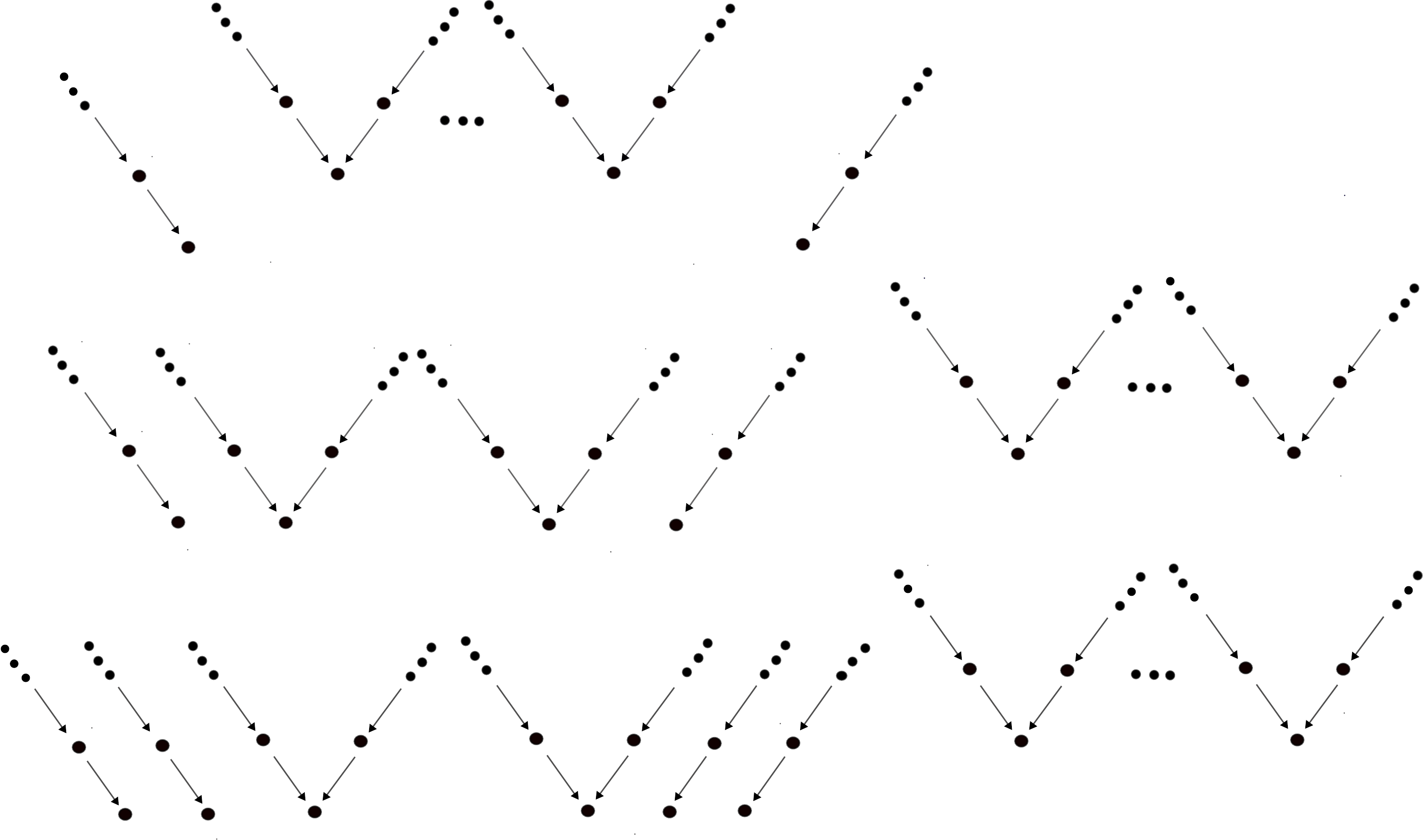}
    \caption{Fix $K_1$ in $L(2n+1,2)$. (a), (b) and (c) show loose mountain range for $K_2$ when $k_1=0, 1,>1$ respectively.}
    \label{fig:looseL(2n+1,2)K1fixed}
\end{figure}

 \begin{figure}[!htbp]

 \labellist
\small\hair 2pt
\pinlabel{(a)} at 0 650

\pinlabel {(b)} at 0 400
\pinlabel {$\underbrace{\hspace{7 em}}$} at 420 300
\pinlabel{$\underbrace{\hspace{7 em}}$} at 1170 370
\pinlabel{$n+1$} at 430 280
\pinlabel{$n+2$} at 1170 330
\pinlabel {(c)} at 0 100
\pinlabel{$\underbrace{\hspace{6 em}}$} at 450 0
\pinlabel $2n$ at 450 -30
\pinlabel{$\underbrace{\hspace{7em}}$} at 1170 70
\pinlabel {$2(n+1)$} at 1170 30
\endlabellist

 \centering
    \includegraphics[scale=0.2]{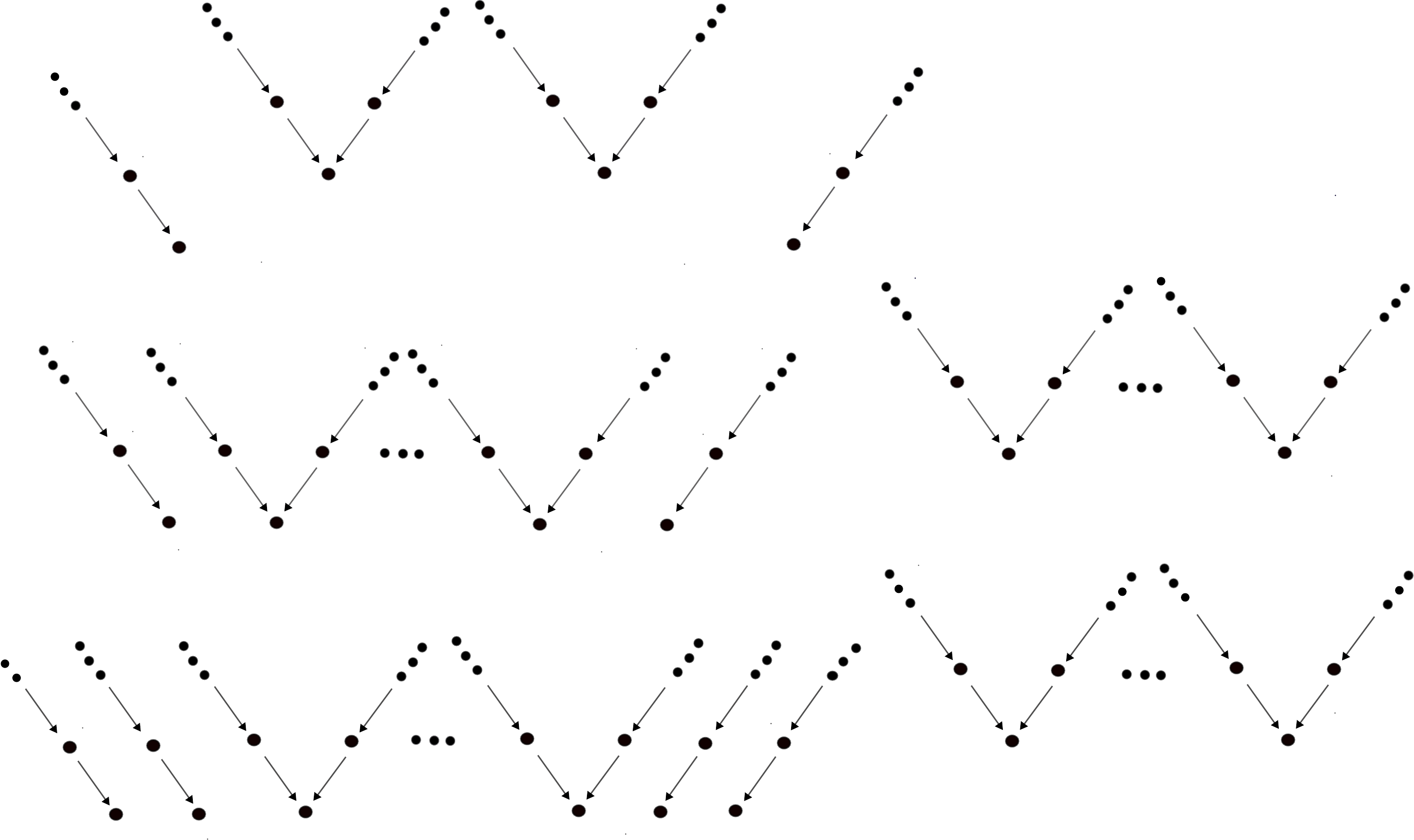}
    \caption{Fix $K_2$. (a), (b) and (c) shows loose mountain range for $K_1$ when $k_2=0, 1,>1$ respectively.}
    \label{fig:looseL(2n+1,2)K2fixed}
\end{figure}

So we assume $k_2\geq 1$. For $k_1=0$, there is a continued fraction block of length $n$ from $-n-1$ to $-1$ followed by a continued fraction block of length $k_2-1$ from $-1$ to $-\frac{1}{k_2}$, thus corresponds to $(n+1)k_2$ tight contact structures. For $k_1=1$, a path from $s_{k_1}$ consists of a continued fraction block of length $2$ from $s_{k_1}=-\frac{3n+2}{3}$ to $-n$, followed by a continued fraction block $P_1$ of length $n-1$ from $-n$ to $-1$ and then another continued fraction block $P_2$ of length $k_2-1$ from $-1$ to $-\frac{1}{k_2}.$ This corresponds to $3nk_2$ tight contact structures. Finally, assume $k_1>1$. In this case a path from $s_{k_1}$ to $s_{k_2}$ consists of a jump from $s_{k_1}$ to $-\frac{2n+1}{2}$, then another jump from $-\frac{2n+1}{2}$ to $-n$, followed by $P_1\cup P_2$. This gives us $4nk_2$ tight contact structures. Observe that, in all the cases $L_1$ is non-loose and $L_2$ is loose.

Next we calculate the classical invariants. The rational Thurston--Bennequin invariant of the components are given by 
\[\tbr(L_1^{k_1})=k_1+\frac{n+1}{2n+1}\] and \[\tbr(L_2^{k_2})=-k_2+\frac{2}{2n+1}.\]
 The rotation numbers are given as $\rotr(L_1^0)=\frac{(-n+2m_1)}{2n+1} \ \text{and} \rotr(L_2^{k_2})=\frac{2(-n+2m_1)}{2n+1}+(-k_2+1+2m_2)$ for $k_2\geq 1, m_1=0,1,\cdots n$ and $m_2=0,1,\cdots, k_2-1$. $m_1$ denotes the number of negative basic slice in the path from $-n-1$ to $-1$ and $m_2$ denotes the number of negative basic slices in the path from $-1$ to $-\frac{1}{k_2}$.  For $k_1=1, k_2\geq 1$, the rotation numbers are $\rotr(K_1^1)=\frac{(n+1)(-2+2m_1)}{2n+1}+\frac{-n+1+2m_2}{2n+1}$ and $\rotr(K_2^{k_2})=\frac{-2+2m_1}{2n+1}+\frac{2(-n+1+2m_2)}{2n+1}+(-k_2+1+2m_3)$ where $m_1=0,1,2;\  m_2=0,1,\dots n-1$ and $m_3=0,1,\cdots k_2-1$. $m_1$ denotes the number of negative basic slice from $s_{k_1}$ to $-n$, $m_2$ denotes the negative signs  in $P_1$ and $m_3$ denotes the negative signs in path $P_2$. Finally for $k_1>1$ the rotation numbers are $\rotr(L_1^{k_1})=\mp(k_1-\frac{n}{2n+1})\mp\frac{n+1}{2n+1}+\frac{-n+1+2m_2}{2n+1}$ and $\rotr(L_2^{k_2})=\frac{\mp1\mp1+2(-n+1+2m_2)}{2n+1}+(-k_1+1+2m_3)$ where $m_1=0,1,\cdots n-1, m_2=0,1,\cdots k_2-1$. $m_2$ and $m_3$ counts the number of negative basic slices in the paths $P_1$ and $P_2$ as before. The Euler classes can be computed using the techniques mentioned before. 

Any stabilization of $L_1^0$ is loose, but if we consider $L_1^0$ having all positive signs in the complement then $S_+(L_1^0)\sqcup L_2^{k_2}$ will still be non-loose but will have half-Giroux torsion in the complement. Same is true for $S_-(L_1^0)\sqcup L_2^{k_2}$  when we consider $L_1^0$ with only negative sign. All other cases, any stabilization will loosen the link. The other stabilizations of $L_1^{k_1}$ are exactly same as in Theorem 1.14 in \cite{Chatterjee-Etnyre-Min_2025_existence}. So we omit the details here.

Putting everything together we have the following: Fixing the loose $L_2$, the non-loose realizations of $L_1$ form $1$ forward slash based at $(\frac{n}{2n+1},\frac{n+1}{2n+1})$, one back slash based at $(-\frac{n}{2n+1},\frac{n+1}{2n+1})$, $n-1$ $V$s based at $(\frac{(-n+2m_1)}{{2n+1}}, \frac{n+1}{2n+1})$ where $m_1=1,\cdots n-1$ and $n$ $V$s based at $(\frac{(-n+1+2m_2)}{2n+1}, 1+\frac{n+1}{2n+1})$ for $m_2=0,1,\cdots n-1.$ 

For non-loose $L_1^0\sqcup L_2^{k_2}$ each of the based vertex from the forward slash, the $n-1$ $V$ and the back slash pairs with a loose cone peaked at $(\frac{2(-n+2m_1)}{2n+1},-1+\frac{2}{2n+1})$ for $m_1=0,1,\cdots n$ respectively. 
For the non-loose $L_1^1\sqcup L_2^{k_2}$the corresponding vertex from the forward and back slash pair with loose cones peaked at $(\pm\frac{2n}{2n+1}, -1+\frac{2}{2n+1})$ respectively, the corresponding vertices from the right wings of  $n-1$ $V$s pair with the loose cones based at $(\frac{2}{2n+1}+\frac{2(-n+1+2m_1)}{2n+1},-1+\frac{2}{2n+1})$ respectively for $m_1=0,\cdots n-2$, the corresponding vertices from the left wings of  $n-1$ $V$s pair with the loose cones based at $(-\frac{2}{2n+1}+\frac{2(-n+1+2m_1)}{2n+1},-1+\frac{2}{2n+1})$ for $m_1=1,\cdots n-1$,  the based vertices of the $n$ $V$s pair up with loose cone peaked at $(\frac{2(-n+1+2m_2)}{2n+1},-1+\frac{2}{2n+1})$ for $m_2=0,1,\cdots, n-1$.  Finally, for the non-loose $L_1^{k_1}\sqcup L_2^{k_2}$ for $k_1>1$,the corresponding vertices from the forward and back slashes pair up with the loose cones based at $(\pm\frac{2n}{2n+1},-1+\frac{2}{2n+1})$, the $n-1$ vertices from the right wing of the $n-1$ $V$s pair with $n-1$ loose $L_2$ cones peaked at $(\frac{ 2}{2n+1}+\frac{2(-n+1+2m_1)}{2n+1}, -1+\frac{2}{2n+1})$ for $m_1=0,\cdots, n-2$ and the $n-1$ vertices from the left wing of $n-1$ $V$s pair with $n-1$ loose cones peaked at $(-\frac{ 2}{2n+1}+\frac{2(-n+1+2m_1)}{2n+1}, -1+\frac{2}{2n+1})$ for $m_1=1,\cdots, n-1$. The $n$ corresponding vertices from the left wing of the $n$ $V$s pair up with a loose cone peaked at $(\frac{2(-n+1+2m_1)}{2n+1},-1+\frac{2}{2n+1})$ $m_1=0,\cdots n-1$, same for the $n$ candidates of the right wings of the $n$ $V$s. All of the Hopf links can be distinguished by their pairwise rotation numbers.

\subsection{Case 3: large slope $\cup$ large slope}The dividing slopes on the standard neighborhoods of $L_1$ and $L_2$ are given by $s_{k_1}=-\frac{n(1+2k_1)+k_1+1}{1+2k_1}$ and $s_{k_2}=\frac{1}{k_2}$. 

First we assume $k_2=0$. For $ k_1=0$, there is exactly one edge between $-n-1$ to $\infty$ and this gives $2$ tight contact structures and thus $2$ non-loose representatives (corresponding to the positive and negative basic slice) whose rotation numbers are $\rotr((L_1^0)^\pm)=\mp\frac{n+2}{2n+1} \text{and} \rotr((L_2^0)^\pm)=\mp\frac{3}{2n+1}$. We denote the two $L_1^0$ as $L_1^\pm$.  For $k_1=1$, a path from $-\frac{3n+2}{3}$ to $\infty$ consists of a continued fraction block of length $3$ and thus this corresponds to $4$ tight contact structures. These $4$ $L_1^1$ we denote as $(L_1^1)^m$ where $m$ denotes the negative basic slices in the above path.  For $k_1>1$, a path from $s_{k_1}$ to $\infty$ consists of a jump from $s_{k_1}$ to $-\frac{2n+1}{2}$, then a continued fraction block of length $2$ from $-\frac{2n+1}{2}$to $\infty$. Thus we will have $6$ tight contact structures. These $6$ candidates are $({L_1^{k_1}})^{\pm,m}$ where $\pm$ sign in the sign of the first jump and $m$ counts the number of negative signs in the later path. The rotation numbers of the $4$ candidates are $ \rotr((L_1^1)^m)=\frac{(n+1)(-3+2m)}{2n+1}$ and  $\rotr((L_2^0)^m)=\frac{-3+2m}{2n+1}$ where $m=0,1,2,3$. The rotation numbers of the $6$ loose/loose pairs are given by $\rotr((L_1^{k_1})^{\pm,m})=\mp(k_1-\frac{n}{2n+1})-\frac{(2-2m)(n+1)}{2n+1}$ and $\rotr((L_2^0)^{\pm,m})=\mp\frac{ 1}{2n+1}-\frac{(2-2m)}{2n+1}$ for $m=0,1,2$. It is easy to check that all of these candidates are componentwise loose. The rational Thurston Bennequin invariants are \[\tbr(L_1^{k_1})=k_1+\frac{n+1}{2n+1}\] and \[\tbr(L_2^0)=\frac{2}{2n+1}.\] 

Clearly, for $m=0$, $S_+((L_1^1)^0)\sqcup (L_2^0)^0$ is non-loose as adding a basic slice in the complement leads to a consistent shortening in the $T^2\times [0,1]$ and thus give us a  non-loose link. This coincides with the link $(L_1^0)^+\sqcup (L_2^0)^+$. Same is true when we negatively stabilize $(L_1^1)^3$. These two loose components $(L_1^1)^0$ and $(L_1^1)^3$ are parts of the back and forward slashes.  For $m=1,2$, any stabilization of $(L_1^1)^m$ will loosen the whole link. Thus these candidates form the base of the two $V$s. Now when we stabilize $(L_1^2)^{\pm, m }$ we see that a positive stabilization of $(L_1^2)^{+, m }$ will coincide with $(L_1^2)^{m-1}$ and a negative stabilization of $(L_1^1)^{-, m }$ coincides with $(L_1^1)^{m+1 }$.

So putting everything together, we see that fixing the loose $L_2$ with $k_2=0$ the loose $L_1$ constitutes $1$ loose forward slash based at $(\frac{n+2}{2n+1}, \frac{n+1}{2n+1})$, $1$ loose back slash based at $(-\frac{n+2}{2n+1}, \frac{n+1}{2n+1})$ and $2$ loose $V$s based at $(\mp\frac{n+1}{2n+1}, 1+\frac{n+1}{2n+1})$. 

If one fixes $L_2$ with $k_2=1$, for $k_1=0$ we will see a continued fraction block of length $n+2$ from $-n-1$ to $1$, for $k_2=1$, we have one continued fraction block of length $2$ from $-\frac{3n+2}{3}$ to $-n$ and another continued fraction block of length $n+1$ from $-n$ to $1$. Finally for $k_1>1$, we have one jump from $s_{k_1}$ to $-\frac{2n+1}{2}$, another jump from $-\frac{2n+1}{2}$ to $-n$ and then the same path as before. So we will have $n+3$, $3(n+2)$ and $4(n+2)$ non-loose Hopf links respectively. The rotation numbers of the components are given in Table~\ref{tab:L(2n+1,2)}. We now look at the stabilizations of $L_1$. Like before we denote the $(n+3)$ $L_1^0$ as $(L_1^0)^m$ where $m$ denote the number of negative signs in the length $n+2$ path.  For $k_1=1$, we denote the $3(n+2)$ candidates as $(L_1^1)^{m_1, m_2}$ where $m_1$ counts the number of negative signs in the path from $-\frac{3n+2}{3}$ to $-n$ and $m_2$ counts the number of negative signs in the next $n+1$ length path. Note that, $S_+((L_1^1)^{0,0}$ coincides with $(L_1^0)^0$ and $S_-((L_1^1)^{2,n+1}$ coincides with $(L_1^0)^3$. These form the base of the back and the forward slash respectively. The $S_+((L_1^1)^{0,m_2}$ and $S_-((L_1^1)^{2,m_2}$ will form the bases of the $n+1$ $V$s for $m_2=1,\cdots n+1$. Any stabilization of $(L_1^1)^{1, m_2}$ will loosen the whole link. Thus $(L_1^1)^{1, m_2}$ are the bases of the $n+2$ $V$s.

Putting everything together, we have the following loose mountain range for $L_1$ fixing $L_2$ with $k_2=1$, \  $1$ forward and $1$ back slash based at $(\pm\frac{n+2}{2n+1}, \frac{n+1}{2n+1})$ respectively , $n+1$ loose $V$ s based at $(\frac{-n-2+2m_1}{2n+1},\frac{n+1}{2n+1})$ and $n+2$ loose $V$s based at $(\frac{-n-1+2m_2}{2n+1},1+\frac{n+1}{2n+1})$ where $m_1=0,\cdots n$ and $m_2=0,1,\cdots n+1$.

One can fix $L_2$ with $k_2>1$ and can do a similar analysis to see how the mountain range changes for $L_1.$ Check (c) of Figure~\ref{fig:looseL(2n+1,2)K2fixed}. The mountain range for $L_2$ fixing $L_1$ is given in Figure~\ref{fig:looseL(2n+1,2)K1fixed}.

 For fixed $k_1$ and $k_2$, they pair with each other to give non-loose Hopf links with zero Giroux torsion in the complement.  Note that, the complement of  ${S_+}^k(L_1^0)\sqcup {S_+}^{k'}(L_2^0)$ contains a half Giroux torsion layer if $k+k'=1$ and it contains a full Giroux torsion layer if $k+k'>1$. If one component is stabilized positively and the other one negatively that loosens the link.
\begin{center}
    \begin{table}[!htbp]
    \tiny
        \centering
        \begin{tabular}{||c||c||c||c||c||c||}
        \hline
        
             $k_1$ & $k_2$  &numbers&$\rot_{\mathbb{Q}}(L_1)$ & $\rot_{\mathbb{Q}}(L_2)$ &range of $m$ \\
             \hline
             0&0  &2 &$\mp\frac{n+2}{2n+1}$&$\mp\frac{3}{2n+1}$ &\\
             \hline
             0&1&$(n+3)$ &$\frac{-n-2+2m}{2n+1}$ &$\frac{2(-n-2+2m)}{2n+1}$&$0\leq m\leq n+2$\\
             \hline
             0&>1&$2(n+2)$ &$\frac{-n-1+2m}{2n+1}\mp\frac{1}{2n+1}$ &$\mp (k_2-\frac{2n-1}{2n+1})+\frac{2(-n-1+2m_1)}{2n+1}$&$0\leq m\leq n+1$\\
             \hline
             1&0& 4 &$\frac{(2m-3)(1+n)}{2n+1}$ &$\frac{2m-3}{2n+1}$&$0\leq m\leq 3.$\\
             \hline
             1&1&$3(n+2)$ &$\frac{(-2+2m_1)(n+1)}{2n+1}+\frac{(-n-1+2m_2)}{2n+1}$ &$\frac{(-2+2m_1)}{2n+1}+\frac{2(-n-1+2m_2)}{2n+1}$&  \begin{tabular}{@{}c@{}}$0\leq m_1\leq 2$ \\$0\leq m_2\leq n+1$ \end{tabular}\\
             \hline
             1&>1&$6(n+1)$ &$\frac{(-2+2m_1)(n+1)}{2n+1}+\frac{(-n+2m_2)}{2n+1}\mp\frac{1}{2n+1}$ &$\frac{(-2+2m_1)}{2n+1}+\frac{2(-n+2m_2)}{2n+1}\mp (k_2-\frac{2n-1}{2n+1})$ &\begin{tabular}{@{}c@{}}$0\leq m_1\leq 2$ \\$0\leq m_2\leq n$ \end{tabular}\\
             \hline
             >1&0&6&$\mp k_1\pm\frac{n}{2n+1}-\frac{(n+1)(2-2m)}{2n+1}$ &$\mp\frac{1}{2n+1}-\frac{2-2m}{2n+1}$ &$0\leq m\leq 2$\\
             \hline
             >1&1&$4(n+2)$&$\mp(k_1-\frac{n}{2n+1})\mp\frac{n+1}{2n+1}+\frac{-n-1+2m}{2n+1}$ &$\mp\frac{1}{2n+1}\mp\frac{1}{2n+1}+\frac{2(-n-1+2m)}{2n+1}$ &$0\leq m\leq n+1$\\
             \hline
             >1&>1&$8(n+1)$&$\mp(k_1-\frac{n}{2n+1})\mp\frac{n+1}{2n+1}+\frac{-n+2m}{2n+1}\mp\frac{1}{2n+1}$ &$\mp\frac{1}{2n+1}\mp\frac{1}{2n+1}+\frac{2(-n+2m)}{2n+1}\mp(k_2-\frac{2n-1}{2n+1})$ &$0\leq m\leq n$\\
             \hline
             
        \end{tabular}
        \caption{Rotation numbers of non-loose Hopf links in $L(2n+1,2)$ for $n\geq 1$ and for large $\cup$ large slope.}
        \label{tab:L(2n+1,2)}
    \end{table}
\end{center}
    
\end{proof}

\section{The general Classification result}
\label{sec:general_result}

In this section we prove the general classification of rational Hopf links. As we are classifying the  positive Hopf links, we consider $L_1, L_2$ with opposite orientations. A classification for the negative Hopf link can be done by switching the orientation of $L_2$, thus changing the signs of the rotation numbers and switching the back and the forward slashes.
\begin{proof}[Proof of Theorem \ref{thm:general}] Consider the lens space $L(p.q)$ where $q\neq 1$ or $p-1$. Let $-\frac{p}{q}=[a_0,a_1,\cdots a_n]$. Note that, $(-\frac{p}{q})^a=[a_0,a_1,\cdots a_{n-1}]$ and we denote this as $-\frac{p'}{q'}$ where $1\leq p'\leq p$ and $p'q\equiv 1  \pmod p$.  On the other hand, we denote $(-\frac{p}{q})^c= [a_0,a_1,\cdots, a_n+1]$ as $-\frac{p''}{q''}$ where $1\leq p''\leq p$ and  $p''q\equiv p-1\pmod p$.

Like in the previous section, for non-loose representatives of $L_1\cup L_2$ we need to consider the following pairs of slopes. small slope $\cup$ large slope, large slope $\cup$ small slope and large slope $\cup$ large slope. 
\subsection{Case 1: small slope $\cup$ large slope}\label{ssec:small_large_general}  Note that, in this case, the possible dividing slopes for a standard neighborhood of a non-loose representative $L_1\sqcup L_2$ are $-\frac{p''}{q''}\oplus k_1(-\frac{p}{q})=-\frac{p''+k_1p}{q''+k_1q}$ for $k_1\geq 0$ and $\frac{1}{k_2}$ for $k_2\geq 0$. To understand the non-loose representatives we need to understand the signed paths describing the tight contact structures on $L(p,q)\setminus(V_1\cup V_2)=T^2\times I$ with boundary conditions $s_{k_1}$ and $s_{k_2}$. Now for $n\geq 2$ ($n=1$ case is slightly different which is left for the reader), we will divide this path into two sub-paths. One from $s_{k_1}$ clockwise to $-\frac{p''}{q''}$ which is a continued fraction block of length $k_1$ , then a path $P_1$ from $-\frac{p''}{q''}$ clockwise to $s_{k_2}=\frac{1}{k_2}$. Note that, depending on the value of $k_2$ the path from $-\frac{p''}{q''}$ to $\frac{1}{k_2}$ will be different. For $k_2=0$, decorations of the path $P_1$ will correspond to $A_1=|a_1|\cdots |a_n+1|$ tight contact structures and thus we have $A_1(k_1+1)$ non-loose representatives. For $k_2=1,$ decorations of this path correspond to $A_2=|a_0-1||a_1+1|\cdots |a_n+1|$ tight contact structures. Thus we have $A_2(k_1+1)$ non-loose representatives in this case. Finally, when we consider $k_2>1$, we can subdivide $P_1$ into two parts: one from $-\frac{p''}{q''}$ to $0$ and then a jump from $0$ to $\frac{1}{k_2}.$ Thus decorations of this path gives us $A_3=2|a_0||a_1+1|\cdots|a_n+1|$ tight contact structures and we will have a total $A_3(k_1+1)$ non-loose Hopf links.
 It is easy to see from the Farey graph that with these dividing slopes, we will have $L_1$ loose and $L_2$ non-loose.

Next we will show how these non-loose candidates are related by stabilizations. Note that, $L_1$ is already loose and we will have a loose mountain peaked at $(r,-1-\frac{p''}{p})$ where $r$ is the rotation number that can be calculated from the algorithm given in Section~\ref{sec:background}.  We will now consider the stabilizations of $L_2$. We will denote the component $L_2$ with its standard neighborhood having slope $s_{k_2}$ as $L_2^{k_2}$. 

By Lemma~\ref{lem:stabilization_loose} any stabilization of $L_2^0$ is loose. Note that, the link can still be non-loose. 
Suppose $P'$ be the path that describes the complement of $L_1\sqcup L_2^0$. This is a union of continued fraction blocks. Stabilizing $L_2$ means adding a $\pm$ basic slice $B_\pm$ with boundary slopes $\infty$ and $-1$ to the back face $T^2\times\{1\}$. If all the continued fraction blocks have the same sign, say positive then $L_1\sqcup S_+(L_2^0)$ is still non-loose but will have a half- Giroux torsion in its complement. The same is true if the path contains only negative signs and $L_2$ is negatively stabilized.  But if any of the continued fraction blocks has a mix of sign, adding $B_\pm$ will lead to an inconsistent shortening in $T^2\times I$ and thus the link will be loose.
 The link component $(L_2^0)^{m,n}$ with $m=|a_1+1|$ and $m=0$, is the base of $|a_2+1|\cdots |a_n+1|$ forward slashes and  $|a_2+1|\cdots |a_n+1|$ back slashes respectively. For $1\leq m,n\leq |a_1+2|$, $(L_2^0)_{m,n}$ are the base of $|a_1+2||a_2+1|\cdots|a_n+1|$ non-loose $V's$.   

Now we consider the stabilizations of $L_2^{1}$. Note that there are $A_2(k_1+1)$ non-loose representatives. The path $P$ from $a_0+1$ to $1$ has $|a_0|$ basic slices and suppose the decoration of this path has $i$ positive signs and $|a_0|-i$ negative signs. We call the $L_2^1$ whose complement correspond to these decorations as $(L_2^1)^{i,j}$ where $i$ and $j$ are the number of positive and negative basic slices in $P$. Clearly, the only two cases where we will see consistent shortening are when we positively stabilize $(L_2^1)^{|a_0|,0} $ or when we negatively stabilize $(L_2^1)^{0,|a_0|}$. Any stabilization of $(L_2^1)^{i,j}$ where $1\leq i,j\leq |a_0+1|$ is loose and these $(L_2^1)^{i,j}$ also give us the base of $|a_0+1||a_1+1|\cdots|a_n+1|$ non-loose $V$'s.

Finally for $L_2^{k_2}$ with $k_2>1$, we need to consider two cases. For $L_2^2$, the path from $a_0+1$ to $\frac{1}{2}$ can have the following decorations: $k$ positive basic slices and $l$ negative basic slices between $a_0+1$ to $0$ where $0\leq k,l\leq |a_0+1|$ and then a $\pm$ basic slice with boundary slopes $0$ and $\frac{1}{2}.$ Let us call the corresponding $L_2^2$ as $(L_2^2)^{i,j,\pm}$. In this case, stabilizing corresponds to adding a basic slice $B_{\pm}$ with boundary slopes $\frac{1}{2}$ and $1$. For the same reason as before we will have $S_+((L_2^2)^{k,l,+})$, $S_-((L_2^2)^{k,l,-})$ non-loose and correspond to $(L_2^1)^{k+1,l}$ and $(L_2^1)^{k,l+1}$ where $0\leq k,l\leq |a_0+1|$. On the other hand $S_-((L_2^2)^{k,l,+})$, $S_-((L_2^2)^{k,l,+})$ will be loose. For $k_2>2$, the path describing the contact structure of the complement contains a continued fraction block of length $|a_0+1|$ from $a_0+1$  to $0$ with $k$ positive and $l$ negative basic slices as before and a single jump from $0$ to $\frac{1}{k_2}.$ Note that, stabilizing in this case corresponds to adding a basic slice of slope $\frac{1}{k_2}$ and $\frac{1}{k_2-1}$ in the complement. Like before we denote the link component $L_2$ corresponding to these paths as $(L_2^{k_2})^{k,l,\pm}$. We will  have $S_+((L_2^{k_2})^{k,l,+})$, $S_-((L_2^{k_2})^{k,l,-})$ non-loose and these coincides with $(L_2^{k_2-1})^{k,l,+}$ and $(L_2^{k_2-1})^{k,l,-}$. 


Using the method mentioned in Section~\ref{sec:background} one could easily compute the rational Thurston-Bennequin invariants as $\tb_\mathbb{Q}(L_1^{s_{k_1}})=-k_1-\frac{p''}{p}$ and $\tb_\mathbb{Q}(L_1^{s_{k_2}})=k_2+\frac{q}{p}$.

Now putting these together , we have the following mountain range for $L_2^{s_{k_2}}$ fixing $L_1$ and with zero Giroux torsion in the complement of $L_1\sqcup L_2$:
We have $|a_2+1||a_3+1|\cdots|a_n+1|$ many forward and $|a_2+1||a_3+1|\cdots|a_n+1|$ many backward slashes based at $(r_i,\frac{q}{p})$, $|a_1+2||a_2+1|\cdots|a_n+1|$ non-loose $V$'s based at $(r_j,\frac{q}{p})$. Finally there are $|a_0+1||a_1+1|\cdots|a_n+1|$ many $V$'s based at $(r_k,1+\frac{q}{p})$.  The $d_3$ invariants can be computed from the surgery diagrams given in Section~\ref{sec:contactsurgery} using the techniques of \cite{Ding_Geiges_Stipcisz}. The rotation number and Euler classes can be computed using the algorithm mentioned in Section~\ref{sec:background}. Note that, as $L_1$ is loose, we will have a loose  cone peaked at $(r,-1-\frac{p''}{p})$ where value of $r$ can be computed using the technique mentioned before.  A similar analysis can be done for $n=1$. For $n=1, k_2=0$, we note that we have a continued fraction block of length $|a_1+1|$ from $-\frac{p'}{q'}$ to $\infty$ and thus we will have $|a_1|(k_2+1)$ non-loose representatives in this case. For $k_2\geq 1,$ the analysis is exactly the same as $n\geq 2$. Putting everything together for $L_2$, we will have, $1$ forward and $1$ back slash, $|a_1+2|$ $V's$ based at $(r,\frac{q}{p})$ and $|a_0+1||a_1+1|$ $V$'s based at $(r,1+\frac{q}{p})$ when $L_1$ is fixed. 

\subsection{Case 2 Large slope $\cup$ small slope} \label{ssec:large_small_general} Note that, we start with $k_1\geq 0$ and $k_2\geq 1$ as $k_2=0\ \text{and}\ k_1\geq0$ overlaps with the next case. This particular case is very similar to the proof of Theorem 1.15 in \cite{Chatterjee-Etnyre-Min_2025_existence}. Here we have $s_{k_1}=-\frac{p'+k_1p}{q'+k_1q}$ and $s_{k_2}=-\frac{1}{k_2}$ for $k_i\geq 0$. For $n\geq 2$, denote the path from  $r=[a_0, a_1,\cdots a_{n-2}+1]$ to $-1$ in the Farey graph by $P_2$. Decorations on this path will correspond to $B=|a_0+1||a_1+1|\cdots|a_{n-2}+1|$ tight contact structures on $T^2\times I$ with boundary slopes $r$ and $-1$.  Note that, any path from $s_{k_1}$ clockwise to $-1$ will contain this path. Now any path from $s_{k_1}$ clockwise to $s_{k_2}$ can be broken into the following sub-paths: a path from $s_{k_1}$ to $r$, then $P_2$ and finally a path from $-1$ to $-\frac{1}{k_2}.$ The last portion of the path is a continued fraction block of length $k_2-1$ and thus decorations on this path will correspond to $k_2$ tight contact structures. We denote this path by $P_3$. Now we will look at the path for different values of $k_1$. In particular for $k_1=0$, we see that the path from $s_0$ to $s_{k_2}$ consists of a continued fraction block of length $|a_{n-1}+1|$ from $-\frac{p'}{q'}$ to $r$ and then $P_2$ followed by $P_3$. This gives $|a_{n-1}|Bk_2$ tight contact structures. For $k_1=1,$  we see that that the path from $s_1$ to $-\frac{1}{k_2}$ consists of a continued fraction block of length $|a_n|$ from $s_1$ to $s=[a_0, a_1,\cdots a_{n-1}+1]$, then a continued fraction block of length $|a_{n-1}+2|$ from $s$ to $r$, followed by $P_2$ and $P_3$. This gives $|a_n-1||a_{n-1}+1|Bk_2$ tight contact structures. For $k_1>1$, the path from $s_{k_1}$ to $s_{k_2}$ consists of one jump from $s_{k_1}$ to $-\frac{p}{q}$, followed by a continued fraction block of length $|a_n+1|$ from $-\frac{p}{q}$ to $s=[a_0, a_1,\cdots a_{n-1}+1]$, then a continued fraction block of length $|a_{n-1}+2|$ from $s$ to $r$, then $P_2$ and finally $P_3$. This gives $2|a_n||a_{n-1}+1|Bk_2$ tight contact structures.
It is easy to check that in this case, $L_1$ is non-loose and $L_2$ is loose. Like Case~\ref{ssec:small_large_general}, now we will check the different stabilizations of $L_1$ and see how these stabilizations are related with each other. By $L_j^{k_i}$ we denote the Legendrian unknot whose standard neighborhood has dividing slope $s_{k_i}$. Note that, any stabilization of $L_1^0$ will be loose by Lemma~\ref{lem:stabilization_loose} but the link still can be non-loose. The complement of $L_1^0\sqcup L_2$  contains a union of continued fraction blocks from $-\frac{p'}{q'}$ to $s_{k_2}$. Lets call it $P'$. While stabilizing $L_1^0$ we are adding a basic slice with boundary slopes $-\frac{p''}{q''}$ and $-\frac{p'}{q'}$. Thus whenever $P'$ has a mix of sign, we can inconsistently shorten the path and $T^2\times I$ becomes overtwistsed loosening the link. So, when $P'$ only contains positive signs, $S_+(L_1^0)\sqcup L_2$ is still non-loose but contains a half Giroux torsion in its complement. Same is true if $P'$ just contains negative basic slices and we negatively stabilize $L_1^0$. Thus there will be two non-loose $L_1^0$ which stays non-loose when included in the link but the link now will have a half-Giroux torsion in the complement. 
The stabilizations of $L_1^{s_{k_1}}$ with $k_1\geq 1$ are exactly the same as shown in the proof of Theroem 1.15 in \cite{Chatterjee-Etnyre-Min_2025_existence}. So, we omit the details. An interested reader is suggested to check that proof.

 For $n=1$, we note that $-\frac{p'}{q'}=[a_0]$ and $s=[a_0+1]$ are integers. So, any path from $s_{k_1}$ to $s_{k_2}$ will be broken as follows. For $k_1=0$, a path from $s_0$ to $-\frac{1}{k_2}$ consists of a continued fraction block of length $|a_0+1|$ from $s_0$ to $-1$ followed by another continued fraction block of length $|k_2-1|$, denoted by $P_3$ from $-1$ to $-\frac{1}{k_2}$. Decorations on this path correspond to $|a_0|k_2$ tight contact structures. For $k_1=1$, the path from $s_{k_1}$ to $-\frac{1}{k_2}$ consists of a continued fraction block of length $|a_1|$ from $s_1$ to $s$, followed by a continued fraction block of length $|a_0+2|$ from $s$ to $-1$ and finally $P_3$. This corresponds to $|a_1-1||a_0+1|k_2$ contact structures. For $k_1>1$, the path will consist of one jump from $s_{k_1}$ to $-\frac{p}{q}$, followed by a continued fraction block of length $|a_1+1|$ from $-\frac{p}{q}$ to $s$, followed by a continued fraction block of length $|a_0+2|$ from $s$ to $-1$ and finally $P_3$. This corresponds to $2|a_1||a_0+1|k_2$ contact structures.

 The rational Thurston--Bennequin invariant of the components are given by $\tb_\mathbb{Q}(L_1^{s_{k_1}})= k_1+\frac{p'}{p}$ and $\tb_\mathbb{Q}(L_2^{s_{k_2}})= -k_2+\frac{q}{p}$. Putting everything together we see the following non-loose mountain range for $L_1$: Fixing the loose component $L_2$, there are $|a_0+1|\cdots |a_{n-2}+1|$ forward slashes and the same number of backward slashes based at $(r_i,\frac{p'}{p})$, $|a_0+1|\cdots|a_{n-2}+1||a_{n-1}+2|$ $V$'s based at $(r_j,\frac{p'}{p})$ and finally another $|a_0+1|\cdots|a_{ n-1}+1||a_n+1|$ $V$'s based at $(r_k,1+\frac{p'}{p}).$ For $n=1$, we will have the following mountain range : $1$ forward and $1$ back slash, $|a_0+2|$ $V$'s based at $(r,\frac{p'}{p})$ and additionally $|a_0+1||a_1+1|$ $V$'s based ar $(r,1+\frac{p'}{p}).$
\begin{center}
 \begin{figure}
    
    \labellist
    \pinlabel $(a)$ at 0 600
    \pinlabel $(b)$ at 0 340
    \pinlabel $(c)$ at 0 40
    \pinlabel $\underbrace{\hspace{3 em}}$ at 145 530
     \pinlabel $\underbrace{\hspace{3 em}}$ at 700 530
       \pinlabel $\underbrace{\hspace{9 em}}$ at 1090 610
        \pinlabel $\underbrace{\hspace{10 em}}$ at 425 530
      \pinlabel $\underbrace{\hspace{3 em}}$ at 145 260
     \pinlabel $\underbrace{\hspace{3 em}}$ at 700 260
       \pinlabel $\underbrace{\hspace{9 em}}$ at 1090 330
        \pinlabel $\underbrace{\hspace{10 em}}$ at 425 260

         \pinlabel $\underbrace{\hspace{3 em}}$ at 145 0
     \pinlabel $\underbrace{\hspace{3 em}}$ at 700 0
       \pinlabel $\underbrace{\hspace{9 em}}$ at 1090 60
        \pinlabel $\underbrace{\hspace{10 em}}$ at 425 0
        
    \tiny
    \pinlabel $|a_2+1|\cdots|a_{n-1}|$ at 120 490
   
    \pinlabel $|a_1+2|\cdots|a_{n-1}|$ at 400 490
    \pinlabel $|a_2+1|\cdots|a_{n-1}|$ at 700 490
    \pinlabel $|a_0+1||a_1+1|\cdots|a_{n-1}|$ at 1100 590

     \pinlabel $|a_2+1|\cdots|a_{n}-1|$ at 120 230
    
    \pinlabel $|a_1+2|\cdots|a_{n}-1|$ at 400 230
    \pinlabel $|a_2+1|\cdots|a_{n}-1|$ at 700 230
    \pinlabel $|a_0+1||a_1+1|\cdots|a_{n}-1|$ at 1100 300
    \pinlabel $2|a_2+1|\cdots|a_{n}|$ at 120 -30
    
    \pinlabel $2|a_1+2|\cdots|a_{n}|$ at 400 -30
    \pinlabel $2|a_2+1|\cdots|a_{n}|$ at 700 -30
    \pinlabel $2|a_0+1||a_1+1|\cdots|a_{n}|$ at 1100 40
    \endlabellist
    \includegraphics[scale=0.3]{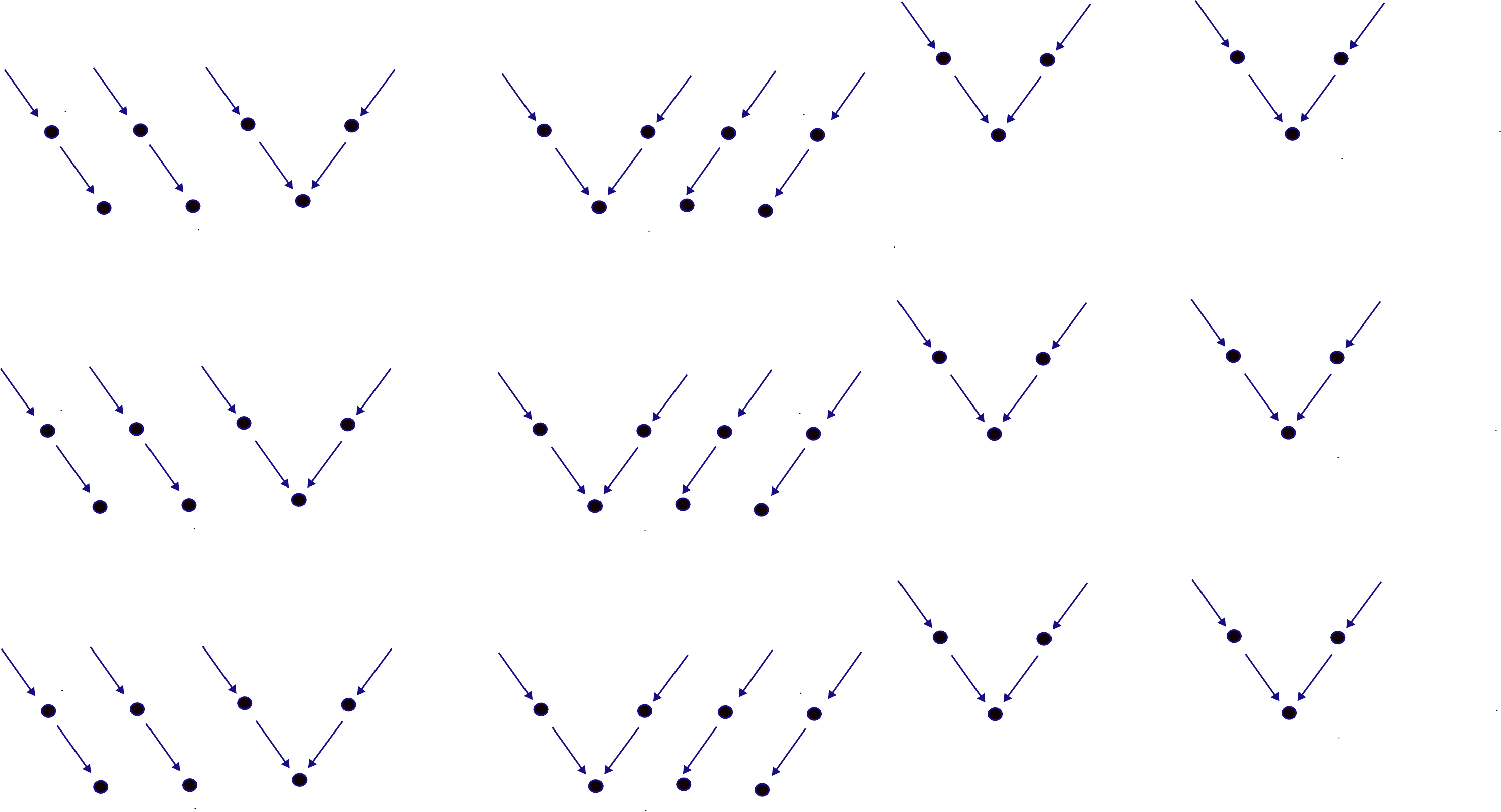}
    \caption{Loose mountain range of $L_2$ in $L(p,q)$ after we fix $L_1$ with (a) $k_1=0$, (b) $k_1=1$ and (c) $k_1>1.$}
    \label{fig:Fix_L_1_General}
\end{figure}
\end{center}

\subsection{Case 3: Large slope $\cup$ Large slope}\label{ssec:large_large_general} Here we will consider both large slopes. The dividing slope of the standard neighborhod of $L_1$ and $L_2$ in this case are given by $s_{k_1}=-\frac{p'+k_1p}{q'+k_1q}$ and $s_{k_2}=\frac{1}{k_2}$ for $k_i\geq 0$. Note that, the path from $s_{k_1}$ to $s_{k_2}$ will be very similar to our last case except that now the path will go to $\frac{1}{k_2}.$ First we consider $k_2=0.$ For $n\geq 3, $ denote the path from $r=[a_0,a_1,a_{n-2}+1]$ to $\infty$ in the Farey graph by $P_2$ (Note that, this path will go from $r$ to $a_0+1$ and then take a jump to infinity) . Decorations on this path correspond to $A_1=|a_1||a_2+1|\cdots|a_{n-2}+1|$ tight contact structures. Any path from $s_{k_1}$ to $s_{k_2}$ will contain this path for $k_2=0$. So after fixing $k_2=0$, we will vary $k_1$. Now as before, for $k_1=0,$ a path from $-\frac{p'}{q'}$ to $r$ consists of a continued fraction block of length $|a_{n-1}+1|$. Thus from $-\frac{p'}{q'}$ to $\infty$ we have a total $A_1|a_{n-1}|$ tight contact structures. Following same method as Case~\ref{ssec:large_small_general} for $k_1=1 \ \text{and} \ k_1>1$, the path gives us $A_1|a_{n-1}+1||a_n-1|$ and $2A_1|a_{n-1}+1||a_n|$ tight contact structures respectively. Now for $k_2=1$ we see that a path from $r$ to $1$ will give us $A_2=|a_0-1||a_1+1|\cdots|a_{n-2}+1|$ tight contact structures. Finally for $k_2\geq 2$, a path from $r$ to $\frac{1}{k_2}$ can be subdivided into a path from $r$ to $0$ and then a jump from $0$ to $\frac{1}{k_2}$.The first part gives us $A_3=|a_0-1||a_1+1|\cdots|a_{n-2}+1|$ tight contact structures. So in total this path corresponds to $2A_3$ tight contact structures. Thus for $k_2=1$ and $k_2\geq 2$, we can vary $k_1$ as before and see the numbers of tight contact structures are as shown in Table \ref{table:general}. Note that, for $n=1$ and $n=2$, we have few cases where the numbers are different. Those are given in Table~\ref{table:general}.

 It is easy to see that both components in each of the cases are loose. Next, we talk about the stabilizations of the components. Note that when we talk about stabilizations we are basically talking about the stabilizations of the loose component but these stabilizations still keep the link non-loose. As before we will consider the stabilizations for $n\geq 3$, the other cases can be analyzed similarly.
 First, we fix $L_1$ with $k_1=0$ and analyze the loose mountain range for $L_2$ while varying $k_2$. The complement of $L_1^0\sqcup L_2^0$ contains a union of continued fraction block. When all the continued fraction blocks are of the same sign,say positive, then $L_1^0\sqcup S_+(L_2^0)$ is still non-loose but has a half Giroux torsion in the complement. Same is true if all the continued fraction blocks are negative and we negatively stabilize $L_2^0$. Every other combination will loosen the link. Now note that, the complement of $L_1^0\sqcup L_2^0$ contains a continued fraction block of length $|a_1+1|$ from $|a_0+1|$ to $\infty$. We call this path $P$ and the corresponding $L_2^0$, as $(L_2^0)_{i,j}$ where $0\leq i,j\leq |a_1+1|$. For $i=|a_1+1|, j=0$ and $i=0, j=|a_1+1|$, $(L_2^0)_{i,j}$ are the bases of $|a_2+1|\cdots|a_{n-1}|$ loose forward and the same number of loose back slashes. For $1\leq i,j\leq |a_1+2|$, $(L_2)^0_{i,j}$ are bases of $|a_1+2||a_2+1|\cdots |a_{n-1}|$ loose $V$'s.

Now we consider $L_2^1$. The complement of $L_2^1$ contains a path $P_1$ of length $|a_0|$ from $a_0+1$ to $1$ and another continued fraction block $P_2$ of length $|a_1+2|$ from $[a_0,a_1+1]$ to $a_0+1$. Let us suppose now that $P_1$ contains $i$ positive basic slices, $j$ negative basic slices and $P_2$ contains $k$ positive basic slices and $l$ negative basic slices. We call the corresponding $L_2^1$ as $(L_2)^1_{i,j,k,l}$ where $0\leq i,j\leq |a_0|$, $0\leq k,l\leq |a_1+2|.$ As we can consistently shorten the path in the complement,  $L_1\sqcup S_+((L_2^1)_{|a_0|,0,|a_1+2|,0})$ and $L_1\sqcup S_-((L_2^1)_{0,|a_0|,0,|a_1+2|})$ are still non-loose. In fact, one could check $S_+((L_2^1)_{|a_0|,0,|a_1+2|,0})$ coincides with $(L_2^0)_{|a_1+1|,0}$ and  $S_-((L_2^1)_{0,|a_0|,0,|a_1+2|})$ coincides with $(L_2^0)_{0,|a_1+1|}$. For $i=|a_0|, j=0$ and $1\leq k,l\leq |a_1+3|$ and $i=0, j=|a_0|$ and $1\leq k,l\leq |a_1+3|$, the $S_\pm((L_2^0)_{i,j,k,l}$ corresponds to $(L_2^0)_{i,j}$ where $1\leq i,j\leq |a_1+2|$.  When $1\leq i,j\leq |a_0+1|$, any stabilization will give us inconsistent shortening in the complement and $L_1^0\sqcup S_\pm( (L_2^1)_{i,j,k.l})$ will be loose for any $k,l.$ There are the bases of $|a_0+1||a_1+1|\cdots|a_{n-1}|$ many loose $V$'s. 

Now if we fix $L_1$ with $k_1=1 \ \text{or} \ k_1\geq 2$, the mountain range will change. The analysis is exactly the same as before, the only difference is in the number of forward, back slashes and $V$'s as shown in Figure~\ref{fig:Fix_L_1_General}. 
\begin{figure}
 \centering
    \labellist
    \pinlabel (a) at 0 600
    \pinlabel (b) at 0 340
    \pinlabel (c) at 0 50
    \pinlabel $\underbrace{\hspace{3 em}}$ at 145 530
     \pinlabel $\underbrace{\hspace{3 em}}$ at 700 530
       \pinlabel $\underbrace{\hspace{9 em}}$ at 1100 610
        \pinlabel $\textcolor{red}{\underbrace{\hspace{10 em}}}$ at 425 530
      \pinlabel $\underbrace{\hspace{3 em}}$ at 145 260
     \pinlabel $\underbrace{\hspace{3 em}}$ at 700 260
       \pinlabel $\underbrace{\hspace{9 em}}$ at 1100 330
        \pinlabel $\textcolor{red}{\underbrace{\hspace{10 em}}}$ at 425 260
        \pinlabel $\underbrace{\hspace{3 em}}$ at 145 0
     \pinlabel $\underbrace{\hspace{3 em}}$ at 700 0
       \pinlabel $\underbrace{\hspace{9 em}}$ at 1100 50
        \pinlabel $\textcolor{red}{\underbrace{\hspace{10 em}}}$ at 425 0
        
    \tiny
    
    \pinlabel $|a_1||a_2+1|\cdots|a_{n-2}+1|$ at 120 490
   
    \pinlabel $\textcolor{red}{|a_1|\cdots|a_{n-1}+2|}$ at 400 490
    \pinlabel $|a_1|\cdots|a_{n-2}+1|$ at 700 490
    \pinlabel $|a_1||a_2+1|\cdots|a_{n}+1|$ at 1100 590

     \pinlabel $|a_0-1||a_1+1|\cdots|a_{n-2}+1|$ at 120 230
    
    \pinlabel $\textcolor{red}{|a_0-1|\cdots|a_{n-1}+2|}$ at 420 230
    \pinlabel $|a_0-1|\cdots|a_{n-2}+1|$ at 700 230
    \pinlabel $|a_0-1||a_1+1|\cdots|a_{n}+1|$ at 1100 300

  \pinlabel $2|a_0|\cdots|a_{n-2}+1|$ at 120 -20
    
    \pinlabel $\textcolor{red}{2|a_0|\cdots|a_{n-1}+2|}$ at 420 -20
    \pinlabel $2|a_0|\cdots|a_{n-2}+1|$ at 730 -20
    \pinlabel $2|a_0||a_1+1|\cdots|a_{n}+1|$ at 1100 30

    \endlabellist
    \centering
    \includegraphics[scale=0.3]{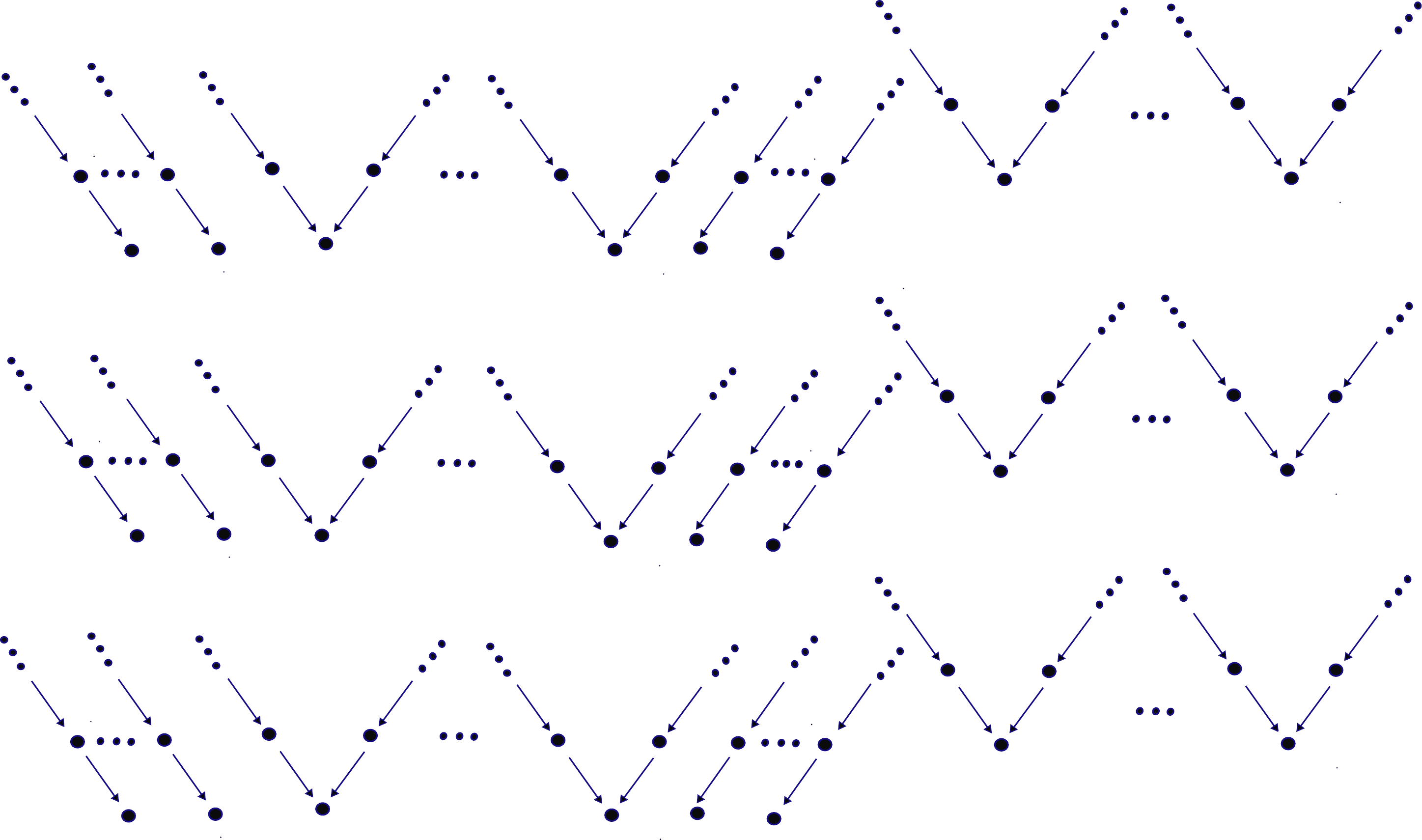}
    \caption{(a), (b) and (c) shows the loose mountain range for $K_1$ when $K_2$ is fixed with $k_2=0$,$k_2=1$,and $k_2\geq 2.$ Here $n\geq 3$.}
    \label{fig:Fix_K_2_general}
\end{figure}

Now one can fix $L_2$ with $k_2=0$ and analyze the loose mountain range of $L_1.$ We will do the detailed analysis for this case and the rest will follow similarly. Now if all the continued fraction blocks in the complement of $L_1^0\sqcup L_2^0$ have the same sign, say positive then  $S_+(L_1^0)\sqcup L_2^0$ will still be non-loose, but will have a half Giroux torsion in the complement. Same is true if the continued fraction blocks are all negative and we negatively stabilize $L_1$. Note that, when we stabilize $L_1^0$ we add a basic slice of boundary slopes $\frac{p'}{q'}$ and $\frac{p''}{q''}$ in the complement. If there is any mix of sign in any of the continued fraction blocks, then any stabilization of $L_1$ will loosen the link. Note that, in the complement of $L_1^0\sqcup L_2^0$, we have a continued fraction block of length $|a_{n-1}+1|$ from $-\frac{p'}{q'}$ to $r$. We denote this path by $P$ and suppose there are $i$ positive basic slices and $j$ negative basic slices. For $i=|a_{n-1}+1|, j=0$,  $(L_1^0)_{i,j}$ are the bases of $|a_1||a_2+1|\cdots|a_{n-2}+1|$ many forward slashes and for $i=0, j=|a_{n-1}+1|$, $(L_1^0)_{i,j}$ are the bases of the same number of backward slashes. For $1\leq i,j\geq |a_{n-1}+2|$, $(L_1^0)_{i,j}$  they will be the bases of $|a_1|\cdots |a_{n-1}+2|$ many \ $V$s. Now when we consider $L_1^1$ the complement contains a continued fraction block of length $|a_n|$ from $s_1$ to $s$ and then another one of length $|a_{n-1}+2|$ from $s$ to $r$. We call the first path $P_1$ and then $P_2$. Suppose $P_1$ contains $i'$ positive signs and $j'$ negative signs and $P_2$ contains  $k$ positive signs and $l$ negative signs. The corresponding $L_1^1$ will be denoted as $(L_1^1)_{i',j',k,l}$ For $i'=|a_n|, j'=0$, $S_+((L_1^1)_{i,j,k,l}$ are non-loose for any $k,l$ and for $i'=0,j'=|a_n|$, $S_+((L_1^1)_{i',j',k,l}$ are non-loose for any $k,l$. These will be part of the forward, back slash and the $V$'s from above. For $i'=|a_n|,j'=0,k=|a_{n-1}+2|$ and $l=0$ $S_+((L_1^1)_{i',j',k,l}$ is same as $(L_1^0)_{|a_{n-1}+1|,0}$. On the other hand, for $i'=0,j'=|a_n|, k=0, l=|a_{n-1}+2|$, $S_-((L_1^1)_{i',j',k,l}$ is same as $(L_1^0)_{0,|a_{n-1}+1|}$. When $P_1$ has all same (positive or negative) sign and $1\leq k,l\leq |a_{n-1}+3|$, $S_\pm((L_1^1)_{i',j'k,l})$ will coincide with $(L_1^0)_{i,j}$ where $1\leq i,j\leq |a_{n-1}+2|.$ Finally for $1\leq i',j'\leq |a_n+1|$, $(L_1^1)_{i',j',k,l}$ are the bases of $|a_1||a_2+1|\cdots |a_n+1|$ loose $V$'s. 

For fixed $K_2$ with $k_2=1, k_2\geq 2$, the mountain range is given in Figure~\ref{fig:Fix_K_2_general} (b), (c). The analysis is very similar to the $k_2=0$ case, so we omit the details here.


The rational Thurston--Bennequin invariants of the components are given by $\tb_\mathbb{Q}(L_1^{s_{k_1}})= k_1+\frac{p'}{p}$ and $\tb_\mathbb{Q}(L_1^{s_{k_2}})=k_2+\frac{q}{p}$. The rotation number and the Euler classes can be computed as before. This completes the proof of Theorem~\ref{thm:general}.
\begin{center}
\begin{table}

  \begin{tabular}{||c||c||c||c||c||}
  \hline
\label{table:general}
$k_1$ &$k_2$ & $n=1$ & $n=2$ &$n> 2$ \\
     \hline
      0  &0 &$2$ &$|a_1-1|$ &$|a_1||a_2+1|\cdots|a_{n-1}|$\\
      \hline
      0  &1 &$|a_0-2|$ & &$|a_0-1||a_1+1|\cdots|a_{n-1}|$\\
      \hline
      0   &>1 &$2|a_0-1|$ & &$2|a_0||a_1+1|\cdots|a_{n-1}|$ \\
      \hline
      1 &0 &$|a_1-2|$ & &$|a_1||a_2+1|\cdots|a_{n-1}+1||a_n-1|$\\
      \hline
      1 &1 & & &$|a_0-1||a_1+1|\cdots|a_n-1|$\\
      \hline
      1 &>1 & & &$2|a_0||a_1+1|\cdots|a_{n-1}+1||a_n-1|$ \\
      \hline
      >1&0 &$2|a_1-1|$ & &$2|a_1||a_2+1|\cdots|a_n|$\\
      \hline
      >1 &1 & & &$2|a_0-1||a_1+1|\cdots|a_{n-1}+1||a_n|$\\
      \hline
      >1 &>1 & & &$4|a_0||a_1+1|\cdots|a_{n-1}+1||a_n|$\\
      \hline

\end{tabular}
 \caption{Number of non-loose Hopf links in $L(p,q)$ with both components loose where $-\frac{p}{q}=[a_0, a_1,\cdots, a_{n}]$}
\end{table}

\end{center}

\end{proof}

\begin{figure}
 \labellist
\small\hair 2pt
\pinlabel$\Bigg{\{}$  at 20 430
\pinlabel$\Bigg{\}}$  at 250 430

 \pinlabel$\Bigg{\{}$  at 20 230
\pinlabel$\Bigg{\}}$  at 250 230
 \pinlabel $\Bigg\}$ at 580 200

 \pinlabel $\Bigg\{$ at 400 200

 \pinlabel $\Biggl\}$ at 580 350
 \pinlabel $\Biggr\{$ at 380 350
\tiny
\pinlabel{(a)} at 145 -10

\pinlabel {\textcolor{blue}{$L_1$}} at 280 500
\pinlabel $\textcolor{red}{L_2}$ at 240 130

\pinlabel$m$ at 280 430
 \pinlabel${n-m}$ at -38 430 

 \pinlabel {(b)} at 515 -10
 \pinlabel $-1$ at 600 400
 \pinlabel $+1$ at 600 450
 
  \pinlabel $n-1-m$ at 310 350
  \pinlabel $m$ at 620 350
   \pinlabel $l$ at 610 200

 \pinlabel $\textcolor{blue}{L_1}$ at 590 500
 \pinlabel $\textcolor{red}{L_2}$ at 620 130
 \pinlabel (c) at 855 -10
 \pinlabel $\textcolor{blue}{L_1}$ at 950 740
 \pinlabel $\textcolor{red}{L_2}$ at 940 120
 \pinlabel $+1$ at 950 690
 \pinlabel $-1$ at 950 650
 \pinlabel $k_1-2$ at 1000 570
 \pinlabel $-1$ at 730 470
 \pinlabel $-1$ at 730 420
 \pinlabel $-1$ at 730 370
 \pinlabel $-1$ at 730 520
  \pinlabel $\Biggl\{$ at 720 320
 \pinlabel $\Biggr\}$ at 940 320
 \pinlabel $\Bigg\{$ at 720 180

 \pinlabel $\Bigg\}$ at 940 180
 \tiny

 \pinlabel $+1$ at 20 350
\pinlabel $+1$ at 20 330

\pinlabel $l$ at -20 230
 \pinlabel $k_2-1-l$ at 320 230
 \pinlabel $l$ at 680 180
 \pinlabel $k_2-1-l$ at 1020 180
 \pinlabel $n-1-m$ at 1020 320
 \pinlabel $m$ at 680 320
\endlabellist
    \centering
    \includegraphics[width=0.7\linewidth]{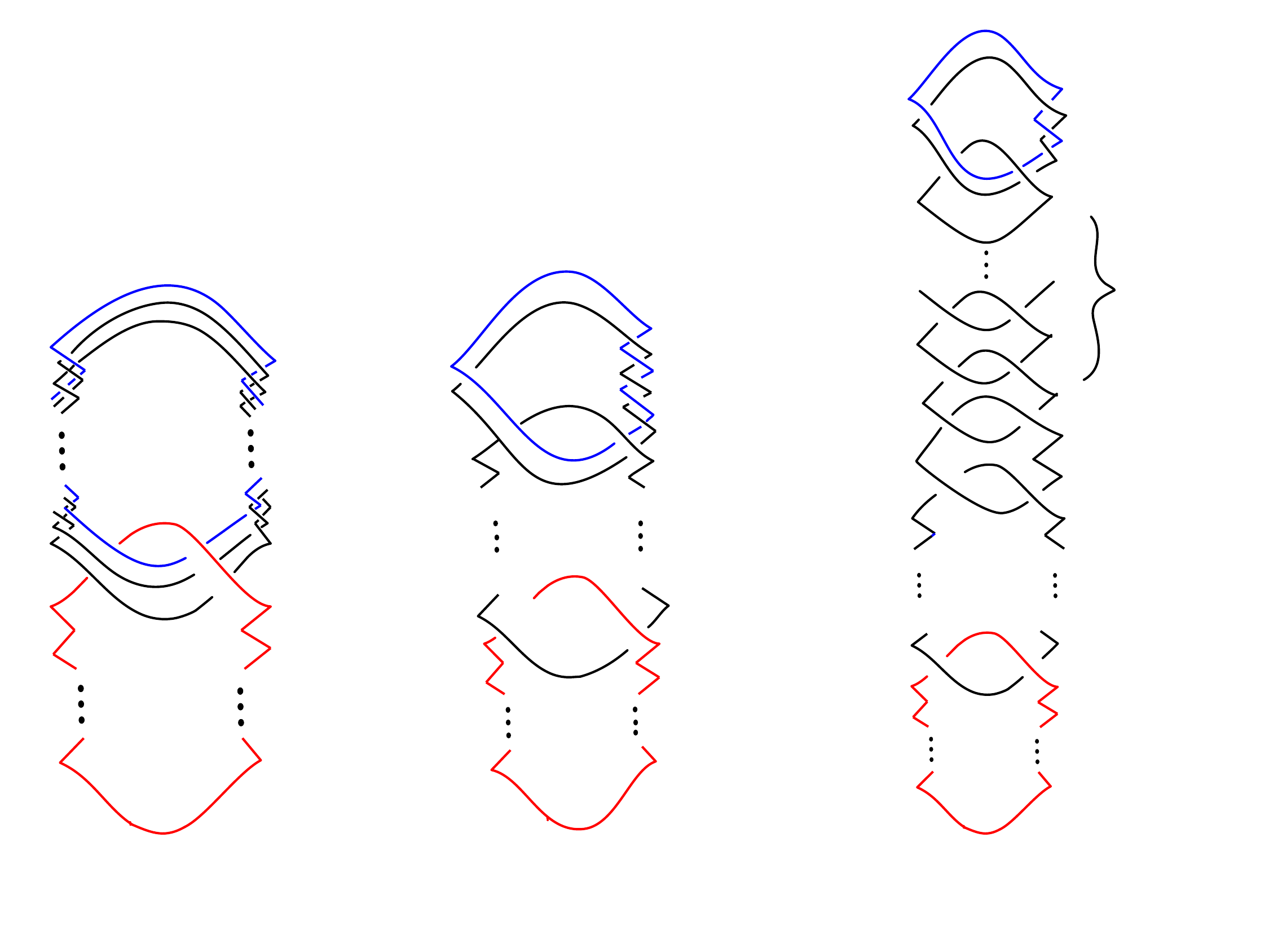}
    \caption{ Non-loose $L_1\sqcup L_2$ in $L(2n+1,2)$ with $\tbr(L_1)=k_1+\frac{n+1}{2n+1}$ and $\tbr(L_2)=-k_2+\frac{2}{2n+1}$. $L_1$ is non-loose and $L_2$ is loose.(a) $k_2(n+1)$ candidates with  $k_1=0$ and $k_2\geq 1$ (b) $3nk_2$ candiadates with $k_1=1, k_2\geq 1$ and (c) $4nk_2$ candidates with $k_1\geq 2$ and $k_2\geq 1.$}
    \label{fig:nonlooseandlooseinL(2n+ 1)}
\end{figure}

\begin{proof}{Proof of Theorem~\ref{thm:Giroux_torsion}} Note that, to add Giroux torsion in the link complement we remove a tubular neighborhood of $L_1\sqcup L_2$ and add a Giroux torsion layer with $m\pi$ twisting. The complement of $L_1\sqcup L_2$ is a $T^2\times I$ which is a union of continued fraction blocks. Clearly, if there is any mismatch of sign once we add a layer, that leads to an inconsistent shortening and the complement will become overtwisted. Thus except the one case when $q=1$ and $k_1=0=k_2$, there are exactly two non-loose $L_1\sqcup L_2$, one with the complement having only positive continued fraction blocks and one with only negative union of continued fraction blocks. In these two cases we can add any layer of torsion (consistent with the sign of the continued fraction blocks) and the complement is still tight. Notice that, when $q=1\ \text{and}\  k_1=k_2=0,$ the complement is $I$-invariant. So there is only one such non-loose Hopf link and we can add any twising in the complement keeping the complement tight. The computations of $\tbr$ are same as before. One could check the two $L_1\sqcup L_2$ can be distinguished by their rotation numbers. This finishes the proof. 
\end{proof}

\begin{remark}
    The above theorem recovers the result of \cite{Geiges_Onaran} for $p=1$.
\end{remark}

\section{Contact surgery diagrams for non-loose Hopf links in $L(p,q)$}
\label{sec:contactsurgery}\subsection{Contact surgery diagrams for non-loose Hopf links in $L(p,1)$} The explicit diagrams are already shown in \cite{chatterjee2025_Hopf}. The only explicit surgery diagrams missing in \cite{chatterjee2025_Hopf} were the $2$ reali\-zations with $\tbr(L_1)=\frac{1}{p}$ and $\tbr=1+\frac{1}{p}.$ (a) in Figure \ref{fig:Loose_General2} shows those $2$ explicit realizations. 
\begin{remark}
   We have an explicit algorithm for the Legendrian realizations of the non-loose Hopf links in $L(p,1)$ as well. The algorithm for $L(p,1)$ is quite similar to the general case.  But as the diagrams are already given in \cite{chatterjee2025_Hopf}, we are not including it here.
\end{remark}



\subsection{Contact surgery diagrams for non-loose Hopf links in $L(2n+1,2)$} Figure \ref{fig:nonlooseandlooseinL(2n+ 1)} shows Legendrian representations of non loose Hopf links in $L(2n+1,2)$ where $L_1$ is non-loose and $L_2$ is loose. To see this, one could do a sequence of $-1$ surgeries on $L_1$ that cancels the $+1$ surgeries and we see a tight manifold in the complement. Figure \ref{fig:loosenonlooseL(2n+1)} shows the non-loose Hopf links with $L_1$ loose and $L_2$ non-loose. Finally, Figure \ref{fig:looseloose1_L(2n+1,2)} and \ref{fig:looseloose2_L(2n+1,2)} show all the non-loose representations with loose-loose component. There does not exist a non-loose Hopf link with non-loose/non-loose components in $L(2n+1,2)$. One can do a sequence of Kirby moves to see indeed these are Hopf links in $L(2n+1,2).$

\begin{figure}
\labellist
\small\hair 2pt
\tiny
\pinlabel{(a)} at 125 0
\pinlabel $+1$ at 10 160
\pinlabel $+1$ at 10 130
\pinlabel $\Big{\}}$ at 240 150
\pinlabel $n+2$ at 280 150
\pinlabel $-1$ at 10 200
\pinlabel $\textcolor{blue}{L_1}$ at 20 370
\pinlabel $\textcolor{red}{L_2}$ at 10 70
\pinlabel$\Bigg{\{}$  at 10 280
\pinlabel$\Bigg{\}}$  at 230 280
\pinlabel$m$ at 250 280
 
 \pinlabel {(b)} at 445 -10
 \pinlabel $\Bigg{\{}$ at 350 260
 \pinlabel $m$ at 320 260
 \pinlabel $n+1-m$ at 585 260
 \pinlabel $\Biggr\}$ at 520 260
 \pinlabel $\Bigg{\{}$ at 330 440
 \pinlabel $k_1-l$ at 280 440
 \pinlabel $\Biggr\}$ at 540 440
 \pinlabel $l$ at 560 440
 \pinlabel $+1$ at 550 190
 \pinlabel $+1$ at 550 160
 \pinlabel $-1$ at 530 320
 \pinlabel $-1$ at 530 360
 \pinlabel $\textcolor{blue}{L_1}$ at 530 550
 \pinlabel $\textcolor{red}{L_2}$ at 550 70
 \pinlabel (c) at 720 -10
  \pinlabel $\Bigg{\{}$ at 630 640
 \pinlabel $k_1-l$ at 580 640
 \pinlabel $\Biggr\}$ at 800 640
 \pinlabel $l$ at 820 640
 \pinlabel $\textcolor{blue}{L_1}$ at 820 690
 \pinlabel $-1$ at 820 540
 \pinlabel $-1$ at 820 500
 \pinlabel $\Biggr\}$ at 820 440
 \pinlabel $\Bigg\{$ at 640 440
 \pinlabel $m$ at 610 440
 \pinlabel $n-m$ at 870 440
 \pinlabel $-1$ at 820 370
 \pinlabel $k_2-2$ at 880 290
 \pinlabel $\vdots$ at 730 290
 \pinlabel $-1$ at 830 190
 \pinlabel $-1$ at 830 150
 \pinlabel $+1$ at 830 120
 \pinlabel $+1$ at 830 100
 \pinlabel $\textcolor{red}{L_2}$ at 850 70
\tiny
 \pinlabel${k_1-m}$ at -40 280 
\endlabellist
    \centering
    \includegraphics[scale=0.3]{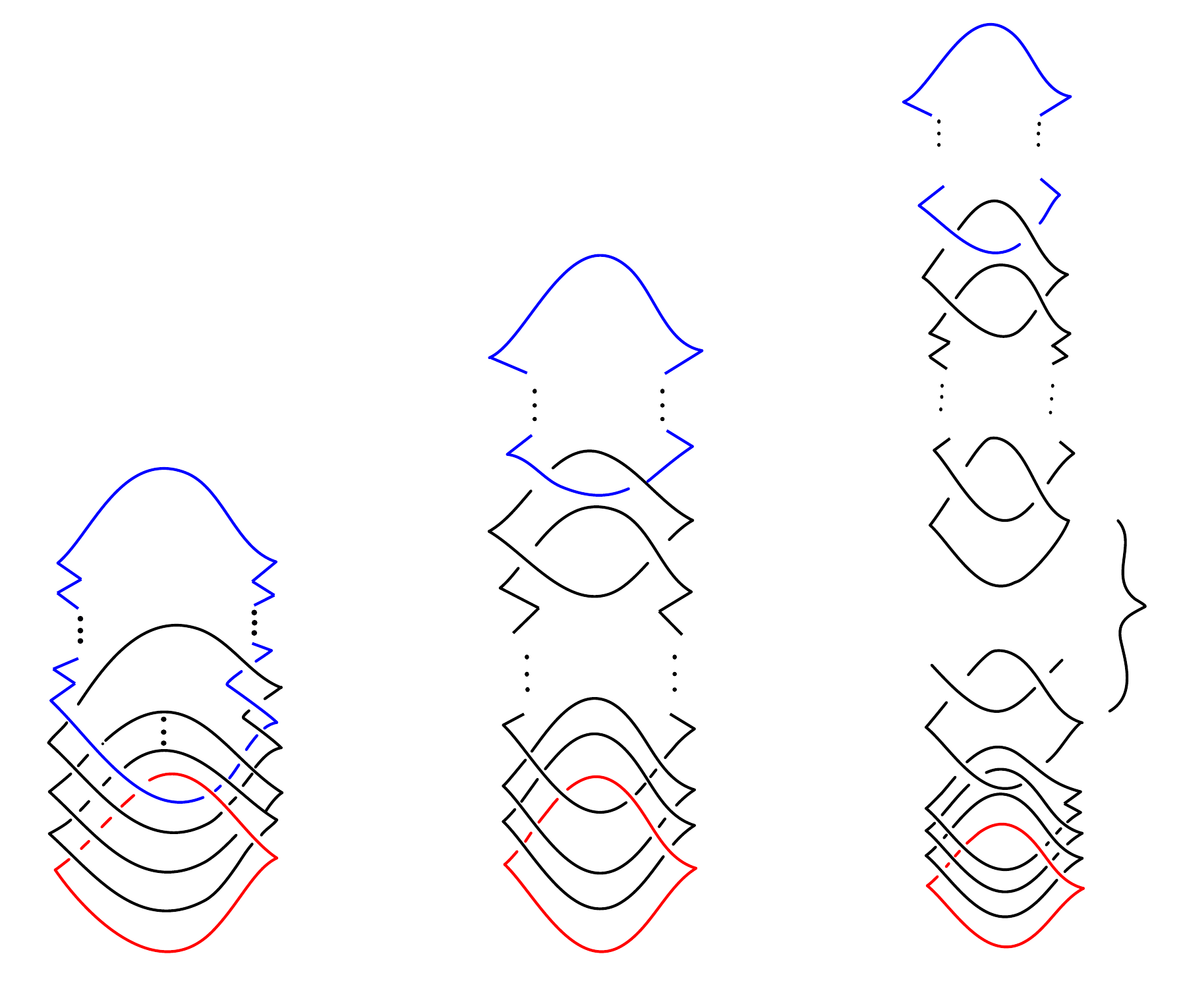}
    \caption{The non-loose Hopf link with $L_1$ loose and $L_2$ non-loose with $\tbr=-(k_1+1)+\frac{n+1}{2n+1}$ and $\tbr=k_2+\frac{2}{2n+1}$. (a) $2(k_1+1)$ candidates with $k_1=0, k_2\geq 0$. (b) $(k_1+1)(n+2)$ candidates with $k_1\geq 0, k_2=1$ and (c) $2(k_1+1)(n+1)$ candidates with $k_1\geq 0, k_2\geq 2.$}
    \label{fig:loosenonlooseL(2n+1)}
\end{figure}

\begin{figure}[!htbp]
\labellist
\small\hair 2pt

\pinlabel{(a)} at 105 400
\pinlabel (b) at 405 400
\pinlabel (c) at 135 0
\pinlabel (d) at 405 0
\pinlabel (e) at 670 100
\pinlabel (f) at 890 100

\pinlabel $[\frac{1}{3}]$ at 200 650
\pinlabel $\textcolor{blue}{L_1}$ at 200 700

\pinlabel $\textcolor{red}{L_2}$ at 150 450

\pinlabel $\textcolor{blue}{L_1}$ at 500 700
\pinlabel $\textcolor{red}{L_2}$ at 470 450

\pinlabel $\textcolor{blue}{L_1}$ at 600 530
\pinlabel $\Bigg\{$ at 600 400
\pinlabel $\textcolor{red}{L_2}$ at 730 130
\pinlabel $\textcolor{blue}{L_1}$ at 200 330
\pinlabel $[+\frac{1}{2}]$ at 0 280
\pinlabel $\textcolor{red}{L_2}$ at 170 20
\pinlabel $\Bigg\{$ at 20 220
\pinlabel $\Bigg\}$ at 220 220

\pinlabel $\textcolor{red}{L_2}$ at 470 20
\pinlabel $\textcolor{blue}{L_1}$ at 470 330
\pinlabel $\Bigg\{$ at 280 220
\pinlabel $\Bigg\}$ at 460 220

\pinlabel $\textcolor{red}{L_2}$ at 940 120
\pinlabel $\textcolor{blue}{L_1}$ at 940 740
\pinlabel $\Bigg\{$ at 820 300
\pinlabel $\Bigg\}$ at 960 300
\pinlabel $\Bigg\{$ at 800 550

\tiny
\pinlabel$\Bigg\}$ at 200 560
\pinlabel $\Bigg\}$ at 500 550
\pinlabel $[+1]$ at 0 550
\pinlabel $[+1]$ at 0 580
\pinlabel $n+3$ at 240 560

\pinlabel $n+2$ at 530 560
\pinlabel $[+1]$ at 770 500
\pinlabel $[-1]$ at 770 460
\pinlabel $[-1]$ at 770 320
\pinlabel $[-1]$ at 770 280
\pinlabel $[+1]$ at 770 230
\pinlabel $[+1]$ at 770 200
\pinlabel $\Bigg\{$ at 610 210
\pinlabel $n+2$ at 580 210
\pinlabel $k_1-2$ at 550 400
\pinlabel $[+1]$ at 10 140
\pinlabel $[+1]$ at 10 120
\pinlabel $n+2-m$ at -30 230
\pinlabel $m$ at 250 220
\pinlabel $n+1-m$ at 515 220
\pinlabel $[+1]$ at 980 210
\pinlabel $[+1]$ at 980 240
\pinlabel $n+1-m$ at 1020 300
\pinlabel $m$ at 800 300
\pinlabel $[-1]$ at 980 340
\pinlabel $[-1]$ at 980 390
\pinlabel $[-1]$ at 980 440
\pinlabel $[-1]$ at 980 490
\pinlabel $[-1]$ at 980 610
\pinlabel $[+1]$ at 980 670
\pinlabel $k_1-2$ at 760 550
\endlabellist
    \centering
    \includegraphics[width=0.8\linewidth]{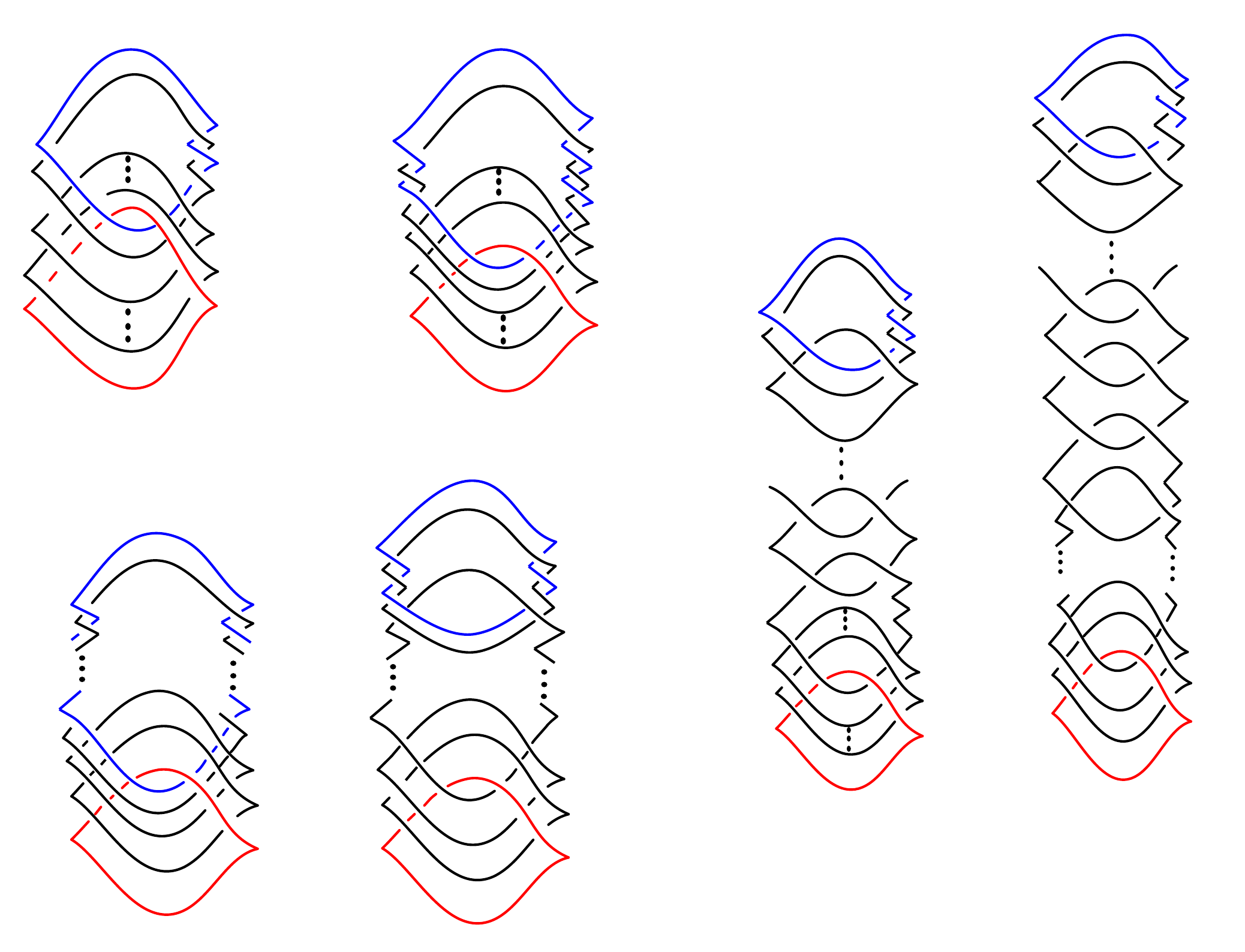}
    \caption{The non-loose Hopf link in $L(2n+1,2)$ where both $L_1$ and $L_2$ are loose with $\tbr(L_1)=k_1+\frac{n+1}{2n+1}$ and $\tbr(L_2)=k_2+\frac{2}{2n+1}$.(a) $2$ candidates with $k_1=k_2=0$ (b) $4$ candidates with $k_1=1, k_2=0$. (c) $n+3$ candidates with $k_1=0, k_2=1.$ (d) $3(n+2)$ candidates with $k_1=k_2=0.$  (e) $6$ candidates with $k_1\geq 2, k_2=0.$ (f) $2(n+2)$ candidates with $k_1\geq 2, k_2=1.$}
    \label{fig:looseloose1_L(2n+1,2)}
\end{figure}


\begin{figure}[!htbp]
\labellist
\small\hair 2pt
\tiny
\pinlabel (a) at 120 -10
\pinlabel $\textcolor{red}{L_2}$ at 20 150

\pinlabel $k_1-2$ at 240 440
\pinlabel $[+1]$ at 25 195
\pinlabel $[+1]$ at 25 215
\pinlabel $[-1]$ at 20 270
\pinlabel $[-1]$ at 20 320 
\pinlabel $[-1]$ at 20 380
\pinlabel $[-1]$ at 20 530
\pinlabel $[+\frac{1}{2}]$ at 20 580
\pinlabel $\textcolor{blue}{L_1}$ at 30 720
\pinlabel $\Biggr\{$ at 25 630
\pinlabel $\Biggl\}$ at 170 630
\pinlabel $m$ at 5 630
\pinlabel $n+1-m$ at 220 630
\pinlabel (b) at 380 -10
\pinlabel $k_1-2$ at 500 390
\pinlabel $\textcolor{red}{L_2}$ at 300 100
\pinlabel $[+1]$ at 300 150
\pinlabel $[+1]$ at 300 170
\pinlabel $[-1]$ at 300 230
\pinlabel $[-1]$ at 300 290
\pinlabel $[-1]$ at 300 340
\pinlabel $[-1]$ at 300 500
\pinlabel $[-1]$ at 300 630
\pinlabel $\Bigg\{$ at 300 570
\pinlabel $m$ at 280 570
\pinlabel $\Biggl\}$ at 450 570
\pinlabel $n-m$ at 490 570
\pinlabel $[+1]$ at 280 670
\pinlabel $\textcolor{blue}{L_1}$ at 300 720

\pinlabel (c) at 620 -10

\pinlabel $k_2-2$ at 775 270
\pinlabel $\textcolor{red}{L_2}$ at 550 30
\pinlabel $[+1]$ at 550 80
\pinlabel $[+1]$ at 550 110
\pinlabel $[-1]$ at 550 150
\pinlabel $[-1]$ at 550 190
\pinlabel $[-1]$ at 540 330
\pinlabel $[-1]$ at 540 370
\pinlabel $[-1]$ at 540 500
\pinlabel $[-1]$ at 540 530
\pinlabel $\Biggr\{$ at 550 410
\pinlabel $m$ at 530 410
\pinlabel $\Biggl\}$ at 670 410
\pinlabel $n-m$ at 720 410
\pinlabel $[-1]$ at 530 660
\pinlabel $k_1-2$ at 760 590
\pinlabel $[+1]$ at 530 710
\pinlabel $\textcolor{blue}{L_1}$ at 580 770

\endlabellist
    \centering
    \includegraphics[width=0.7\linewidth]{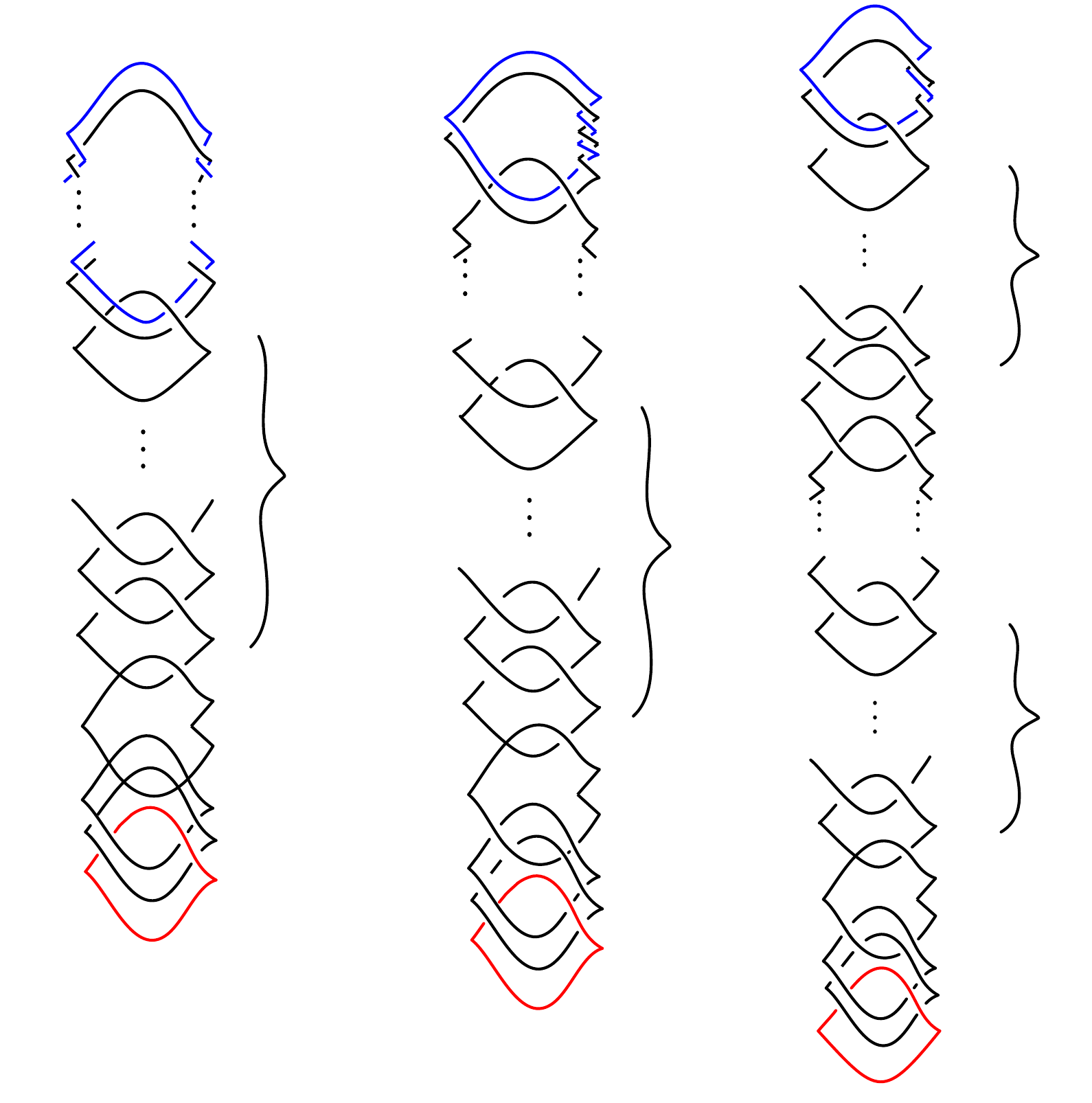}
    \caption{ The non-loose Hopf link in $L(2n+1,2)$ with both components loose. (a) $2(n+2)$  candidates with $k_1=0, k_2\geq 2$  (b) $6(n+1)$  candidates with $k_1=1, k_2\geq 2.$ (c) $8(n+1)$ candidates with $k_1\geq 2 \ \text{and}\ k_2\geq 2.$ }
    \label{fig:looseloose2_L(2n+1,2)}
\end{figure}

\subsection{General surgery diagrams}
Here we give an algorithm on how to find the contact surgery representations of the rational Hopf links. Note that, this is just one way to find the representatives. We could have infinitely many possibilities for the Legendrian Hopf links but all of them are related by contact handle slide and Legendrian isotopy. We will give a detailed algorithm for the first and the second case and the rest follow similarly.

\subsection*{ Case 1: small $\cup$ large slope}
\label{subsec:small_large_algo}
 We give a detailed analysis for $\frac{p}{q}=[a_0,a_1,\cdots , a_n]$. When $k_2=0$ and $n=1$ the analysis is slightly different and we leave it to the reader. Note that, when considering small slope for $K_1$ and large slope for $K_2$, the path that describes the contact structure of the complement of $L_1\sqcup L_2$ has endpoints $s_{k_1}$ and $s_{k_2}$ where $s_{k_1}<0$ but $s_{k_2}>0$, thus we have a non-uniform twisting in the complement. To solve this issue, we apply $k_2+1$ fold Rolfsen twist to the slopes. The $k_2+1$ fold Rolfsen twist matrix is given by $$\begin{pmatrix}
    1 & -(k_2+1)\\
    0 &1
\end{pmatrix}$$

\begin{figure}
\labellist
\small\hair 2pt
\pinlabel $\vdots$ at 100 550
\pinlabel $\vdots$ at 0 550
\pinlabel $\vdots$ at 410 550
\pinlabel $\vdots$ at 300 550
\pinlabel $\vdots$ at 730 650
\pinlabel $\vdots$ at 740 250
\pinlabel $\vdots$ at 850 600
\tiny
\pinlabel (a) at 130 -20
\pinlabel $\textcolor{red}{L_2}$ at 20 70
\pinlabel $[+1]$ at 20 95
\pinlabel $[+1]$ at 20 120
\pinlabel $\Bigg\}$ at 210 120
\pinlabel $\textcolor{red}{|a_0-1|}$ at 260 120
\pinlabel $|a_1+1|$ at 0 220
\pinlabel $|a_2+1|$ at 0 370
\pinlabel $[-1]$ at 220 600
\pinlabel $[-1]$ at 220 370
\pinlabel $[-1]$ at 220 220
\pinlabel $|a_n+2|$ at -10 600
\pinlabel $\textcolor{blue}{L_1}$ at 20 700


\pinlabel (b) at 430 -20
\pinlabel $\textcolor{red}{L_2}$ at 320 70
\pinlabel $[+1]$ at 540 95
\pinlabel $[+1]$ at 540 120

\pinlabel $|a_0|$ at 310 220
\pinlabel $|a_2+1|$ at 310 370
\pinlabel $[-1]$ at 520 600
\pinlabel $[-1]$ at 520 370
\pinlabel $[-1]$ at 530 200
\pinlabel $|a_n+2|$ at 310 600
\pinlabel $\textcolor{blue}{L_1}$ at 320 700

\pinlabel (c) at 730 -20
\pinlabel $\textcolor{red}{L_2}$ at 650 50
\pinlabel $[+1]$ at 650 70
\pinlabel $[+1]$ at 650 90
\pinlabel $[-1]$ at 650 110
\pinlabel $[-1]$ at 650 160

\pinlabel $[-1]$ at 650 310
\pinlabel $[-1]$ at 650 410
\pinlabel $[-1]$ at 650 510
\pinlabel $[-1]$ at 650 650
\pinlabel $\textcolor{blue}{L_1}$ at 650 750
\pinlabel $k_2-2$ at 900 240
\pinlabel $|a_0+1|$ at 850 400
\pinlabel $|a_1+2|$ at 850 520

\pinlabel $|a_n+2|$ at 850 670

\endlabellist
    \centering
    \includegraphics[width=0.6\linewidth]{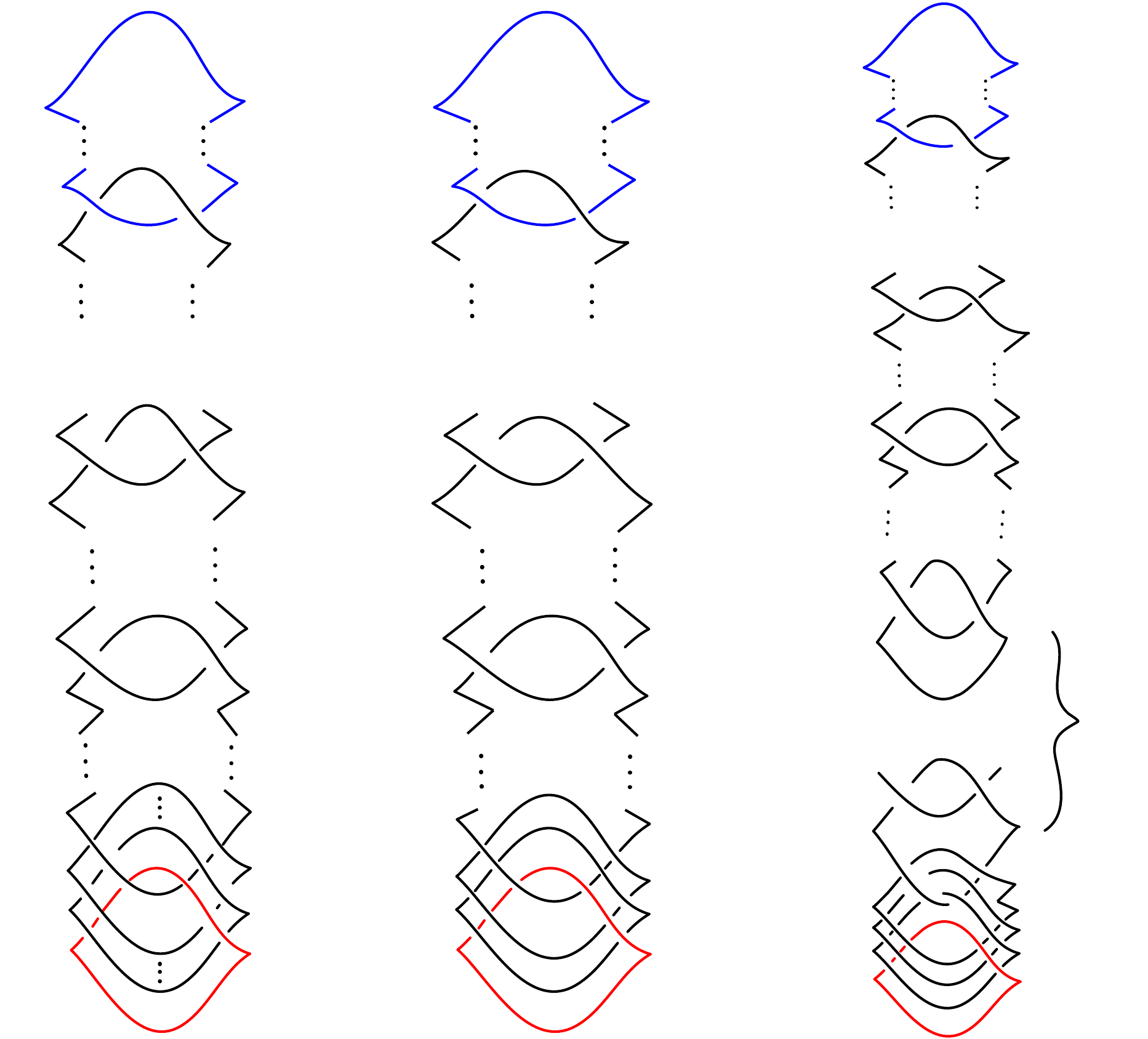}
    \caption{Legendrian repressentaives of non-loose Hopf links in $L(p,q)$ where $L_1$ is loose and $L_2$ non-loose. (a) $k_2=0$ and $n\geq 2$ (b)$k_2=1$ (c) $k_2\geq 2$. In all the cases $k_1\geq 0$. The value $|a_i+1|$ in the diagram denotes the number of stabilization of that component.}
    \label{fig:general_loose_non-loose}
\end{figure}

\begin{figure}[!htbp]
    \centering
    \labellist
    \small\hair 2pt
     \pinlabel $\}$ at 200 640
      \pinlabel $\vdots$ at 230 380
      \pinlabel $\vdots$ at 110 480
        \pinlabel $\vdots$ at 580 380
      \pinlabel $\vdots$ at 460 480
        \pinlabel $\vdots$ at 780 460
      \pinlabel $\vdots$ at 920 380
    \tiny
   \pinlabel (a) at 120 -20
    \pinlabel $\textcolor{blue}{L_1}$ at 0 700
   
    \pinlabel $\textcolor{red}{|a_n|}$ at 230 640
    \pinlabel $[+1]$ at 0 640
    \pinlabel $[+1]$ at 0 620
    \pinlabel $|a_{n-1}+1|$ at 230 580
    \pinlabel $|a_{n-2}+2|$ at 230 480
    \pinlabel $[-1]$ at 0 480
    \pinlabel $|a_1+2|$ at 230 300
    \pinlabel $[-1]$ at 0 300
    \pinlabel $|a_0+2|$ at 230 190
    \pinlabel $[-1]$ at 0 190
    \pinlabel $\textcolor{red}{L_2}$ at 230 20
    \pinlabel $\Bigg\{$ at 30 80
    \pinlabel $k_2-1-m$ at -40 80
    \pinlabel $\Bigg\}$ at 210 80
    \pinlabel $m$ at 230 80

    \pinlabel (b) at 480 -20
    \pinlabel $\textcolor{blue}{L_1}$ at 360 700
    \pinlabel $[+1]$ at 360 640
    \pinlabel $|a_n|$ at 560 620
    \pinlabel $|a_{n-1}+2|$ at 580 500
    \pinlabel $[-1]$ at 360 500
    \pinlabel $|a_1+2|$ at 580 300
    \pinlabel $[-1]$ at 360 300
    \pinlabel $|a_0+2|$ at 580 190
    \pinlabel $[-1]$ at 360 190
    \pinlabel $\Bigg\{$ at 390 80
    \pinlabel $k_2-1-m$ at 320 80
    \pinlabel $\Bigg\}$ at 560 80
    \pinlabel $m$ at 580 80
    \pinlabel $\textcolor{red}{L_2}$ at 560 20
    \pinlabel (c) at 790 -20
    \pinlabel $k_1-2$ at 920 610
    \pinlabel $\textcolor{blue}{L_1}$ at 700 770
    \pinlabel $[+1]$ at 680 730
    \pinlabel $[-1]$ at 680 680
    \pinlabel $[-1]$ at 680 550
    \pinlabel $[-1]$ at 680 450
    \pinlabel $|a_n+1|$ at 920 450
    \pinlabel $[-1]$ at 680 300
    \pinlabel $|a_1+2|$ at 920 300
    \pinlabel $|a_0+2|$ at 920 200
    \pinlabel $[-1]$ at 680 200
    \pinlabel $\Bigg\{$ at 680 80
    \pinlabel $m$ at 650 80
    \pinlabel $\Bigg\}$ at 880 80
    \pinlabel $k_2-1-m$ at 950 80
    \pinlabel $\textcolor{red}{L_2}$ at 900 20
    
    \endlabellist
    \includegraphics[scale=0.3]{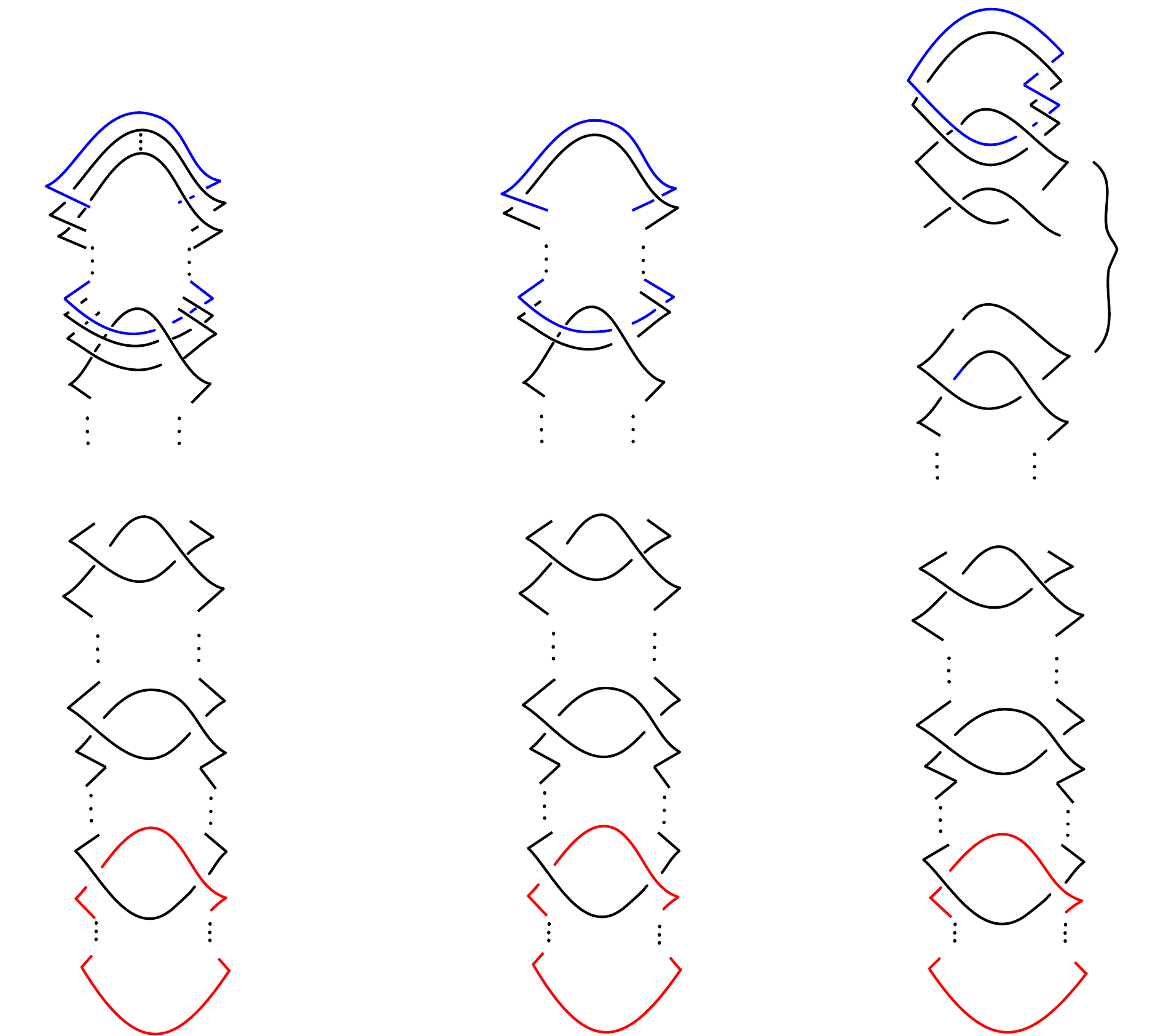}
    \caption{Non-loose Hopf links in $L(p,q)$ where $L_1$ is non-loose and $L_2$ is loose. (a) $k_1=0$ (b) $k_1=1$, $n\geq 0$ (c) $k_1\geq 2$ and $-p/q=[a_0,a_1,\cdots a_n].$ In all the cases $k_2\geq 1$.  }
    \label{fig:nonloose_loose_general}
\end{figure}

\begin{figure}[!htbp]
    \centering
    \labellist
     \pinlabel (i) at -20 1230
     \pinlabel (ii) at -20 1050
     \pinlabel (iii) at -20 860
     \pinlabel (iv) at -20 700
     \pinlabel (v) at -20 470
\pinlabel (vi) at -20 250
     \pinlabel (vii) at -20 100
    \tiny
    \pinlabel $\textcolor{blue}{L_1}$ at 120 90
    \pinlabel $a_n$ at 160 110
    \pinlabel $a_{n-1}$ at 240 110
    \pinlabel $\dots$ at 320 50
    \pinlabel $a_1$ at 440 110
    \pinlabel $a_0$ at 500 110
    \pinlabel $\textcolor{red}{L_2}$ at 550 90
 
    \pinlabel $\textcolor{blue}{L_1}$ at 160 1190
    \pinlabel $-1$ at 210 1180
    \pinlabel $\ddots$ at 130 1240
    \pinlabel $a_{n-1}+1$ at 95 1170
    \pinlabel $\underbrace{a_{n-1}+1/\dots /a_{n-1}+1}$ at 100 1100
    \pinlabel $|a_{n}|$ at 100 1080
    \pinlabel $a_{n-2}$ at 260 1240
    \pinlabel $\dots$ at 375 1180
    \pinlabel $a_1$ at 460 1240
    \pinlabel $a_0$ at 540 1240
    \pinlabel $\textcolor{red}{L_2}$ at 640 1220

    \pinlabel $\textcolor{blue}{L_1}$ at 175 1020
    \pinlabel $1$ at 120 1030
    \pinlabel $1$ at 120 970
    \pinlabel $\ddots$ at 150 1060
    \pinlabel $\Bigg\{$ at 80 1000
    \pinlabel $|a_{n-1}+1|$ at 30 1000
    \pinlabel $+1$ at 240 960
    \pinlabel $a_{n-2}+1$ at 290 1070
    \pinlabel $\dots$ at 375 1000
    \pinlabel $a_1$ at 460 1070
     \pinlabel $\underbrace{1/1/\dots/\textcolor{red}{1}}$ at 80 930
     \pinlabel $|a_n|$ at 80 910
    \pinlabel $a_0$ at 540 1070
    \pinlabel $\textcolor{red}{L_2}$ at 640 1030
     \pinlabel $\textcolor{blue}{L_1}$ at 115 870
    \pinlabel $+1$ at 230 780
    \pinlabel $\Bigg\{$ at 50 810
    \pinlabel $|a_{n-1}+1|$ at 0 810
    \pinlabel $1$ at 105 820
      \pinlabel $1$ at 105 800
       \pinlabel $\underbrace{1/1/\dots/\textcolor{red}{1}}$ at 80 740
       \pinlabel $|a_n|$ at 80 720
    \pinlabel $a_{n-2}+1$ at 290 900
    \pinlabel$\ddots$ at 140 890
    \pinlabel $\dots$ at 365 830
    \pinlabel $a_1$ at 460 900
    \pinlabel $a_0$ at 540 900
    \pinlabel $\textcolor{red}{L_2}$ at 640 850
     \pinlabel $\textcolor{blue}{L_1}$ at 70 620
    \pinlabel $+1$ at 290 600
    \pinlabel $a_{n-2}+1$ at 320 700
    \pinlabel $\underbrace{2/2/\dots/2}$ at 80 550
    \pinlabel $|a_n+1|$ at 80 530
    \pinlabel $1$ at 170 710
    \pinlabel $1$ at 220 710
    \pinlabel $\dots$ at  15 620
    \pinlabel $+1$ at 120 620
    \pinlabel $\textcolor{red}{1}$ at 250 700
    \pinlabel $\overbrace{}$ at 195 720
    \pinlabel $|a_{n-1}+1|$ at 195 740
    \pinlabel $\dots$ at 420 640
    \pinlabel $a_1$ at 490 700
    \pinlabel $a_0$ at 560 700
    \pinlabel $\textcolor{red}{L_2}$ at 640 590                          
     \pinlabel $\textcolor{blue}{L_1}$ at 60 380
     \pinlabel $\textcolor{red}{a_{n-1}+1}$ at 230 510
    \pinlabel $-1$ at 160 380
    \pinlabel $+1$ at 300 380
    \pinlabel $\underbrace{1/1/\dots/1}$ at 70 320
    \pinlabel $|a_n+1|$ at 70 300
    \pinlabel $a_{n-2}+1$ at 330 490
    \pinlabel $\dots$ at 430 420
    \pinlabel $a_1$ at 500 490
    \pinlabel $a_0$ at 560 490
    \pinlabel $\textcolor{red}{L_2}$ at 640 470
    
    \pinlabel $a_{n-2}+1$ at 330 300
    \pinlabel $a_{n-1}+1$ at 230 300
    \pinlabel $+1$ at 280 170
    \pinlabel $\dots$ at 415 220
    \pinlabel $a_1$ at 500 300
    \pinlabel $a_0$ at 560 300
    \pinlabel $\textcolor{red}{L_2}$ at 640 260
     \pinlabel $\textcolor{blue}{L_1}$ at 80 240
       \pinlabel $a_n$ at 125 160
    \endlabellist
    \includegraphics[width=0.6\linewidth]{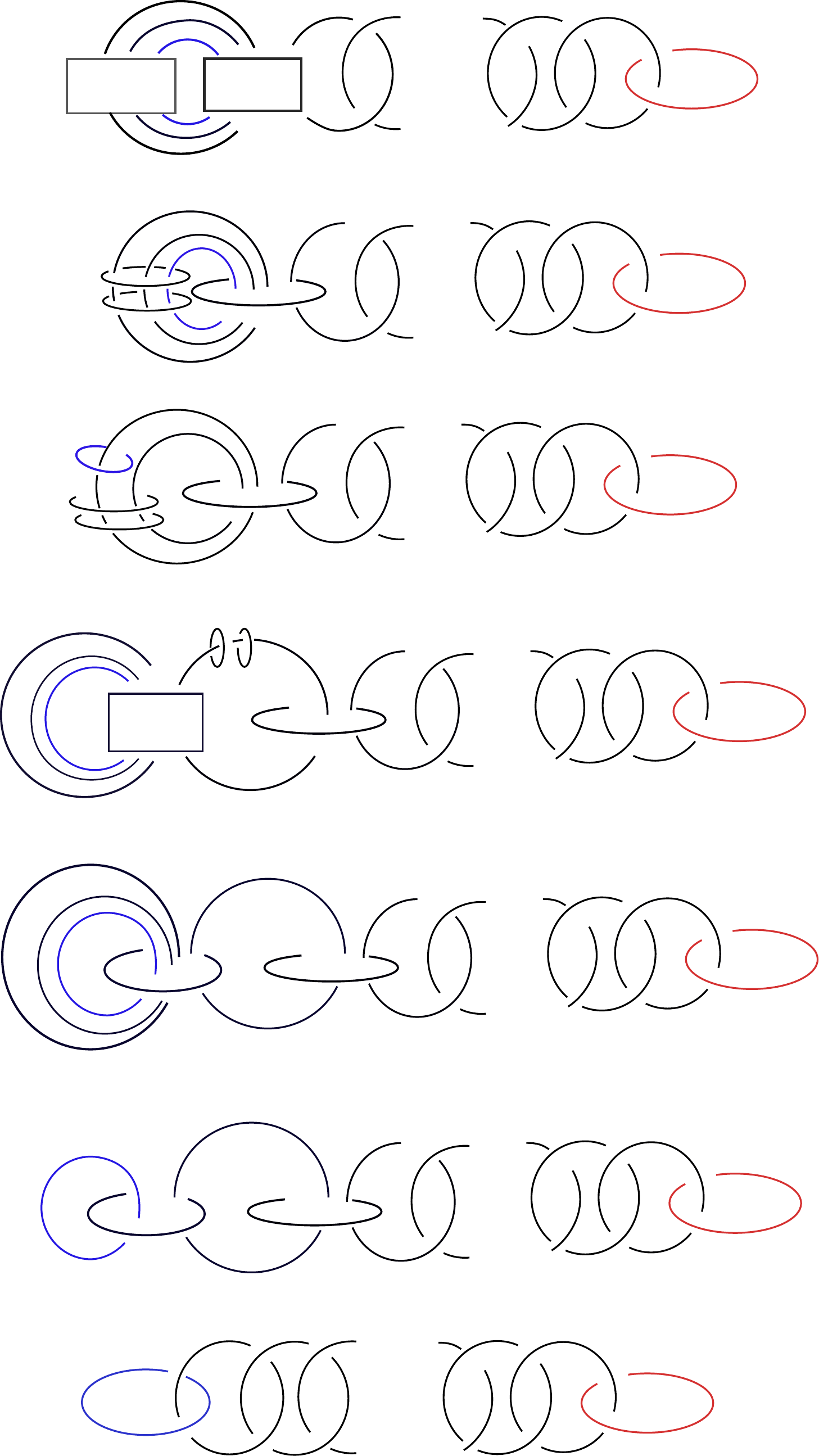}
    \caption{A sequence of Kirby moves for candidate Figure~\ref{fig:nonloose_loose_general} (a). }
    \label{fig:kirbymoves}
\end{figure}

This matrix turns $s_{k_2}=\frac{1}{k_2}$ to $-1$ and fixes $0$. First we do it for $k_1\geq 0, k_2=0$ thus we apply $1$ Rolfsen twist.  Appling $1$ Rolfsen twist turns $-\frac{p}{q}=[a_0,a_1,\cdots, a_n]$ to $[-1, a_0-1,a_1,\cdots, a_n]$. Now note that, a standard neighborhood of $K_1$ will have meridional slope $[-1, a_0-1,
a_2\cdots a_n]$ and dividing slope $s_{k_1}.$ We start with a max $\tb$ Legendrian unknot in $S^3$. It has standard neighborhood with meridional slope $\infty$ and dividing slope $-1$. First we do $|a_0-1|$ $+1$ surgeries on it. This will change the meridional slope to $[-1, a_0-1]$. After this we take a Legendrian push-off of the surgery curve and do a $|a_1+1|$ stabilizations and finally a $-1$ surgery. This makes the new meridian $[-1,a_0-1, a_1]$ and the dividing curve $[-1,a_0-1, a_1+1]$. We keep doing a sequence of stablizations and $-1$ surgery as shown in Figure~\ref{fig:general_loose_non-loose} (a). A Legendrian push-off of the last surgery curve will have a standard neighborhood with meridional slope $[-1, a_0-1, a_1\cdots, a_n]$ and dividing slope $[-1,a_0-1, a_1,\cdots, a_n+1]$. Note that, this is the dividing slope corresponding to $k_1=0$. Thus we can stabilize this push-off $k_1$ times to get all the representaives of $K_1$. For $K_2$, note that, the meridian of the standard unknot has dividing slope $-1$ and meridional slope $0$ which are actually dividing slope $\frac{1}{k_2}$ and meridinonal slope $0$ before the Rolfsen twist. Thus we get our desired Hopf link. It is easy to see that $ L_2$ is non-loose as a sequence of $-1$ surgeries on $L_2$ will cancel the $+1$ surgeries and we will get a tight manifold in the complement. We have a total $|a_1||a_2+1|\cdots|a_n+1|(k_1+1)$ candidates in this case. 

For $n=1$, as mentioned the analysis will be slightly different. We just replace $|a_1+2|$ by $|a_1+1|$ in (a) of Figure~\ref{fig:general_loose_non-loose}.

\begin{figure}[!htbp]
    \centering
    \labellist
    \pinlabel $\textcolor{blue}{L_1}$ at 50 300
     \pinlabel $\textcolor{red}{L_2}$ at 300 50
  \pinlabel (a) at 170 -10
  \pinlabel (b) at 520 -10
   \pinlabel $\textcolor{blue}{L_1}$ at  400 400
    \pinlabel $\textcolor{red}{L_2}$ at 650 50
   \pinlabel (c) at 900 -10
    \pinlabel $\textcolor{blue}{L_1}$ at 800 800
     \pinlabel $\textcolor{red}{L_2}$ at 980 50
        \pinlabel $\vdots$ at 890 570
        \pinlabel $\vdots$ at 1020 500
    \tiny
    \pinlabel $[\frac{1}{|a_0-2|}]$ at 0 200
    \pinlabel $\Bigg\{$ at 30 130
    \pinlabel $|a_0-2|$ at -10 130
    \pinlabel $[+1]$ at 300 160
    \pinlabel $\vdots$ at 300 130
    \pinlabel $[+1]$ at 300 100
    \pinlabel $[\frac{1}{|a_2|}]$ at 400 300
    \pinlabel $[+1]$ at 390 100
    \pinlabel $[+1]$ at 390 140
    \pinlabel $\Bigg\}$ at 645 120
    \pinlabel $|a_0-1|$ at 690 120
    \pinlabel $|a_1|$ at 650 280
    \pinlabel $[\frac{1}{|a_n|}]$ at 770 670
    \pinlabel $|a_{n-1}+1|$ at 1020 670
    \pinlabel $[-1]$ at 770 570
    \pinlabel $|a_{n-2}+2|$ at 1020 570
    \pinlabel $[-1]$ at 770 400
   \pinlabel $|a_2+2|$ at 1020 400
   \pinlabel $[-1]$ at 770 250
   \pinlabel $|a_1+1|$ at 1020 230
   \pinlabel $\Bigg\}$ at 1000 120
   \pinlabel $[+1]$ at 770 100
   \pinlabel $[+1]$ at 770 140
   \pinlabel $\textcolor{red}{|a_0-1|}$ at 1050 120
   
     \endlabellist
    \includegraphics[width=0.75\linewidth]{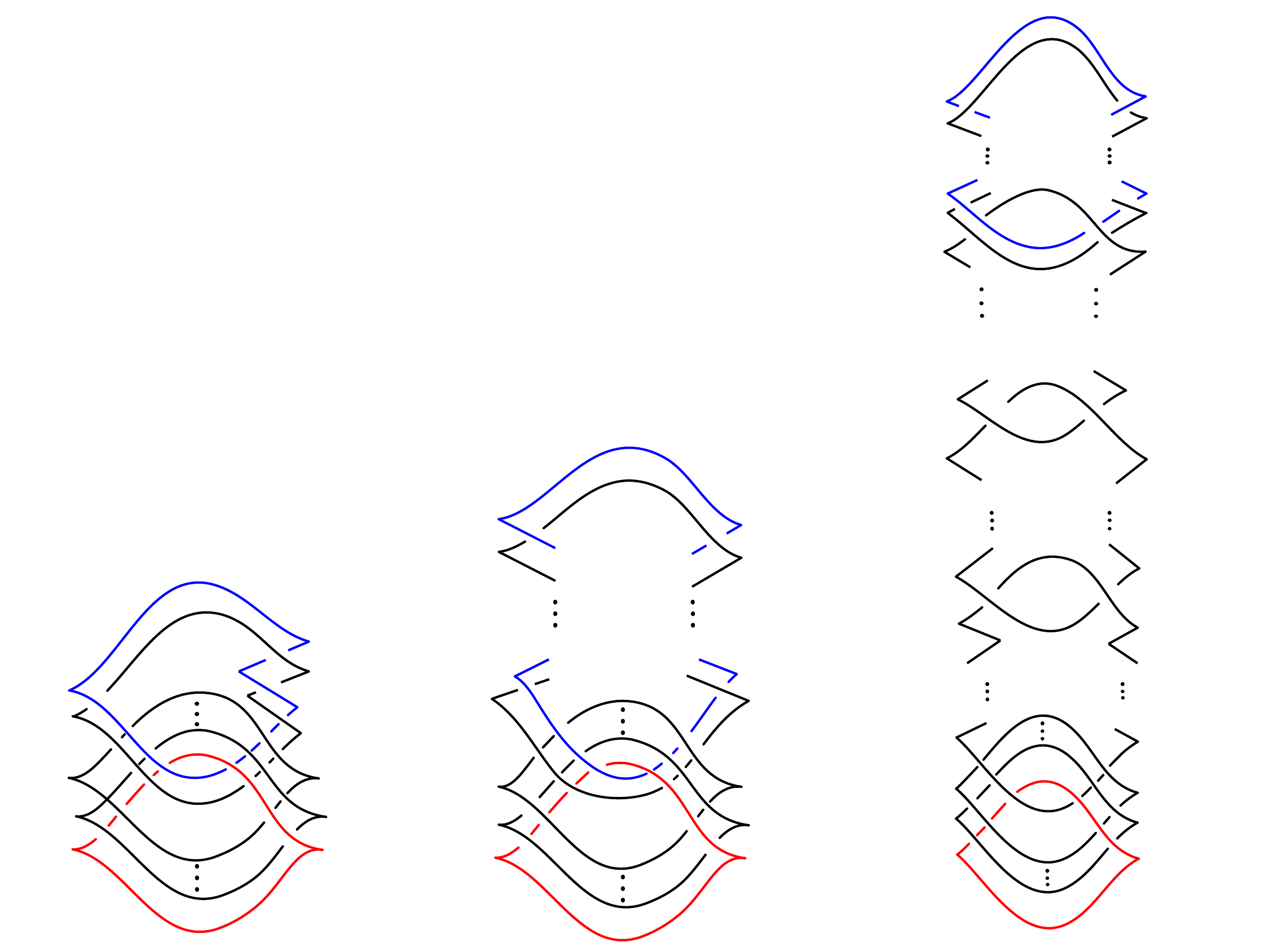}
    \caption{These are all non-loose Legendrian representatives in $L(p,q)$ with $L_1$ and $L_2$ both loose and $\tbr(L_1)=\frac{p'}{p}$ and $\tbr(L_2)=
    \frac{q}{p}$ . (a) For $n=1$ we have $2$ representatives. (b) For $n=2$ there are $|a_1-1|$ representatives and (c) For $n\geq 3$ we will have $|a_1||a_2+1|\cdots|a_{n-2}+1||a_{n-1}|$ non-loose representatives. }
    \label{fig:Loose_General1}
\end{figure}

\subsubsection*{For $k_2=1$} Here we need to apply a $2$ fold Rolfsen twist and the meridional slope in this case changes to $[-1,-2,a_0-1,a_1,\cdots, a_n]$. Like the previous case, we start with a standard unknot in $S^3$ with meridional slope $\infty$ and dividing slope $-1$. After 2 $+1$ surgeries the meridional slope becomes $[-1, -2]$. Now stabilizing a Legendrian push-off of this surgery curve $|a_0|$ times the dividing slope changes to $[-1,-2,a_0]$ and doing $-1$ surgery on this curve makes our new meridional slope $[-1,-2, a_0-1].$ We keep repeating this sequence by  stabilizing $|a_i+2|$ times and then a  $-1$ surgery until we reach the meridional slope $[-1,-2,a_0-1,a_1\cdots, a_n]$ and  dividing slope $[-1,-2,a_0-1,a_1, \cdots ,a_n+1]$. Then following the same procedure as before we get exactly $|a_0-1||a_1+1|\cdots |a_n+1| (k_1+1)$ non-loose candidates. Check (b) of Figure~\ref{fig:general_loose_non-loose}.

\subsubsection*{For $k_2\geq 2$} In this case, we need to apply a $k_2+1$ fold Rolfsen twist which brings our lower meridian to $[-1,\underbrace{-2,-2,\cdots -2}_{k_2}, a_0-1, a_1, \cdots, a_n]$. Doing $2$ consecutive $+1$ surgeries on the standard unknot changes our meridional slope to $[-1,-2].$ Next we take a Legendrian push-off of this surgery curve and stabilize it once and then do a $-1$ surgery. Our meridian now becomes $[-1,-2,-2]$ and dividing slope $0$.  Now we do a sequence of $-1$ surgeries of length $k_2-2$ on the chain of unknots that as shown (Note that, each of these unknots are actually a Legendrian push-off of the once stabilized surgery curve. One could see it via a contact handle slide.) After performing the $-1$ surgery on the chain our new meridional slope becomes $[-1,\underbrace{-2, -2,\cdots, -2}_{k_2}]$. Once we take a Legendrian push-off of the last surgery curve, a standard neighborhood of this has meridional slope  $[-1,\underbrace{-2, -2,\cdots, -2}_{k_2}]$ and dividing slope $0$. If now we do a $-1$ surgery on this curve after stabilizing it $|a_0+1|$ times we see the new meridian and dividing slopes are  $[-1,\underbrace{-2, -2,\cdots, -2}_{k_2}, a_0-1]$ and $[-1,\underbrace{-2, -2,\cdots, -2}_{k_2}, a_0]$ respectively. Then we proceed as before. Note that, in this case we will have $2(k_1+1)|a_0||a_1+1||a_2+1|\cdots|a_n+1|$ non-loose representatives.


\begin{figure}[!htbp]

\labellist
\small\hair 2pt
\pinlabel (a) at 200 440

\pinlabel $\textcolor{blue}{L_1}$ at 60 750
\pinlabel $\textcolor{red}{L_2}$ at 60 500
\pinlabel (b) at 200 -10
\pinlabel $\textcolor{red}{L_2}$ at 60 40
\pinlabel $\textcolor{blue}{L_1}$ at 60 350
\pinlabel (c) at 570 -10
\pinlabel $\vdots$ at 560 570
\pinlabel $\textcolor{red}{L_2}$ at 500 20
\pinlabel $\textcolor{blue}{L_1}$ at 450 720
\pinlabel $\vdots$ at 700 530
\pinlabel (d) at 970 -20
\pinlabel $\textcolor{red}{L_2}$ at 700 20
\pinlabel $\textcolor{blue}{L_1}$ at 650 720
\pinlabel $\vdots$ at 970 600
\pinlabel $\}$ at 1050 90
\pinlabel $\vdots$ at 960 340
\tiny
\pinlabel $[+1]$ at 40 570
\pinlabel $[+1]$ at 40 530
\pinlabel $\Bigg\}$ at 300 550
\pinlabel $|a_0-3|$ at 350 550
\pinlabel $[+1]$ at 40 650
\pinlabel $[+1]$ at 40 680
\pinlabel $[+1]$ at 40 300
\pinlabel $[+1]$ at 40 100
\pinlabel $[+1]$ at 40 140
\pinlabel $\Bigg\}$ at 310 120
\pinlabel $|a_0-1|$ at 360 120
\pinlabel $|a_1-1|$ at 360 270
\pinlabel $[+1]$ at 450 100
\pinlabel $[+1]$ at 450 130
\pinlabel $\Bigg\}$ at 650 110
\pinlabel $|a_0-1|$ at 700 110
\pinlabel $|a_1+1|$ at 700 220
\pinlabel $[-1]$ at 450 220
\pinlabel $|a_2+2|$ at 700 350
\pinlabel $[-1]$ at 450 350
\pinlabel $|a_{n-1}+2|$ at 700 570

\pinlabel $[-1]$ at 450 570
\pinlabel $|a_{n}|$ at 700 670
\pinlabel $[+1]$ at 450 670
\pinlabel $[+1]$ at 850 70
\pinlabel $[+1]$ at 850 100

\pinlabel $|a_0-1|$ at 1100 90

\pinlabel $|a_1+1|$ at 1100 180
\pinlabel $[-1]$ at 850 180
\pinlabel $|a_2+2|$ at 1100 270
\pinlabel $[-1]$ at 850 270

\pinlabel $|a_{n}+1|$ at 1100 450
\pinlabel $|a_{n-1}+2|$ at 1100 370
\pinlabel $\vdots$ at 1100 320
\pinlabel $[-1]$ at 850 450
\pinlabel $[-1]$ at 850 520
\pinlabel $[-1]$ at 850 670
\pinlabel $k_1-2$ at 1120 590

\pinlabel $[+1]$ at 850 750

\endlabellist
    \centering
    \includegraphics[width=0.85\linewidth]{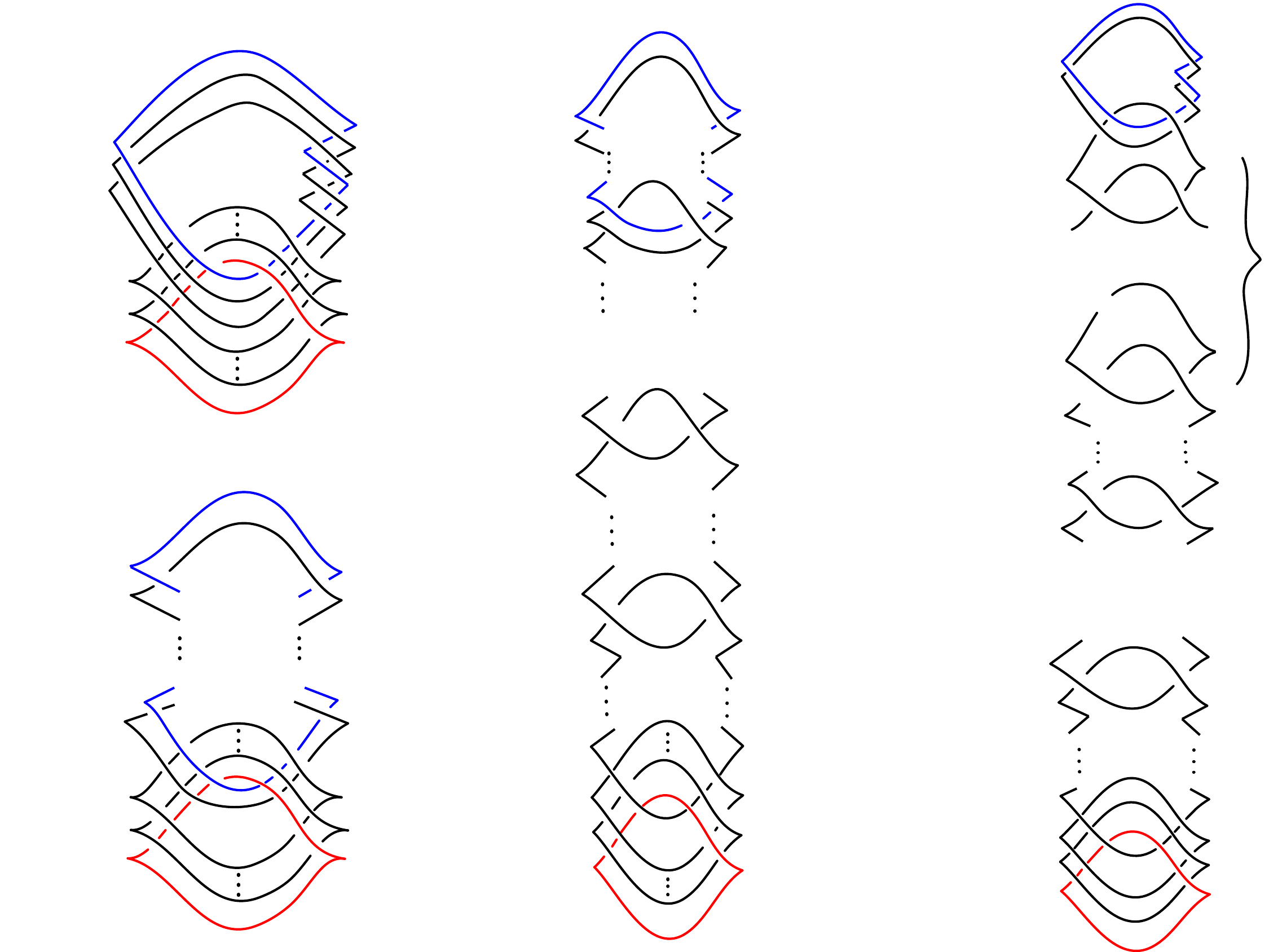}
    \caption{ All the non-loose Legendrian representatives of the Hopf link in $L(p,q)=[a_0,a_1,\cdots, a_n]$ with $\tbr(L_1)=k_1+\frac{p'}{q} \text{and}\tbr(L_2)=\frac{q}{p}$.\\ (a) $n=0$, $k_1=1$, (b) $n=1$, $k_1=1$ (c) $n\geq 2$, $k_1=1$ (d) $n\geq 0$,$k_1\geq 2$. }
    \label{fig:Loose_General2}
\end{figure}

\subsection*{large $\cup$ small slope} \label{subsec: large_small_algo} In this case we construct the diagrams for $k_1\geq 0$ and $k_2\geq 1$. (Note that, $k_1\geq 0\ \text{and}\ k_2=0$ overleaps with Case \ref{subsec:large_large_algo}. ) 

\subsubsection*{For $k_1=0$, $k_2\geq 1$} As usual we start with a standard unknot of slope $-1$ and stabilize it $|a_0+2|$ times. Note that, this is a standard neighborhood of an unknot with dividing slope $a_0+1$. A $-1$ surgery on a Legendrian divide on this neighborhood will change the meridian to $a_0$. Next we will do a series of stabilization and Legendrian surgery (as shown in Figure \ref{fig:nonloose_loose_general}) to bring us to slope $r$ and meridional slope $[a_0, \cdots a_{n-2}]$ . Once we are at dividing slope $r$ and meridional slope $[a_0, \cdots a_{n-2}]$, we do a $|a_{n-1}+1|$ stabilization and finally $|a_n|$ number of $+1$ surgeries to get our desired neighborhood of a Legendrian with dividing slope $-\frac{p''}{q}$ and meridinal slope $-\frac{p}{q}$. A Legendrian divide of this neighborhood is a component of the desired Hopf link.  Note that, the meridian of the standard unknot we started with is the other component with slope $-1$. Stabilizing this unknot $(k_2-1)$ number of times will give us all the non-loose Hopf links with $k_1=0$ and $k_2\geq 1$. One could see there are $|a_0+1||a_1+1|\cdots|a_{n-1}|k_2$ such candidates as desired. To see $K_1$ is non-loose, do a $-\frac{1}{a_n}$ surgery. This cancels all the $+1$ surgeries and thus the complement of $K_1$ is clearly tight( in fact Stein fillable, thus there cannot exist any Giroux torsion). To see this is indeed a Hopf link, one could do a series of Kirby moves and handle slides as shown in Figure ~\ref{fig:kirbymoves}.

\subsubsection*{For $k_1=1, k_2\geq 1$} Like the previous case, we will reach the dividing slope  $r$ and meridional slope $[a_0, a_1,\cdots, a_{n-2}]$. Next we will stabilize a Legendrian divide of this neighborhood $|a_{n-1}+2|$ number of times and do a $-1$ surgery on that. This takes our meridional slope to $[a_0, a_1, \cdots a_{n-1}]$. Next we stabilize $|a_n|$ number of times and finally a $+1$ surgery to get to the desired meridional and dividing slope. A Legendrian divide (Legendrian pushoff) of this neighborhood is the component of the non-loose Hopf link with $\tb=1+\frac{p''}{q''}$. Clearly this component is non-loose as $-1$ surgery on $K_1$ produces a tight manifold. Like the previous case, we can find the other component which is loose. There are $|a_0+1|a_1+1|\cdots a_{n-1}+1||a_n-1|k_2$ candidates as desired.

\subsubsection*{$k_1>1, k_2\geq 1$} We will approach exactly like before to reach dividing slope $s$ and meridional slope $[a_0, a_1,\cdots a_{n-1}]$. Now we do $|a_n+1|$ stabilizations to get dividing slope $-\frac {p}{q}.$ Next doing $k_1-1$ Legendrian surgery will bring us to meridional slope $[a_0, a_1, \cdots a_{n}-k_1+1]$. Finally another stabilization and a $+1$ surgery give us the desired neighborhood. Note that, like before a Legendrian divide on this neighborhood is the non-loose $K_1$. There are $2k_2|a_0+1||a_1+1|\cdots |a_n|$ non-loose representatives as desired.

\subsection*{ large $\cup$ large slope} \label{subsec:large_large_algo} Note that, like Case~\ref{subsec:small_large_algo} here too we have an inconsistent twisting and thus we need to apply the $k_2+1$ fold Rolfsen twist trick. We also see that depending on $n=1$ and $n>1$ there will be cases significantly different as we saw in the proof of Theorem \ref{thm:general}. The surgery pictures for each of the cases are given in Figure ~\ref{fig:Loose_General1}, ~\ref{fig:Loose_General2}, ~\ref{fig:Loose_General3} and ~\ref{fig:Loose_General4}. One could do a similar analysis as the previous cases about the slopes. Thus we omit the details here.

\subsection{$k_2=0$} We subdivide this case into various subcases. Note that, we are applying $1$ Rolfsen twist to the slopes here and thus $-\frac{p}{q}=[a_0,a_1,\cdots, a_n]$ changes into $[-1, a_0-1, a_1,\cdots, a_n]$.

\subsubsection{$k_1=0=k_2$} 
\begin{enumerate}
    \item $n=0$ In this case, we will have a unique non-loose representative in $L(a_0,1)$ with both components non-loose. The surgery diagram is given in 
\cite{chatterjee2025_Hopf}. Just replace $p$ by $a_0$.

\item $n=1$. We start with a standard unknot as before with $-1$ dividing slope and $\infty$ meridional slope. We do $|a_0-2|$ $+1$ surgeries first. That take us to meridional slope $[-1, a_0-2]$. Now we do $1$ stabilization on a Legendrian push-off of the last surgery curve that changes the dividing slope to $[-1, a_0-1]$. And finally we do a sequence of $+1$ surgeries of length $|a_1-1|$ which changes the meridional slope to $[-1,a_0-1, a_1]$ which is our required slope (Note that, we applied a Rofsen twist here). We have $2$ such candidates as shown in Figure~\ref{fig:Loose_General1} (a).

\item $n=2$. In this case, first we do $|a_0-1|$ $+1$ surgeries to the standard unknot that changes the meridional slope to $[-1, a_0-1]$, then $|a_1|$ stabilization of it's push-off and finally end with another $|a_2|,$ $+1$ surgeries.  This gives us $|a_1-1|$ candidate as shown in Figure~\ref{fig:Loose_General1} (b).

\item $n\geq 3$. For the general case, we do a sequence of stabilizations and surgeries as shown in (c) of Figure~\ref{fig:Loose_General1}. These are all $|a_1||a_2+1|\cdots|a_{n-2}+1||a_{n-1}|$ representatives.

\end{enumerate}


\subsubsection{$k_1=1, k_2=0$}
\begin{enumerate}
    \item $n=0$. This is the $L(a_0,1)$ case. Note that, this particular contact surgery diagram was missing in \cite{chatterjee2025_Hopf}. So we add it here. See (a) of Figure~\ref{fig:Loose_General2}. We have $2$ such representatives.
   \item $n=1$. We have $|a_1-2|$ cases which are coming from the $|a_1-1|$ choice of stabilizations. See (b) of Figure~\ref{fig:Loose_General2}.
    \item $n\geq 2$ This is the general case and we have $|a_1||a_2+1|\cdots |a_{n-1}+1||a_n-1|$ representatives coming from the choices of stabilizations in (c) of Figure~\ref{fig:Loose_General2}.
\end{enumerate}

\subsubsection{$k_1\geq 2, k_2=0$} (d) of Figure~\ref{fig:Loose_General2} gives all $2|a_1||a_2+1|\cdots |a_n|$ representatives.

\begin{figure}[!htbp]
    \centering
    \labellist
    \pinlabel (a) at 140 350
    \pinlabel (b) at 440 -15
     \pinlabel $\vdots$ at 430 570
     \pinlabel $\vdots$ at 320 550
     \pinlabel $\vdots$ at 530 550
     \pinlabel (c) at 710 -15
     \pinlabel $\vdots$ at 700 550
     \pinlabel $\vdots$ at 830 550
     \pinlabel (d) at 970 -18
     \pinlabel $\vdots$ at 1100 300
     \pinlabel $\vdots$ at 970 320
    \tiny
    \pinlabel $\textcolor{blue}{L_1}$ at 80 800
    \pinlabel $[+\frac{1}{|a_1|}]$ at 0 700
    \pinlabel $|a_0-1|$ at 0 630
    \pinlabel $[+1]$ at 0 520
    \pinlabel $[+1]$ at 0 490
    \pinlabel $\textcolor{red}{L_2}$ at 0 450
    \pinlabel $\textcolor{blue}{L_1}$ at 380 800
    \pinlabel $[+\frac{1}{|a_n|}]$ at 550 700
    \pinlabel $|a_{n-1}+1|$ at 320 660
    \pinlabel $[-1]$ at 530 590
    
    \pinlabel $|a_{n-2}+2|$ at 320 590
    \pinlabel $|a_1+2|$ at 320 390
    \pinlabel $[-1]$ at 550 390
    \pinlabel $|a_0|$ at 330 210
    \pinlabel $[-1]$ at 550 210
    \pinlabel $[+1]$ at 550 130
    \pinlabel $[+1]$ at 550 100
    \pinlabel $\textcolor{red}{L_2}$ at 350 70
    \pinlabel $\textcolor{blue}{L_1}$ at 650 800
    \pinlabel $[+1]$ at 600 670
    \pinlabel $|a_n|$ at 800 670
    \pinlabel $|a_{n-1}+2|$ at 830 600
    \pinlabel $[-1]$ at 600 560
    \pinlabel $|a_1+2|$ at 810 350
    \pinlabel $|a_0|$ at 810 200
    \pinlabel $[+1]$ at 810 120
    \pinlabel $[+1]$ at 810 100
    \pinlabel $[-1]$ at 610 210
     \pinlabel $[-1]$ at 610 390
    \pinlabel $\textcolor{red}{L_2}$ at 630 70
      \pinlabel $\textcolor{blue}{L_1}$ at 900 800
    \pinlabel $[+1]$ at 1080 740
    \pinlabel $[-1]$ at 880 670
    \pinlabel $[-1]$ at 880 270
    \pinlabel $[-1]$ at 880 420
     \pinlabel $[-1]$ at 885 210
    \pinlabel $k_1-2$ at 1120 590
    \pinlabel $|a_{n}+1|$ at 1120 430
    \pinlabel $[-1]$ at 900 530
    \pinlabel $|a_{n-1}+2|$ at 1120 350
    \pinlabel $|a_0|$ at 1130 200
    \pinlabel $[+1]$ at 1120 90
    \pinlabel $[+1]$ at 1120 70
    \pinlabel $\textcolor{red}{L_2}$ at 880 50

    \endlabellist
    \includegraphics[scale=0.35]{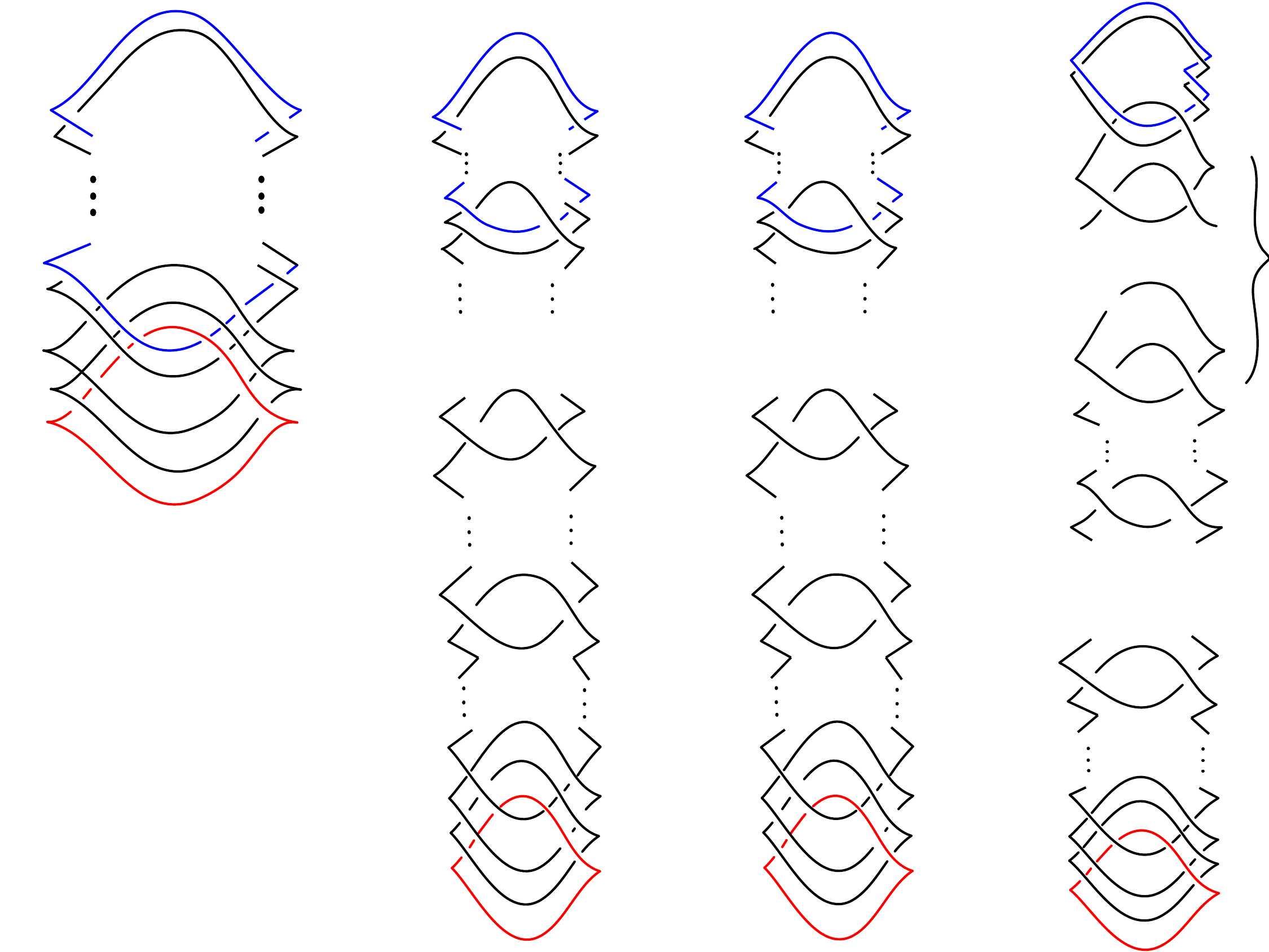}
\caption{The non-loose Hopf links in $L(p,q)=[a_0,a_1,\cdots, a_n]$ where $L_0,L_1$ are loose with $\tbr(L_1)=k_1+\frac{p'}{p}$ and $\tbr(L_2)=1+\frac{q}{p}$.\\ (a) $ k_1=0, \ n=1.$ (b) $ k_1=0, \ n\geq 2$. (c) $k_1=1$,\ $n\geq 0.$ (d) $k_1\geq 2,\ n\geq 0$.}
    \label{fig:Loose_General3}
\end{figure}

\begin{figure}[!htbp]
    \centering
    \labellist
    \pinlabel (a) at 125 -15
    \pinlabel $\vdots$ at 200 500
    \pinlabel $\vdots$ at 125 500
       \pinlabel (b) at 515 -15
     \pinlabel $\vdots$ at 600 500
    \pinlabel $\vdots$ at 515 500
      \pinlabel (c) at 895 -15
    \pinlabel $\vdots$ at 990 450
    \pinlabel $\vdots$ at 895 440
    \tiny
        \pinlabel $\textcolor{blue}{L_2}$ at 70 780
    \pinlabel $[\frac{1}{|a_n|}]$ at 50 750 
    \pinlabel $|a_{n-1}+1|$ at 200 750
    \pinlabel $[-1]$ at 50 660
     \pinlabel $|a_{n-2}+2|$ at 200 660
    \pinlabel $|a_{n-3}+2|$ at 200 580
    \pinlabel $[-1]$ at 50 580
     \pinlabel $|a_1+2|$ at 200 360
     \pinlabel $|a_0+1|$ at 200 280
     \pinlabel $k_2-2$ at 215 180
     \pinlabel $[-1]$ at 50 360
     \pinlabel $[-1]$ at 50 280
     \pinlabel $[-1]$ at 50 230
      \pinlabel $[-1]$ at 50 120
       \pinlabel $[-1]$ at 50 150
       \pinlabel $[-1]$ at 50 80
        \pinlabel $[+1]$ at 50 40
         \pinlabel $[+1]$ at 50 60
         \pinlabel $\textcolor{red}{L_2}$ at 50 20
          \pinlabel $\textcolor{blue}{L_2}$ at 470 780
    \pinlabel $[+1]$ at 450 750 
    \pinlabel $|a_{n}|$ at 600 750
    \pinlabel $[-1]$ at 450 660
     \pinlabel $|a_{n-1}+2|$ at 600 680
    \pinlabel $|a_{n-2}+2|$ at 600 590
    \pinlabel $[-1]$ at 450 580
     \pinlabel $|a_1+2|$ at 600 360
     \pinlabel $|a_0+1|$ at 600 280
     \pinlabel $k_2-2$ at 610 185
     \pinlabel $[-1]$ at 450 360
     \pinlabel $[-1]$ at 450 280
     \pinlabel $[-1]$ at 450 230
      \pinlabel $[-1]$ at 450 120
       \pinlabel $[-1]$ at 450 150
       \pinlabel $[-1]$ at 450 85
        \pinlabel $[+1]$ at 450 45
         \pinlabel $[+1]$ at 450 65
         \pinlabel $\textcolor{red}{L_2}$ at 450 20
              \pinlabel $\textcolor{blue}{L_2}$ at 850 800
               \pinlabel $[+1]$ at 820 770 
               \pinlabel $k_2-2$ at 990 680
               \pinlabel $|a_n+1|$ at 990 580
               \pinlabel $|a_{n-1}+2|$ at 990 500
                \pinlabel $|a_1+2|$ at 990 360
               \pinlabel $|a_0+1|$ at 990 280
                 \pinlabel $[-1]$ at 820 745 
                  \pinlabel $[-1]$ at 820 720
                   \pinlabel $[-1]$ at 820 630
                   \pinlabel $[-1]$ at 820 600
                   \pinlabel $[-1]$ at 820 550
                    \pinlabel $[-1]$ at 820 500
                     \pinlabel $[-1]$ at 820 450
                    \pinlabel $[-1]$ at 820 370
                    \pinlabel $[-1]$ at 820 300
                    \pinlabel $[-1]$ at 820 235
                    \pinlabel $[-1]$ at 820 150
                    \pinlabel $[-1]$ at 820 130
                  \pinlabel $k_2-2$ at 990 185
                    \pinlabel $[-1]$ at 820 90
                    \pinlabel $[+1]$ at 820 60
                    \pinlabel $[+1]$ at 820 45
                    \pinlabel $\textcolor{red}{L_2}$ at 820 30
    \endlabellist
    \includegraphics[scale= 0.4]{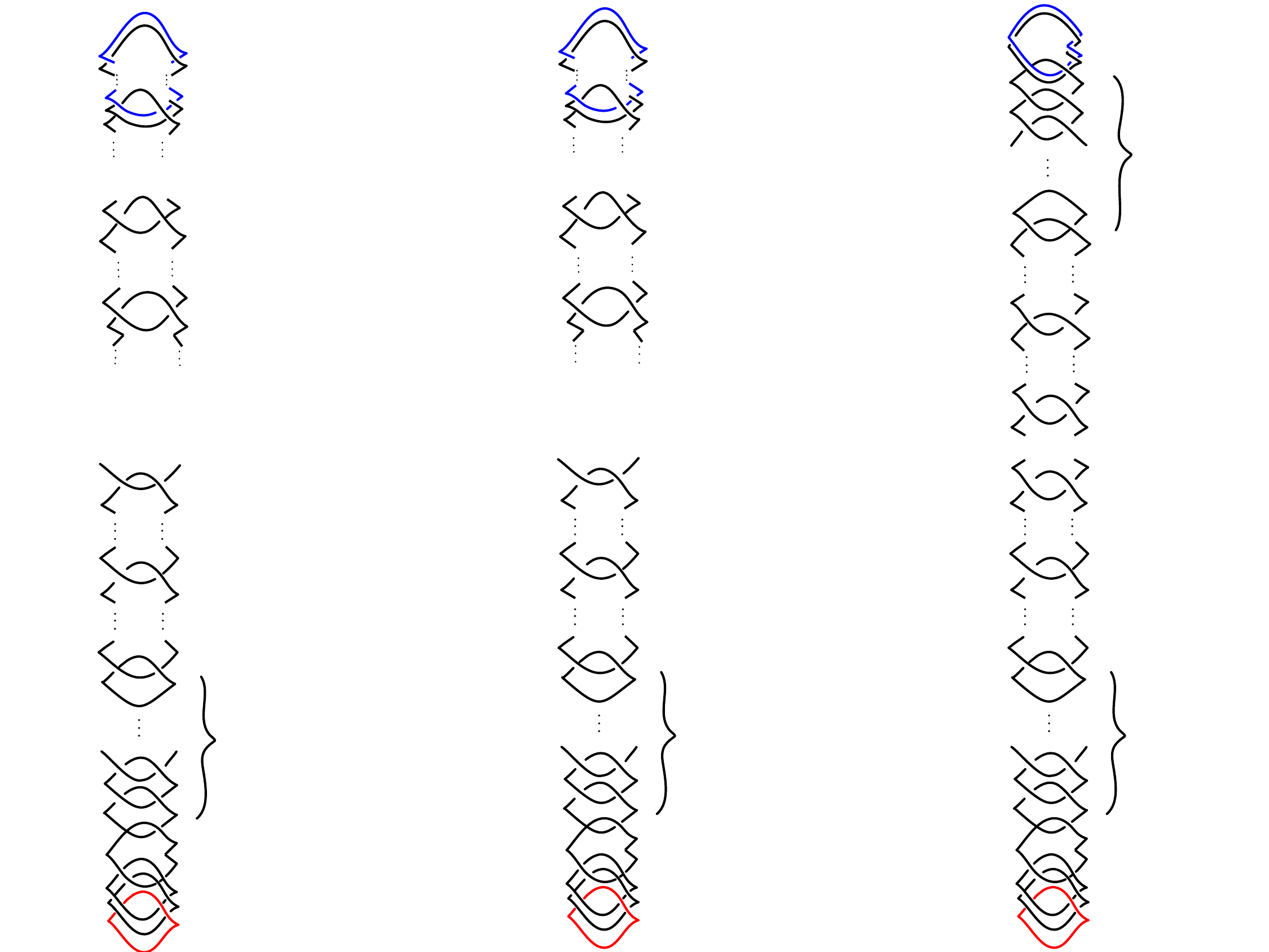}
    \caption{Non-loose Hopf links in $L(p,q)$ with both $L_1, L_2$ loose. \\$\tbr(L_1)=k_1+\frac{p'}{p}$ and $\tbr(L_2)=k_2+\frac{q}{p}$ with $k_2\geq 2$. (a) $k_1=0$, (b) $k_1=1$ and (c) $k_1\geq 2.$ In all the cases $n\geq 0.$}
    \label{fig:Loose_General4}
\end{figure}


\subsection{$k_2=1$} For this case, we have the following subcases. 

\subsubsection{$k_1= 0, k_2=1$}
\begin{enumerate}
    \item $n=1$ This is (a) of Figure \ref{fig:Loose_General3}. There are $|a_0-2|$ representatives.
    \item $n\geq 2$ For general $n$ we have $|a_0-1||a_1+1|\cdots|a_{n-1}|$ non-loose representatives as shown in (b) of Figure~\ref{fig:Loose_General3}.
\end{enumerate}

\subsubsection{$k_1=k_2=1$} Check (c) of Figure~\ref{fig:Loose_General3} for the $|a_0-1||a_1+1|\cdots|a_n-1|$ representatives.
\subsubsection {$k_1\geq 2, k_2=1$} (d) of Figure~\ref{fig:Loose_General3} gives $2|a_0-1||a_1+1|\cdots|a_{n-1}+1||a_n|$ representatives.

\subsection{$k_2\geq 2$}For the final case, we have the three subcases. The Legendrian representaives are given in Figure~\ref{fig:Loose_General4}. With these we get all the explicit Legendrian realizations of non-loose Hopf links in $L(p,q)$.
To check that all these diagrams are indeed Hopf links one needs to do a sequence of handle slides and Kirby moves. We do not include it here are they are repetitive sequences. 


\newpage

\bibliographystyle{alpha}
\bibliography{references}

\end{document}